\definecolor{darkblue}{rgb}{0.0, 0.0, 0.8}
\definecolor{darkred}{rgb}{0.8, 0.0, 0.0}
\definecolor{darkgreen}{rgb}{0.6, 0.15, 0.15}
\theoremstyle{definition}
\newtheorem{theorem}{Theorem}[section]
\newtheorem{proposition}[theorem]{Proposition}
\newtheorem{corollary}[theorem]{Corollary}
\newtheorem{remark}[theorem]{Remark}
\newtheorem{notation}[theorem]{Notation}
\newtheorem{definition}[theorem]{Definition}
\newtheorem{lemma}[theorem]{Lemma}
\newtheorem{problem}[theorem]{Problem}
\newtheorem{setup}[]{Setup}
\newtheorem{example}[theorem]{Example}
\newcommand{\bE}{\mathbf{E}}
\newcommand{\bbI}{\mathbb{I}}
\newcommand{\Ccal}{\mathcal{C}}
\newcommand{\Zplus}{\mathbf{Z}_{\geq 0}}
\newcommand{\sub}{\mathrm{sub}}
\newcommand{\intnbd}{\mathrm{nbd}_{\mathbb{I}}}
\newcommand{\intrk}{\mathrm{rk}_{\mathbb{I}}}
\newcommand{\intdgm}{\mathrm{dgm}_{\mathbb{I}}}
\newcommand{\bv}{\mathbf{v}}
\newcommand{\bw}{\mathbf{w}}
\newcommand{\ZZ}{\mathbf{ZZ}}
\newcommand{\overmax}{\max_{\ZZ} (I)}
\newcommand{\overmin}{\min_{\ZZ} (I)}
\newcommand{\nbd}{\mathrm{nbd}}
\newcommand{\rank}{\mathrm{rank}}
\newcommand{\Int}{\mathbf{Int}}
\newcommand{\A}{\mathcal{A}}
\newcommand{\I}{\mathbb{I}}
\newcommand{\id}{\mathrm{id}}
\newcommand{\Pb}{P}
\newcommand{\Q}{Q}
\newcommand{\vect}{\mathbf{vec}}
\newcommand{\F}{\mathbb{F}}
\newcommand{\Fcal}{\mathcal{F}}
\newcommand{\NN}{t}
\newcommand{\Z}{\mathbf{Z}}
\newcommand{\abs}[1]{\left\lvert{#1}\right\rvert}
\newcommand{\rk}{\mathrm{rk}}
\newcommand{\barc}{\mathrm{barc}}
\newcommand{\C}{\mathcal{C}}
\newcommand{\RNum}[1]{\uppercase\expandafter{\romannumeral #1\relax}}
\title{Computing Generalized Rank Invariant for 2-Parameter Persistence Modules via Zigzag Persistence and its Applications}
\author[1]{Tamal K. Dey}
\author[2]{Woojin Kim}
\author[3]{Facundo M\'emoli}
\affil[1]{Department of Computer Science, Purdue University\thanks{\texttt{tamaldey@purdue.edu}}}
\affil[2]{Department of Mathematics, Duke University\thanks{\texttt{woojin@math.duke.edu}}}
\affil[3]{Department of Mathematics and Department of Computer Science and Engineering, The Ohio State University\thanks{\texttt{memoli@math.osu.edu}}}
\begin{document}

\maketitle

\begin{abstract}
The notion of generalized rank invariant in the context of multiparameter persistence 
has become an important ingredient for defining interesting
homological structures such as generalized persistence
diagrams. Naturally, computing these rank invariants efficiently is a
prelude to computing any of these derived structures efficiently.
We show that the generalized rank over a finite interval $I$ of a $\Z^2$-indexed persistence
module $M$ is equal to the generalized rank of the zigzag module that is 
induced on a certain path in $I$ tracing mostly its
boundary. Hence, we can compute the generalized
rank over $I$ by computing the barcode of the zigzag module obtained
by restricting the bifiltration inducing $M$ to that path. 
If the bifiltration and $I$ have at most $\NN$ simplices and points respectively,
this computation takes $O(\NN^\omega)$ time where $\omega\in[2,2.373)$ is the exponent
of matrix multiplication.
Among others, we apply this result to obtain an
improved algorithm for the following problem. Given a bifiltration
inducing a module $M$,
determine whether $M$ is interval decomposable and, if so,
compute all intervals supporting its summands.
\end{abstract}

\section{Introduction}\label{sec:introdction}

In Topological Data Analysis (TDA) one of the central tasks is that of decomposing persistence modules into direct sums of \emph{indecomposables}. In the case of a persistence module $M$ over the integers $\Z$, the indecomposables are interval modules, which implies that $M$ is isomorphic to a direct sum of \emph{interval} modules $\mathbb{I}([b_\alpha,d_\alpha])$, for integers   $b_\alpha\leq d_\alpha$  and $\alpha$ in some index set $A$. This   follows from a classification theorem for quiver representations established by Pierre Gabriel in the 1970s. The multiset of intervals $\{[b_\alpha,d_\alpha],\,\alpha\in A\}$ that appear in this decomposition constitutes the \emph{persistence diagram}, or equivalently, the \emph{barcode} of $M$ --- a central object in TDA~\cite{DW22,EH2010}. 

There are many situations in which data naturally induce persistence modules over posets which are different from $\Z$  \cite{bauer2020cotorsion,carlsson2010zigzag,carlsson2010multiparameter,carlsson2009theory,DH21,escolar2016persistence,kim2021spatiotemporal,lesnick2012multidimensional,lesnick2015theory,miller2019modules}. Unfortunately, the situation already becomes `wild' 
when the domain poset is $\Z^2$.
In that situation, one must contend with the fact that a direct analogue of the notion of persistence diagrams may not exist \cite{carlsson2009theory}, namely it may not be possible to obtain a lossless up-to-isomorphism representation of the module as a direct sum of interval modules. 

Much energy has been put into finding ways in which one can extract incomplete but still stable  invariants from  persistence modules $M:\Z^d\rightarrow \vect$ (which we will refer to as a $\Z^d$-module).    Biasotti et al. \cite{biasotti2008multidimensional} proposed considering the restriction of a $\Z^d$-module to lines with positive slope. This was further developed by Lesnick and Wright in the RIVET project \cite{lesnick2015interactive} which facilitates the interactive visualization of $\Z^2$-modules. Cai et al. \cite{cai2020elder}  considered a certain elder-rule on the $\Z^2$-modules which arise in multiparameter clustering. Other  efforts have    identified algebraic conditions which can guarantee that $M$  can be decomposed into interval modules of varying degrees of complexity (e.g.  rectangle modules etc) \cite{botnan2020rectangle,cochoy2020decomposition}.
\begin{figure}[htbp]
    \centering
    \includegraphics[width=1\textwidth]{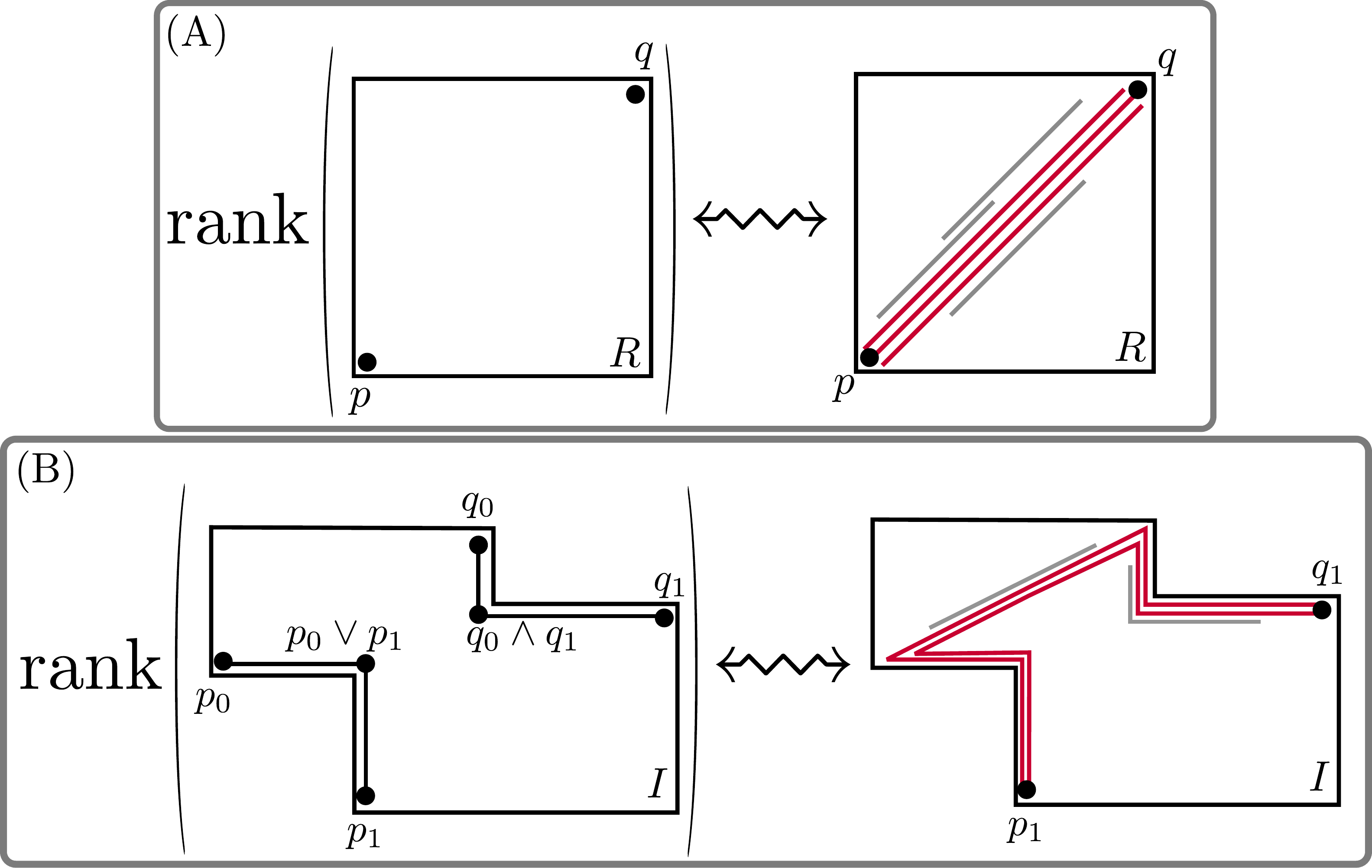}
     \caption{\textbf{Generalized rank via zigzag persistence.} Let $M$ be a $\Z^2$-module. (A) Standard rank: Let $p\leq q$ in $\Z^2$. The rank of the structure map from $p$ to $q$ coincides with the multiplicity of full bars (red) over the diagonal path, which is three. (B) Generalized rank: Let $I$ be an interval of $\Z^2$. Let us consider the zigzag poset $\partial I:p_1\leq (p_0\vee p_1) \geq p_0\leq  q_0 \geq (q_0\wedge q_1) \leq q_1$. The generalized rank of $M$ over $I$ is equal to the multiplicity of full bars (red) in the zigzag module $M_{\partial I}$, which is two (Theorem \ref{thm:rank is the multiplicity of full intervals}) (note: by definition, the zigzag poset $\partial I$ does not fully inherit the partial order on $\Z^2$. For example, the partial order on $\partial I$ does not contain the pair $(p_1,q_1)$ whereas $p_1\leq q_1$ in $\Z^2$).}
    \label{fig:intro}
\end{figure}

A distinct thread has been proposed by  Patel in \cite{patel2018generalized} through the reinterpretation of the persistence diagram of a $\Z$-module as the  M\"obius inversion of its rank function. Patel's work was then extended by Kim and M\'emoli \cite{kim2018generalized} to the setting of modules defined over any suitable locally finite poset. They generalized  the rank invariant via the \emph{limit-to-colimit map} over subposets and then conveniently expressed its M\"obius inversion. In fact the limit-to-colimit map was suggested by Amit Patel to the authors of \cite{kim2018generalized} who in \cite{kim2018generalized_v1} used it to define a notion of rank invariant for zigzag modules.  Chambers and Letscher \cite{chambers2018persistent} also  considered a notion of persistent homology over directed acyclic graphs using the limit-to-colimit map. Asashiba et al. \cite{asashiba2019approximation} study  the case of modules defined on an $m\times n$ grid and propose a high-level algorithm for computing both their generalized rank function and their M\"obius inversions with the goal of providing an approximation of a given module by interval decomposables. Asashiba et al.  \cite{asashiba2018interval} tackle the interval decomposability of a given $\Z^d$-module via quiver representation theory.

One fundamental algorithmic problem is that of determining whether a given $\Z^2$-module is interval decomposable, and if so, computing the intervals. 
There are some existing solutions to this problem in the literature.
Suppose that the input $\Z^2$-module is induced by a bifiltration comprising at most $t$ simplices on a grid of cardinality $O(t)$. 
First, the decomposition algorithm by Dey and Xin~\cite{dey2019generalized} can produce all
indecomposables from such a module in $O(t^{2\omega+1})$ time (see \cite{kerber2020multi} for comments about its implementation) where $\omega\in [2,2.373)$ is the exponent of matrix multiplication. Given these indecomposables, one could then test whether they are indeed interval modules. However, the algorithm requires that the input module be such that no two generators or relations
in the module have the same grade. Then, Asashiba et al.~\cite{asashiba2018interval}
give an algorithm which requires
enumerating an exponential number (in $t$) of intervals. Finally,
the algorithm by Meataxe sidesteps both of the above issues, but incurs a worst-case
cost of $O(t^{18})$ as explained in~\cite{dey2019generalized}.

\medskip
See also \cite{betthauser2021graded,botnan2021signed,bubenik2020virtual,kim2021bigraded,mccleary2020edit} for related recent work.

\paragraph{Contributions.}

One of our key results is the following. We prove that for an interval $I$ in $\Z^2$ we can compute the generalized rank invariant $\rk(M)(I)$ of a $\Z^2$-module $M$ through the computation of the zigzag persistence barcode of the \emph{restriction} of $M$ to the \emph{boundary cap} of $I$, which is a certain zigzag path in $I$; see Figure \ref{fig:intro} for an illustration.

\medskip These are our main results assuming that the input is a bifiltration with $O(t)$ simplices:

\begin{enumerate}
    \item We reduce the problem of computing the generalized rank invariant of a $\Z^2$-module to computing zigzag persistence (Theorem \ref{thm:rank is the multiplicity of full intervals}).

    \item We provide an algorithm {\sc Interval} (page \pageref{algorithm:interval}) to compute the barcode of any finite interval decomposable $\Z^2$-module in time $O(t^{\omega+2})$ (Proposition \ref{prop:timeInterval}). 
    
    \item We provide an algorithm {\sc IsIntervalDecomp} (page \pageref{algorithm:isintervaldecomp}) to decide the interval decomposability of a finite $\Z^2$-module in time $O(t^{3\omega+2})$ (Proposition \ref{prop:complexity for testing interval decomposability}). 
\end{enumerate}

\paragraph{Acknowledgements.} The authors thank the anonymous reviewers for constructive feedback and suggesting ideas that shortened the proof of Theorem \ref{thm:rank is the multiplicity of full intervals}. This work is supported by NSF grants CCF-2049010, CCF-1740761, DMS-1547357, and IIS-1901360.

\section{Preliminaries}\label{sec:preliminaries}

In \S\ref{sec:persistence modules}, we review the notion of interval decomposability of persistence modules. In \S\ref{sec:generalized rank invariant}, we review the notions of generalized rank invariant and generalized persistence diagram. In \S\ref{sec:canonical limits and colimits}, we discuss how to compute the limit and the colimit of a given functor $\Pb\rightarrow \vect$.

\subsection{Persistence Modules and their decompositions}\label{sec:persistence modules}

We fix a certain field $\F$ and every vector space in this paper is over $\F$. Let $\vect$ denote the category of \emph{finite dimensional} vector spaces  and linear maps over $\F$.

Let $\Pb$ be a poset. We regard $\Pb$ as the category that has points of $\Pb$ as objects. Also, for any $p,q\in \Pb$, there exists a unique morphism $p\rightarrow q$ if and only if $p\leq q$. For a positive integer $d$, let $\Z^d$ be given the partial order defined by $(a_1,a_2,\ldots,a_d)\leq (b_1,b_2,\ldots,b_d)$ if and only if $a_i\leq b_i$ for $i=1,2,\ldots,d$.  

A ($\Pb$-indexed) \textbf{persistence module} is any functor $M:\Pb\rightarrow \vect$ (which we will simply refer to as a $P$-module). In other words, to each $p\in \Pb$, a vector space $M_p$ is associated, and to each pair $p\leq q$ in $\Pb$, a linear map $\varphi_{M}(p,q):M_p\rightarrow M_q$ is associated. Importantly, whenever $p\leq q\leq r$ in $\Pb$, it must be that $\varphi_M(p,r)=\varphi_M(q,r)\circ \varphi_M(p,q)$. 

We say that a pair of $p,q\in \Pb$ is \textbf{comparable} if either $p\leq q$ or $q\leq p$. 

\begin{definition}[{\cite{botnan2018algebraic}}]\label{def:intervals}
An \textbf{interval} $I$ of $\Pb$ is a subset $I\subseteq \Pb$ such that: 	
\begin{enumerate*}[label=(\roman*)]
    \item $I$ is nonempty.    
    \item If $p,q\in I$ and $p\leq r\leq q$, then $r\in I$. \label{item:convexity}
    \item $I$ is \textbf{connected}, i.e. for any $p,q\in I$, there is a sequence $p=p_0,
		p_1,\cdots,p_\ell=q$ of elements of $I$ with $p_i$ and $p_{i+1}$ comparable for $0\leq i\leq \ell-1$.\label{item:interval3}
\end{enumerate*}
By $\Int(\Pb)$ we denote the set of all \emph{finite} intervals of $\Pb$. When $\Pb$ is finite and  connected, $P\in \Int(\Pb)$ will be referred to as  \textbf{the full interval}.
\end{definition}

For an interval $I$ of $\Pb$, the \textbf{interval module} $\I_I:\Pb\rightarrow \vect$ is defined as
\[\I_I(p)=\begin{cases}
\mathbb{F}&\mbox{if}\ p\in I,\\0
&\mbox{otherwise,} 
\end{cases}\hspace{20mm} \varphi_{\I_I}(p,q)=\begin{cases} \mathrm{id}_\mathbb{F}& \mbox{if} \,\,p,q\in I,\ p\leq q,\\ 0&\mbox{otherwise.}\end{cases}\]

Direct sums and quotients of $\Pb$-modules are defined pointwisely at each index $p\in \Pb$.

\begin{definition}\label{def:submodules and summands}Let $M$ be any $\Pb$-module. A \textbf{submodule} $N$ of $M$ is defined by subspaces $N_p\subseteq M_p$ such that $\varphi_M(p,q)(N_p)\subseteq N_q$ for all $p,q\in \Pb$ with $p\leq q$. These conditions guarantee that $N$ itself is a $\Pb$-module, with the structure maps given by the restrictions $\varphi_M(p,q)|_{N_p}$. In this case we write  $N\leq M$.

A submodule $N$ is a \textbf{summand} of $M$ if there exists a submodule $N'$ which is complementary to $N$, i.e. $M_p=N_p\oplus N'_p$ for all $p$. In that case, we say that $M$ is a \emph{direct sum} of $N,N'$ and write $M\cong N\oplus N'$.  Note that this direct sum is an \emph{internal} direct sum. 
\end{definition}

\begin{definition}\label{def:interval decomposable} A $\Pb$-module $M$ is called \emph{interval decomposable} if $M$ is isomorphic to a direct sum of interval modules, i.e. there exists an indexing set $\mathcal{J}$ such that $M\cong \bigoplus_{j\in \mathcal{J}}\I_{I_{j}}$ (external direct sum). In this case, the multiset $\{I_j: j\in \mathcal{J}\}$ is  called the \emph{barcode} of $M$, which will be denoted by $\barc(M)$.
\end{definition}
The Azumaya-Krull-Remak-Schmidt theorem guarantees that $\barc(M)$ is well-defined \cite{azumaya1950corrections}.
Consider a \textit{zigzag poset} of $n$ points, $    \bullet_{1}\leftrightarrow \bullet_{2} \leftrightarrow \ldots \bullet_{n-1} \leftrightarrow \bullet_n$
where $\leftrightarrow$ stands for either $\leq$ or $\geq$. A functor from a zigzag poset to $\vect$ is called a \textbf{zigzag module}  \cite{carlsson2010zigzag}. Any zigzag module is interval decomposable \cite{gabrielsthm} and thus admits a barcode.

The following proposition directly follows from the Azumaya-Krull-Remak-Schmidt theorem and will be useful  
in \S\ref{sec:computing interval summands and detecting interval decomposability}. 

\begin{proposition}\label{prop:quotienting does not change interver decomposabilty}
Let $M:\Pb\rightarrow \vect$ be interval decomposable and let $N\leq M$ is a summand of $M$ (Definition \ref{def:submodules and summands}). Then, $M/N$ is interval decomposable. 
\end{proposition}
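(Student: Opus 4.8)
The plan is to deduce this from the Azumaya--Krull--Remak--Schmidt theorem, used in its sharp form. The statement really amounts to the assertion that a direct summand of an interval decomposable module is again interval decomposable: since $N$ is a summand there is a complement $N'\leq M$ with $M_p=N_p\oplus N'_p$ for all $p$ (Definition~\ref{def:submodules and summands}), and the canonical projection $M\to M/N$ restricts to an isomorphism $N'\xrightarrow{\ \sim\ } M/N$ of $\Pb$-modules because pointwise $N'_p\cap N_p=0$ and $N'_p+N_p=M_p$. Thus it suffices to exhibit $M/N$ as a direct sum of interval modules, and I will do this by complementing $N$ itself inside $M$ by a sub-collection of the interval summands of $M$ (which avoids ever having to analyze $N'$ directly).

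The key input is that interval modules are indecomposable with \emph{local} endomorphism ring. Indeed, for any interval $I$ a natural endomorphism of $\I_I$ is multiplication by a scalar $c_p\in\F$ at each $p\in I$; naturality together with the fact that every internal structure map of $\I_I$ is $\mathrm{id}_\F$ forces $c_p=c_q$ for comparable $p,q\in I$, and since $I$ is connected (Definition~\ref{def:intervals}) the scalar is constant on $I$, so $\mathrm{End}(\I_I)\cong\F$, a local ring. Now fix an isomorphism identifying $M$ with an internal direct sum $\bigoplus_{j\in\mathcal J}U_j$, each $U_j\cong\I_{I_j}$ an interval module. This is a decomposition of $M$ into indecomposables with local endomorphism rings, so by the Azumaya--Krull--Remak--Schmidt theorem it complements direct summands: there is a subset $\mathcal S\subseteq\mathcal J$ with $M=N\oplus\bigoplus_{j\in\mathcal S}U_j$. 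Hence $M/N\cong\bigoplus_{j\in\mathcal S}U_j\cong\bigoplus_{j\in\mathcal S}\I_{I_j}$ is interval decomposable, and in fact $\barc(M/N)=\{I_j : j\in\mathcal S\}$.

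The only point that needs care is invoking the theorem at the right strength. Mere uniqueness of a decomposition into indecomposables does not by itself suffice, since a priori a summand such as $N'$ is not known to decompose into indecomposables at all when $\mathcal J$ is infinite; what is needed is the stronger statement that a direct-sum decomposition into modules with local endomorphism rings complements direct summands (equivalently, that such modules have the exchange property). When $\Pb$ is finite and $M$ finite dimensional --- the setting in which this proposition is applied in \S\ref{sec:computing interval summands and detecting interval decomposability} --- the category of $\Pb$-modules is Krull--Schmidt and the argument is entirely standard; in the generality stated here it is the classical theory of modules with local endomorphism rings. A minor piece of bookkeeping is that the given decomposition of $M$ is an abstract isomorphism, so one should first transport it to an internal decomposition of $M$ before applying the complementation statement.
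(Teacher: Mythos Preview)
Your proof is correct and follows essentially the same strategy as the paper: both reduce the claim to the assertion that a direct summand of an interval decomposable module is again interval decomposable, and both invoke the Azumaya--Krull--Remak--Schmidt theorem for this. The paper reaches $M\cong N\oplus(M/N)$ via the splitting lemma applied to $0\to N\hookrightarrow M\to M/N\to 0$ and then appeals to AKRS (implicitly together with the fact that in $\vect$ every module decomposes into indecomposables), whereas you bypass the splitting lemma by observing $M/N\cong N'$ directly and instead invoke the complementation/exchange form of AKRS to produce an interval-module complement to $N$. Your version is more explicit about why interval modules have local endomorphism rings and about which strength of the theorem is actually needed when $\Pb$ or $\mathcal J$ is infinite; in the paper's intended finite-dimensional setting the two arguments collapse to the same thing.
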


\begin{proof} It suffices to show that $M\cong N\oplus (M/N)$ (since $M$ is interval decomposable, this isomorphism and Azumaya-Krull-Remak-Schmidt's theorem \cite{azumaya1950corrections} guarantee that $M/N$ is also interval decomposable).
Let $\iota:N\hookrightarrow M$ be the canonical inclusion. Since $N$ is a \emph{summand} of $M$, there exists a complementary submodule $N'\leq M$ with $M=N\oplus N'$  (internal direct sum). Hence, there exists $\pi:M\twoheadrightarrow N$ such that $\pi\circ \iota=\id_N$. Hence, by the splitting lemma \cite[Theorem 23.1]{munkres2018elements}, the exact sequence $0\rightarrow N \stackrel{\iota}{\rightarrow} M \rightarrow M/N \rightarrow 0$ splits, i.e. $M\cong N\oplus (M/N)$ (external direct sum), as desired.
\end{proof}

\subsection{Generalized rank invariant and generalized persistence diagrams }\label{sec:generalized rank invariant}

Let $\Pb$ be a finite connected poset and consider any $\Pb$-module $M$. Then $M$ admits a limit  $\varprojlim M=(L, (\pi_p:L\rightarrow M_p)_{p\in \Pb})$ and a colimit $\varinjlim M =(C, (i_p:M_p\rightarrow C)_{p\in \Pb})$; see Appendix \ref{sec:limits and colimits}. This implies that, for every $p\leq q$ in $\Pb$, $\varphi_M(p\leq q)\circ \pi_p =\pi_q\ \ \mbox{and }\  i_q\circ \varphi_M(p\leq q) =i_p$, which in turn imply $i_p \circ \pi_p=i_q \circ \pi_q:L\rightarrow C$ for any $p,q\in \Pb$.

\begin{definition}[{\cite{kim2018generalized}}]\label{def:rank}
The \textbf{canonical limit-to-colimit map $\psi_M:\varprojlim M\rightarrow \varinjlim M$} is the linear map $i_p\circ \pi_p$ for \emph{any} $p\in \Pb$. The \textbf{generalized rank} of $M$ is $\rank(M):=\rank(\psi_M)$. 
\end{definition}

The rank of $M$ counts the multiplicity of the fully supported interval modules in a direct sum decomposition of $M$.
\begin{theorem}[{\cite[Lemma 3.1]{chambers2018persistent}}]
\label{thm:rk}
The rank of $M$ is equal to the number of indecomposable summands of $M$ which are isomorphic to the interval module $\mathbb{I}_{\Pb}$. 
\end{theorem}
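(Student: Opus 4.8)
The plan is to decompose $M$ into its indecomposable summands and track how the limit-to-colimit map behaves under direct sums. First I would invoke the Azumaya--Krull--Remak--Schmidt theorem to write $M \cong \bigoplus_{j\in\mathcal{J}} M_j$ with each $M_j$ indecomposable; since $\Pb$ is finite and connected and each $M_j$ takes values in $\vect$, this is a \emph{finite} direct sum, which is essential so that limits and colimits distribute over it. The key observation is that both $\varprojlim$ and $\varinjlim$ are additive functors on finite direct sums, so $\varprojlim M \cong \bigoplus_j \varprojlim M_j$ and $\varinjlim M \cong \bigoplus_j \varinjlim M_j$, and moreover these identifications are compatible with the canonical cone/cocone maps. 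Consequently $\psi_M \cong \bigoplus_j \psi_{M_j}$ as a block-diagonal map, so $\rank(\psi_M) = \sum_j \rank(\psi_{M_j})$.

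Next I would reduce to a single indecomposable summand: it suffices to show that for an indecomposable $\Pb$-module $N$, $\rank(\psi_N) = 1$ if $N \cong \I_\Pb$ and $\rank(\psi_N) = 0$ otherwise. The first case is an easy direct computation — for $\I_\Pb$, every structure map is $\id_\F$, so $L = \F = C$ and each $\pi_p, i_p$ is the identity, whence $\psi_{\I_\Pb} = \id_\F$ has rank $1$. For the second case, the cleanest route is to argue that if $\psi_N \neq 0$ then $\I_\Pb$ splits off as a summand of $N$, contradicting indecomposability (unless $N \cong \I_\Pb$). Concretely, if $\psi_N = i_p \circ \pi_p \neq 0$, pick $v \in L$ with $\psi_N(v) \neq 0$; the family $(\pi_p(v))_{p\in\Pb}$ is a compatible choice of elements (a "section") and $(i_p)$ sends all of them to the same nonzero element of $C$. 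One then checks that the $\F$-spans $\F\cdot \pi_p(v) \subseteq N_p$ form a submodule isomorphic to $\I_\Pb$, and — using that we are over a field, so we may also produce a complementary projection from a functional on $C$ dual to $i_p\circ\pi_p(v)$ — that this submodule is in fact a summand. Indecomposability then forces $N \cong \I_\Pb$.

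The main obstacle is this last splitting argument: one must verify both that the pointwise lines $\F\cdot\pi_p(v)$ are closed under all structure maps $\varphi_N(p,q)$ (which follows from $\varphi_N(p,q)\circ\pi_p = \pi_q$ together with the fact that all the images $i_q\circ\pi_q(v)$ coincide in $C$, forcing $\varphi_N(p,q)(\pi_p(v))$ to be a nonzero multiple of $\pi_q(v)$, hence exactly $\pi_q(v)$ after suitably normalizing) and that a complement exists as a \emph{submodule}, not merely pointwise. For the complement, the natural idea is to choose a linear functional $\lambda \colon C \to \F$ with $\lambda(i_p\pi_p(v)) = 1$ and set $N'_p := \ker(\lambda \circ i_p) \subseteq N_p$; one checks $\varphi_N(p,q)(N'_p)\subseteq N'_q$ using $i_q\circ\varphi_N(p,q) = i_p$, and that $N_p = \F\cdot\pi_p(v)\oplus N'_p$. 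Alternatively — and this is likely the shortest path — one can cite Theorem \ref{thm:rk} in the contrapositive direction is circular, so instead I would lean on the limit/colimit computations developed in \S\ref{sec:canonical limits and colimits} (referenced but not shown in this excerpt) to make the section-and-functional construction fully rigorous; the acknowledgement's note that reviewers shortened an argument suggests a slick version exists, and I would aim to match it by phrasing everything in terms of the single compatible tuple $(\pi_p(v))_p$ and its image in $C$.
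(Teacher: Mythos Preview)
The paper does not actually prove Theorem~\ref{thm:rk}; it is stated with a citation to \cite[Lemma~3.1]{chambers2018persistent} and no argument is given. So there is no ``paper's proof'' to compare against---your proposal supplies a proof where the paper simply imports one.

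Your argument is correct and is essentially the standard one. Two small comments. First, the step where you worry about ``a nonzero multiple of $\pi_q(v)$, hence exactly $\pi_q(v)$ after suitably normalizing'' is unnecessary: the cone condition for the limit gives $\varphi_N(p,q)\circ\pi_p=\pi_q$ on the nose, so $\varphi_N(p,q)(\pi_p(v))=\pi_q(v)$ exactly, and $\pi_p(v)\neq 0$ because $i_p(\pi_p(v))=\psi_N(v)\neq 0$. Second, your splitting via $N'_p:=\ker(\lambda\circ i_p)$ is clean and works exactly as you describe; the cocone identity $i_q\circ\varphi_N(p,q)=i_p$ makes $N'$ a submodule, and $\lambda(i_p(\pi_p(v)))=1$ forces the direct sum decomposition pointwise. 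That already finishes the argument---no further appeal to \S\ref{sec:canonical limits and colimits} is needed. (The acknowledgement about reviewers shortening a proof concerns Theorem~\ref{thm:rank is the multiplicity of full intervals}, not this result.)
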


\begin{definition}\label{def:rank function} 
The \textbf{$(\Int)$-generalized rank invariant} of $M$ is the map $\intrk(M):\Int(\Pb)\rightarrow \Z_+$ defined as $I\mapsto \rank(M|_I)$, where $M|_I$ is the restriction of $M$ to $I$.
\end{definition}

\begin{definition}\label{def:IGPD} 
The \textbf{$(\Int)$-generalized persistence diagram of $M$} is the unique\footnote{The existence and uniqueness is guaranteed by properties of the M\"obius inversion formula \cite{rota1964foundations,stanley2011enumerative}.} function $\intdgm(M):\Int(\Pb)\rightarrow \Z$ that satisfies, for any $I\in \Int(\Pb)$, \[\intrk(M)(I)=\sum_{\substack{J\supseteq I\\ J\in\Int(\Pb)}}\intdgm(M)(J).\]
\end{definition}

The following is a slight variation of \cite[Theorem 3.14]{kim2018generalized} and \cite[Theorem 5.10]{asashiba2019approximation}.
\begin{theorem}\label{thm:dgm generalizes barc} If a given $M:\Pb\rightarrow\vect$ is interval decomposable, then for all $I\in \Int(\Pb)$,
$\intdgm(M)(I)$ is equal to the multiplicity of $I$ in $\barc(M)$.
\end{theorem}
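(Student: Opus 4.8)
The plan is to combine the defining Möbius-inversion relation for $\intdgm(M)$ with the additivity of the generalized rank invariant over direct sums, and then reduce to the case of a single interval module. First I would observe that both sides of the claimed identity are additive: if $M\cong M'\oplus M''$, then for every $I\in\Int(\Pb)$ the restriction $M|_I \cong M'|_I \oplus M''|_I$, and since the limit-to-colimit map of a direct sum is the direct sum of the limit-to-colimit maps (limits and colimits commute with finite direct sums), we get $\intrk(M)(I) = \intrk(M')(I) + \intrk(M'')(I)$. By uniqueness of Möbius inversion (Definition \ref{def:IGPD}), it follows that $\intdgm(M) = \intdgm(M') + \intdgm(M'')$ as well. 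Likewise the multiplicity function $I\mapsto (\text{mult.\ of }I\text{ in }\barc(M))$ is additive under direct sums. Hence, writing $M\cong\bigoplus_{j\in\mathcal J}\I_{I_j}$, it suffices to prove the statement when $M=\I_{J}$ is a single interval module for $J\in\Int(\Pb)$ (with some care that the index set $\mathcal J$ may be infinite, though at each $I$ only finitely many summands are supported near $I$ since the intervals are finite — or one restricts attention to a fixed finite $I$ at a time).

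Next I would compute $\intrk(\I_J)$ explicitly. For a fixed finite interval $I\in\Int(\Pb)$, the restriction $\I_J|_I$ is the interval module $\I_{J\cap I}$ on $I$ if $J\cap I\neq\emptyset$, but note $J\cap I$ need not be an interval of $I$ in general; however what matters is the rank of its limit-to-colimit map. I claim $\rank\big((\I_J|_I)\big)$ equals $1$ if $I\subseteq J$ and $0$ otherwise. Indeed, if $I\subseteq J$, then $\I_J|_I = \I_I$ is the full interval module on $I$, whose limit and colimit are both $\F$ and whose limit-to-colimit map is $\id_\F$, so the rank is $1$; this is exactly Theorem \ref{thm:rk} applied to $\Pb=I$. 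If $I\not\subseteq J$, I would argue the rank is $0$: pick $p\in I\setminus J$; the limit-to-colimit map factors through $\I_J(p)=0$ (using Definition \ref{def:rank}, which says $\psi_M = i_p\circ\pi_p$ for \emph{any} choice of $p$), hence $\psi_{\I_J|_I}=0$ and its rank is $0$. This gives $\intrk(\I_J)(I) = \mathbbm 1[I\subseteq J]$.

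Finally I would verify the Möbius-inversion identity of Definition \ref{def:IGPD} directly for $M=\I_J$. We must check that the candidate $\intdgm(\I_J)(K) := \mathbbm 1[K=J]$ satisfies, for every $I\in\Int(\Pb)$,
\[
\intrk(\I_J)(I) \;=\; \sum_{\substack{K\supseteq I\\ K\in\Int(\Pb)}} \intdgm(\I_J)(K).
\]
The right-hand side equals $\mathbbm 1[J\supseteq I] = \mathbbm 1[I\subseteq J]$, which matches the formula for $\intrk(\I_J)(I)$ computed above. By the uniqueness clause in Definition \ref{def:IGPD}, $\intdgm(\I_J)$ is precisely this indicator, i.e.\ the multiplicity of $K$ in $\barc(\I_J)=\{J\}$. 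Combining with the additivity reduction of the first paragraph completes the proof.

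The main obstacle I anticipate is the bookkeeping around infinite index sets $\mathcal J$ and the additivity of Möbius inversion: one needs that for a fixed finite $I$, only finitely many $J_j$ contain $I$ (which holds because the $J_j$ are finite and $\Pb$ is locally finite, so the sum in Definition \ref{def:IGPD} is finite), and that $\intrk$ and $\intdgm$ genuinely commute with the (possibly infinite) direct sum when evaluated at each $I$. Everything else — computing ranks of interval modules via Theorem \ref{thm:rk} and the "any $p$" freedom in Definition \ref{def:rank} — is routine.
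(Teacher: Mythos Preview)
Your proof is correct and follows essentially the same route as the paper's: both establish the identity $\intrk(M)(I)=\sum_{J\supseteq I}\mu_J$ (you via explicit additivity plus the single-interval computation $\intrk(\I_J)(I)=\mathbbm{1}[I\subseteq J]$, the paper by citing Theorem~\ref{thm:rk} and \cite[Proposition 3.17]{kim2018generalized}) and then invoke the uniqueness clause in Definition~\ref{def:IGPD}. Your worry about infinite $\mathcal J$ is unnecessary here, since in the setting of \S\ref{sec:generalized rank invariant} the poset $\Pb$ is assumed finite and $M$ lands in $\vect$, so the barcode is automatically finite.
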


\begin{proof} For $I\in \Int(\Pb)$, let $\mu_I\in\Zplus$ be the multiplicity of $I$ in $\barc(M)$. By invoking Theorem \ref{thm:rk}, we have that $\intrk(M)(I)=\sum_{\substack{J\supseteq I\\ J\in\Int(\Pb)}} \mu_J$ \cite[Proposition 3.17]{kim2018generalized}. By the uniqueness of $\intdgm(M)$ mentioned in Definition \ref{def:IGPD}, $\intdgm(M)(I)=\mu_I$ for all $I\in \Int(\Pb)$.
\end{proof}

We consider $P$ to be a 2d-grid and focus on the setting of $\Z^2$-modules.

\begin{definition}\label{def:interval neighborhood}For any $I\in\Int(\Z^2)$, we define $\intnbd(I):=\{p\in \Z^2\setminus I: I\cup\{p\}\in \Int(\Z^2)\}.$
\end{definition}
Note that $\intnbd(I)$ is nonempty \cite[Proposition  3.2]{asashiba2019approximation}. 
When $A\subseteq \intnbd(I)$ contains more than one point, $A\cup I$ is not necessarily an interval of $\Z^2$. However, there always  exists a unique smallest interval that contains $A\cup I$ which is denoted by $\overline{A\cup I}$.

\begin{remark}[{\cite[Theorem 5.3]{asashiba2019approximation}}] If in Definition \ref{def:IGPD} we assume that $\Pb\in \Int(\Z^2)$ 
then we have that for every $I\in\Int(\Pb)$,\footnote{In \cite{asashiba2019approximation}, only the case  $\Pb=\{1,\ldots,m\}\times \{1,\ldots,n\}\subset \Z^2$ was considered. However, it is not difficult to check that Eq.~(\ref{eq:formula}) is still valid for any finite interval $P$ in $\Z^2$ and any subinterval $I\subseteq P$.} 
\begin{equation}\label{eq:formula}
 \intdgm(M)(I)=\intrk(M)(I)+\sum_{\substack{A\subseteq \intnbd(I)\cap P\\ A\neq \emptyset }}(-1)^{\abs{A}}\intrk(M)\bigg(\overline{A\cup I}\bigg).
\end{equation}
\end{remark}

\subsection{Canonical constructions of limits and colimits}\label{sec:canonical limits and colimits}

Let $M$ be any $P$-module.
\begin{notation}\label{not:sim}
Let $p,q\in \Pb$ and let $v_{p}\in M_p$ and $v_{q}\in M_q$. We write $v_p\sim v_q$ if $p$ and $q$ are comparable, and either $v_p$ is mapped to $v_q$ via $\varphi_M(p,q)$ or $v_q$ is mapped to $v_p$ via $\varphi_M(q,p)$.
\end{notation}

The following proposition gives a standard way of constructing a limit and a colimit of a $P$-module $M$. Since it is well-known, we do not prove it (see for example \cite[Section E]{kim2018generalized}).

\begin{proposition}\label{prop:computation} 
	\begin{enumerate}[label=(\roman*)]
		\item \label{item:limit}The limit of $M$ is (isomorphic to) the pair $\left(W,(\pi_p)_{p\in \Pb}\right)$ where: 	\begin{equation}\label{eq:limit}
		    W:=\left\{(v_p)_{p\in \Pb}\in \bigoplus_{p\in \Pb} M_p:\ \forall p\leq q \mbox{ in } \Pb,\ v_{p}\sim v_{q} \right\}
		\end{equation}
		and for each $p\in \Pb$, the map $\pi_p:W\rightarrow M_p$ is the canonical projection. An element of $W$ is called a \textbf{section} of $M$.
		\item The colimit of $M$ is (isomorphic to) the pair $\left(U, (i_p)_{p\in \Pb}\right)$ described as follows: For $p\in \Pb$, let the map $j_p:M_p\hookrightarrow \bigoplus_{p\in \Pb}M_p$ be the canonical injection. $U$ is the quotient  $\left(\bigoplus_{p\in \Pb}M_p\right)/T$, where $T$ is the subspace of $\bigoplus_{p\in \Pb} M_p$ which is generated by the vectors of the form  $j_p(v_p)-j_q(v_q), \ v_p\sim v_{q},$ the map $i_p:M_p\rightarrow U$ is the composition $\rho\circ j_p$, where $\rho$ is the quotient map   $\bigoplus_{p\in \Pb} M_p\rightarrow U$.
		\label{item:colim}
	\end{enumerate}
\end{proposition}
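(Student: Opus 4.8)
\textbf{Proof plan for Proposition \ref{prop:computation}.}

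The plan is to verify directly that each of the two candidate pairs satisfies the relevant universal property, since the limit and colimit are unique up to isomorphism. I begin with the limit. First I would check that $W$, as defined by Eq.~(\ref{eq:limit}), together with the canonical projections $\pi_p : W \to M_p$ forms a cone over $M$: for $p \leq q$ in $\Pb$ and any section $(v_r)_{r\in\Pb} \in W$, the defining condition $v_p \sim v_q$ together with comparability $p \leq q$ forces $\varphi_M(p,q)(v_p) = v_q$, which is exactly the statement $\varphi_M(p,q)\circ\pi_p = \pi_q$. Next, given any other cone $\left(Z, (\sigma_p : Z \to M_p)_{p\in\Pb}\right)$, I would define the mediating map $h : Z \to W$ by $h(z) = (\sigma_p(z))_{p\in\Pb}$; one must check that this tuple actually lies in $W$, i.e. that $\sigma_p(z) \sim \sigma_q(z)$ whenever $p \leq q$, which again follows from the cone condition $\varphi_M(p,q)\circ\sigma_p = \sigma_q$. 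Finally, $\pi_p \circ h = \sigma_p$ holds by construction, and uniqueness of $h$ follows because the $\pi_p$ are jointly the coordinate projections, so any map into $W$ compatible with them is determined coordinatewise. This establishes that $\left(W, (\pi_p)_{p\in\Pb}\right)$ is a limit.

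For the colimit I would argue dually. I would first confirm that $\left(U, (i_p)_{p\in\Pb}\right)$ is a cocone: for $p \leq q$, the relation $v_p \sim v_q$ with $\varphi_M(p,q)(v_p) = v_q$ means $j_p(v_p) - j_q(\varphi_M(p,q)(v_p))$ lies in $T$ for every $v_p \in M_p$, hence $i_q \circ \varphi_M(p,q) = \rho\circ j_q\circ\varphi_M(p,q) = \rho\circ j_p = i_p$ after passing to the quotient. For the universal property, given a cocone $\left(Z, (\tau_p : M_p \to Z)_{p\in\Pb}\right)$, the maps $\tau_p$ assemble into a single map $\bigoplus_{p\in\Pb} M_p \to Z$ out of the direct sum; I would check this map kills every generator $j_p(v_p) - j_q(v_q)$ of $T$ (using the cocone relations and Notation \ref{not:sim}, treating the two directions of $\sim$ symmetrically), hence factors uniquely through the quotient $U$ to give the mediating map $U \to Z$. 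Compatibility with the $i_p$ and uniqueness of the factorization are then immediate from the quotient construction.

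The only mildly delicate point — and the one I would be most careful about — is bookkeeping around Notation \ref{not:sim}: the relation $v_p \sim v_q$ is symmetric and covers both $\varphi_M(p,q)(v_p) = v_q$ and $\varphi_M(q,p)(v_q) = v_p$, so in every place where I invoke it I must handle both orientations, and I should note that for a poset (unlike a general small category) it suffices to impose $v_p \sim v_q$ for \emph{covering} relations or even just for all comparable pairs, since composability of structure maps propagates the condition. Because the statement is classical, I would keep this verification brief, or simply cite \cite[Section E]{kim2018generalized} as the excerpt already does, noting that the explicit forms of $W$ and $U$ are exactly what one obtains by specializing the general construction of limits as equalizers of products and colimits as coequalizers of coproducts to the poset setting.
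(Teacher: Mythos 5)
Your proposal is correct. There is no in-paper argument to compare against: the paper states Proposition \ref{prop:computation} as well known and simply refers to \cite[Section E]{kim2018generalized}, and your direct verification of the terminal and initial universal properties (cone/cocone conditions, mediating maps, uniqueness via coordinate projections resp.\ surjectivity of the quotient map) is exactly the standard argument that this citation stands in for. Two small points to make explicit if you write it out: the ambient object is the direct sum $\bigoplus_{p\in \Pb} M_p$, so the mediating map $z\mapsto(\sigma_p(z))_{p\in \Pb}$ lands in $W$ only because $\Pb$ is finite (the paper's standing setting), where direct sum and direct product coincide; and since $\Pb$ is a poset, for $p<q$ only $\varphi_M(p,q)$ exists, so the relation $v_p\sim v_q$ of Notation \ref{not:sim} with $p\leq q$ reduces to the single condition $\varphi_M(p,q)(v_p)=v_q$, which slightly simplifies the bookkeeping you were worried about (the two-orientation care is only needed for the colimit generators). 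Your remark that imposing $\sim$ on covering relations suffices also relies on finiteness of $\Pb$, but it is not needed for the proof itself.
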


\begin{framed}
\begin{setup}\label{convention}
In the rest of the paper, limits and colimits of a $\Pb$-module $M$ will all be constructed as in Proposition \ref{prop:computation}. Hence, assuming that $\Pb$ is connected, the canonical limit-to-colimit map $\varprojlim M \rightarrow \varinjlim M$ is $\psi_{M}:= i_p\circ \pi_p$ for any $p\in \Pb$. 

\end{setup} 
\end{framed}

\section{Computing generalized rank via boundary zigzags}\label{sec:generalized rank via boundary zigzag}
In \S \ref{sec:lower and upper fences} we introduce the notions of lower and upper fences of a poset. In \S \ref{sec:boundary cap}, we introduce the \emph{boundary cap} $\partial I$ of a finite interval $I$ of $\Z^2$, which is a path, a certain sequence of points in $I$. In \S\ref{sec: generalized rank via boundary}, we show that the rank of any functor $M:I\rightarrow \vect$ can be obtained by computing the barcode of the zigzag module over the path $\partial I$.

\subsection{Lower and upper fences of a poset}\label{sec:lower and upper fences}

Let $P$ be any connected poset. Given any $p\in P$, by $p^{\downarrow}$, we denote the set of all elements of $P$ that are less than or equal to $p$. Dually $p^{\uparrow}$ is defined as the set of all elements of $P$ that are greater than or equal to $p$. 

\begin{definition}\label{def:fences} A subposet $L\subset P$ (resp. $U\subset P$) is called a \emph{lower (resp. upper) fence} of $P$ if $L$ is connected, and for any $q\in P$, the intersection $L\cap q^{\downarrow}$ (resp. $U\cap q^{\uparrow}$) is nonempty and connected.
\end{definition}

\begin{proposition}\label{prop:section extension}\label{prop:commute}
Let $L$ and $U$ be a lower and an upper fences of $P$ respectively. Given any $P$-module $M$, we have
$\varprojlim M \cong \varprojlim M|_{L}$ and $\varinjlim M \cong \varinjlim M|_{U}$.
\end{proposition}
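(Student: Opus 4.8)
The plan is to prove the statement about $\varprojlim M \cong \varprojlim M|_L$; the claim about colimits then follows by a dual argument (reversing all arrows, swapping $\downarrow$ for $\uparrow$, products for coproducts, and sub-vectors for quotient relations), so I would state that explicitly and only write out the limit case. First I would use the canonical model of the limit from Proposition \ref{prop:computation}\ref{item:limit}: $\varprojlim M$ is the space $W$ of sections $(v_p)_{p\in P}$ of $M$, and $\varprojlim M|_L$ is the analogous space $W_L$ of sections of $M|_L$ indexed only by $p\in L$. There is an obvious restriction map $r\colon W\to W_L$, $(v_p)_{p\in P}\mapsto (v_p)_{p\in L}$, which is clearly linear; the whole content is to show $r$ is a bijection, and moreover that the projections match up, i.e. that $r$ is compatible with the cone structure so it is genuinely an isomorphism of limits and not merely of the underlying vector spaces. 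Compatibility with projections is automatic once $r$ is defined this way, because $\pi^{W_L}_p\circ r = \pi^W_p$ for $p\in L$ by construction, and the universal property then forces $r$ to be the canonical comparison isomorphism.

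For injectivity of $r$: suppose $(v_p)_{p\in P}$ is a section with $v_p=0$ for all $p\in L$. I want to show $v_q=0$ for every $q\in P$. Fix $q$; by the fence hypothesis $L\cap q^{\downarrow}$ is nonempty, so pick $p\in L$ with $p\le q$. Since $(v_p)$ is a section and $p\le q$, we have $v_p\sim v_q$, meaning $\varphi_M(p,q)(v_p)=v_q$; as $v_p=0$, we get $v_q=0$. For surjectivity, this is the heart of the argument. Given a section $(w_p)_{p\in L}$ of $M|_L$, I need to extend it to a section over all of $P$. For $q\in P$, I would like to define $v_q:=\varphi_M(p,q)(w_p)$ for some $p\in L\cap q^{\downarrow}$, and the key well-definedness claim is that this does not depend on the choice of $p$. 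Here is where connectedness of $L\cap q^{\downarrow}$ is used: if $p,p'\in L\cap q^{\downarrow}$, there is a zigzag path $p=p_0, p_1,\dots,p_\ell=p'$ inside $L\cap q^{\downarrow}$ with consecutive elements comparable. Along each step $p_i\leftrightarrow p_{i+1}$, both lie below $q$, and since $(w_p)$ is a section on $L$ we have $w_{p_i}\sim w_{p_{i+1}}$; a short diagram chase using functoriality of $M$ (the composition law $\varphi_M(a,c)=\varphi_M(b,c)\circ\varphi_M(a,b)$ when $a\le b\le c$, applied to each of $p_i,p_{i+1}$ relative to $q$) shows $\varphi_M(p_i,q)(w_{p_i})=\varphi_M(p_{i+1},q)(w_{p_{i+1}})$. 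Chaining these equalities along the path gives $\varphi_M(p,q)(w_p)=\varphi_M(p',q)(w_{p'})$, so $v_q$ is well defined. Note that for $q\in L$ itself we may take $p=q$, so $v_q=w_q$ and $r(v)=w$.

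It then remains to check that the tuple $(v_q)_{q\in P}$ so defined is actually a section, i.e. $v_a\sim v_b$ whenever $a\le b$ in $P$; this again uses the fence property and functoriality: choose $p\in L\cap a^{\downarrow}\subseteq L\cap b^{\downarrow}$ (the inclusion holds since $a\le b$), and then $v_a=\varphi_M(p,a)(w_p)$ and $v_b=\varphi_M(p,b)(w_p)=\varphi_M(a,b)\circ\varphi_M(p,a)(w_p)=\varphi_M(a,b)(v_a)$, as needed. The main obstacle, and the step deserving the most care, is the well-definedness of $v_q$ — getting the diagram chase along the connecting zigzag in $L\cap q^{\downarrow}$ exactly right, in particular handling the two possible orientations of each comparability $p_i\leftrightarrow p_{i+1}$ uniformly. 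Everything else is routine bookkeeping with the explicit limit model.
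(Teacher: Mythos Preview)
Your proposal is correct and follows essentially the same approach as the paper. The paper works with the extension map $e:\varprojlim M|_L\to\varprojlim M$ (the inverse of your restriction $r$), defines it by the same formula $v_q:=\varphi_M(p,q)(w_p)$, notes that connectedness of $L\cap q^{\downarrow}$ gives well-definedness, and packages the verification that the extended tuple is a section into a separate lemma (Lemma~\ref{lem:minimal-extent}); your write-up simply inlines these steps and argues via $r$ instead of $e$, but the mathematical content is identical.
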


There are multiple proofs of Proposition \ref{prop:section extension}. One is given in an earlier version of \cite{kim2018generalized} (see Proposition D.14 in the second arXiv version). A concrete and tangible proof, which utilizes the following lemma, is given below. 

\begin{lemma}\label{lem:minimal-extent} Let $M$ be a $P$-module and let $L\subset P$ be a lower fence. Let $\bv\in \bigoplus_{p\in I}{M_p}$.
The tuple $\bv$ belongs to $\varprojlim M$ if and only if for  \emph{every} $p\in P$, it holds that $\bv_p=\varphi_M(q,p)(\bv_q)$ for
\emph{every} $q\in L$ such that $q\leq p$. 
\end{lemma}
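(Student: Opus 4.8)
The statement is an iff, so I would prove the two directions separately. The forward direction ($\bv\in\varprojlim M \Rightarrow$ the compatibility condition with lower-fence elements) is essentially immediate from the explicit description of the limit in Proposition \ref{prop:computation}\ref{item:limit}: if $\bv=(\bv_p)_{p\in P}$ is a section, then for any comparable pair $q\leq p$ we have $\bv_p\sim\bv_q$, and since $q\leq p$ this forces $\bv_p=\varphi_M(q,p)(\bv_q)$; in particular this holds for every $q\in L$ with $q\leq p$. So there is nothing to do there beyond unwinding the definition of $\sim$.

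The substantive direction is the converse. Assume $\bv=(\bv_p)_{p\in P}$ satisfies: for every $p\in P$ and every $q\in L$ with $q\leq p$, $\bv_p=\varphi_M(q,p)(\bv_q)$. I must show $\bv$ is a section, i.e. $\bv_p\sim\bv_{p'}$ for every comparable pair $p\leq p'$ in $P$. The idea is to route through $L$. Fix $p\leq p'$. By the lower-fence axiom, $L\cap p^{\downarrow}$ is nonempty and connected, and moreover $L\cap p^{\downarrow}\subseteq L\cap (p')^{\downarrow}$. Pick any $q\in L\cap p^{\downarrow}$; then $q\leq p\leq p'$, so by hypothesis $\bv_p=\varphi_M(q,p)(\bv_q)$ and $\bv_{p'}=\varphi_M(q,p')(\bv_q)$. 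Since $\varphi_M(q,p')=\varphi_M(p,p')\circ\varphi_M(q,p)$ by functoriality, we get $\bv_{p'}=\varphi_M(p,p')(\bv_p)$, which is exactly $\bv_p\sim\bv_{p'}$. Hence $\bv\in\varprojlim M$.

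The one point that needs a little care — and which I expect to be the main (minor) obstacle — is the nonemptiness step: I need that $L\cap p^{\downarrow}\neq\emptyset$ for every $p\in P$, which is precisely part of the definition of a lower fence (Definition \ref{def:fences}), so it is available. Connectedness of $L\cap p^{\downarrow}$ is not actually needed for this argument, only nonemptiness and the fact that any element of it lies below $p$. I would remark that the hypothesis "$\bv_p=\varphi_M(q,p)(\bv_q)$ for every such $q$" is in particular non-vacuous. It is also worth noting that the displayed condition, read for $p\in L$ itself, is automatically consistent: taking $q=p\in L$ gives $\bv_p=\varphi_M(p,p)(\bv_p)=\bv_p$, and taking two different $q,q'\in L$ below $p\in L$ imposes genuine constraints that are part of what makes $\bv|_L$ a section of $M|_L$ — but I don't need to dwell on this, since the clean statement above suffices. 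So the proof is short: forward direction by unwinding $\sim$; converse by factoring structure maps through a chosen element of $L\cap p^{\downarrow}$.
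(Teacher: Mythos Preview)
Your proposal is correct and essentially identical to the paper's proof: both dispatch the forward direction by unwinding the definition of a section, and for the converse both fix a comparable pair, pick a lower-fence element $q$ below the smaller one (using nonemptiness of $L\cap p^{\downarrow}$), and factor the structure map through $q$ via functoriality. Your observation that connectedness of $L\cap p^{\downarrow}$ is not needed here is also accurate.
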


\begin{proof}
The forward direction is straightforward from the description of $\varprojlim M$ given in Proposition \ref{prop:computation} \ref{item:limit}. Let us show the backward direction. Again, by Proposition \ref{prop:computation} \ref{item:limit} it suffices to show that for all $r\leq p$ in $P$ it holds that $\bv_p=\varphi_M(r,p)(\bv_r)$. Fix $r\leq p$ in $P$. Since $L$ is a lower fence, there exists $q\in  L$ such that $q\leq r\leq p$. Then, by assumption and by functoriality of $M$, we have:
\[\bv_p=\varphi_M(q,p)(\bv_q)=\big(\varphi_M(r,p)\circ \varphi_M(q,r)\big)(\bv_q)=\varphi_M(r,p)\big( \varphi_M(q,r)(\bv_q)\big)=\varphi_M(r,p)(\bv_r),\]
as desired.
\end{proof}

\begin{proof}[Proof of Proposition \ref{prop:section extension}]
We only prove $\varprojlim M \cong \varprojlim M|_L$. It suffices to prove that the section extension map $e:\varprojlim M|_L \rightarrow \varprojlim M$ in (\ref{eq:section extension}) is bijective. The injectivity is clear by definition. The surjectivity follows from the forward direction of the statement in Lemma \ref{lem:minimal-extent}. 
\end{proof}

The canonical isomorphism $\varprojlim M \cong \varprojlim M|_{L}$ in Proposition \ref{prop:section extension} is given by the canonical \emph{section extension} $e:\varprojlim M|_{L} \rightarrow \varprojlim M$. Namely,
\begin{equation}\label{eq:section extension}
    e: (\bv_{p})_{p\in L}\mapsto (\bw_{q})_{q\in P },
\end{equation}
where for any $q\in P$, the vector $\bw_q$ is defined as $\varphi_M(p,q)(\bv_p)$ for \emph{any} $p\in L\cap q^{\downarrow}$; the connectedness of $L\cap q^{\downarrow}$ guarantees that $\bw_q$ is well-defined. Also, if $q\in L$, then $\bw_q=\bv_q$. The inverse $r:=e^{-1}$ is the canonical section restriction. The other isomorphism $\varinjlim M \cong \varinjlim M|_{U}$ in Proposition \ref{prop:section extension} is given by the map $i:\varinjlim M|_U\rightarrow \varinjlim M$ defined by $[v_p]\mapsto [v_p]$ for any $p\in U$ and any $v_p\in M_p$; the fact that this map $i$ is well-defined will become clear from Proposition \ref{prop:path characterization of zero in colim}. Let us define $\xi: \varprojlim M|_{L} \rightarrow \varinjlim M|_{U}$ by $i^{-1}\circ \psi_M\circ e$. By construction, the following diagram commutes
\begin{equation}\label{eq:diagram commutes}
\begin{tikzcd}
\varprojlim M|_{L} \arrow{r}{\xi} \arrow[d, "e", "\cong"']
&\varinjlim M|_{U} \arrow[d,"i","\cong"']\\
\varprojlim M \arrow{r}{\psi_M} &\varinjlim M,
\end{tikzcd}
\end{equation}
where $\psi_M$ is the canonical limit-to-colimit map of $M$. Hence we have the fact $\rank(\psi_M)=\rank(\xi)$, which is useful for proving Theorem \ref{thm:rank is the multiplicity of full intervals}.

\subsection{Boundary cap of an interval in $\Z^2$}\label{sec:boundary cap}

Let $I\in \Int(\Z^2)$, i.e. $I$ is a finite interval of $\Z^2$ (Definition \ref{def:intervals}). By $\min (I)$ and $\max (I)$, we denote the collections of minimal and maximal elements of $I$, respectively. In other words,
\[\min (I):=\{p\in I: \mbox{there is no $q\in I$ s.t. $q<p$}\},\] \[ \max (I):=\{p\in I: \mbox{there is no $q\in I$ s.t. $p<q$}\}.
\]
Note that $\min (I)$ and $\max(I)$ are nonempty and that $\min (I)$ and $\max (I)$ respectively form an \emph{antichain} in $I$, i.e. any two different points in $\min (I)$ (or in $\max (I)$) are \emph{not} comparable.

\begin{remark}\label{rem:meet and join in Z2} 
\begin{enumerate}[label=(\roman*),itemsep=0em]
\item The least upper bound and the greatest lower bound of $p,q\in \Z^2$ are denoted by $p\vee q$ and $p\wedge q$ respectively. Let $p=(p_x,p_y)$ and $q=(q_x,q_y)$ in $\Z^2$. Then, $$p\vee q= (\max\{p_x,q_x\}, \max\{p_y,q_y\}),\hspace{3mm}p\wedge q= (\min\{p_x,q_x\}, \min\{p_y,q_y\}).$$  \label{item:meet and join in Z2 1}
\end{enumerate}
For the item below, let $I\in \Int(\Z^2)$. Notice the following: 
\begin{enumerate}[resume,label=(\roman*),itemsep=0em]
    
    \item Since $\min (I)$ is a finite antichain, we can list the elements of $\min( I)$ in ascending order of their $x$-coordinates, i.e. $\min (I):=\{p_0,\ldots,p_k\}$ and such that for each $i=0,\ldots,k,$ the $x$-coordinate of $p_i$ is less than that of $p_{i+1}$. Similarly, let $\max (I):=\{q_0,\ldots,q_\ell\}$ be ordered in ascending order of $q_j$'s $x$-coordinates. We have that $p_0\leq q_0$  (Figure \ref{fig:intervals}).\label{item:meet and join in Z2 2}
\end{enumerate}
\end{remark}

\begin{figure}
    \centering
    \includegraphics[width=0.8\textwidth]{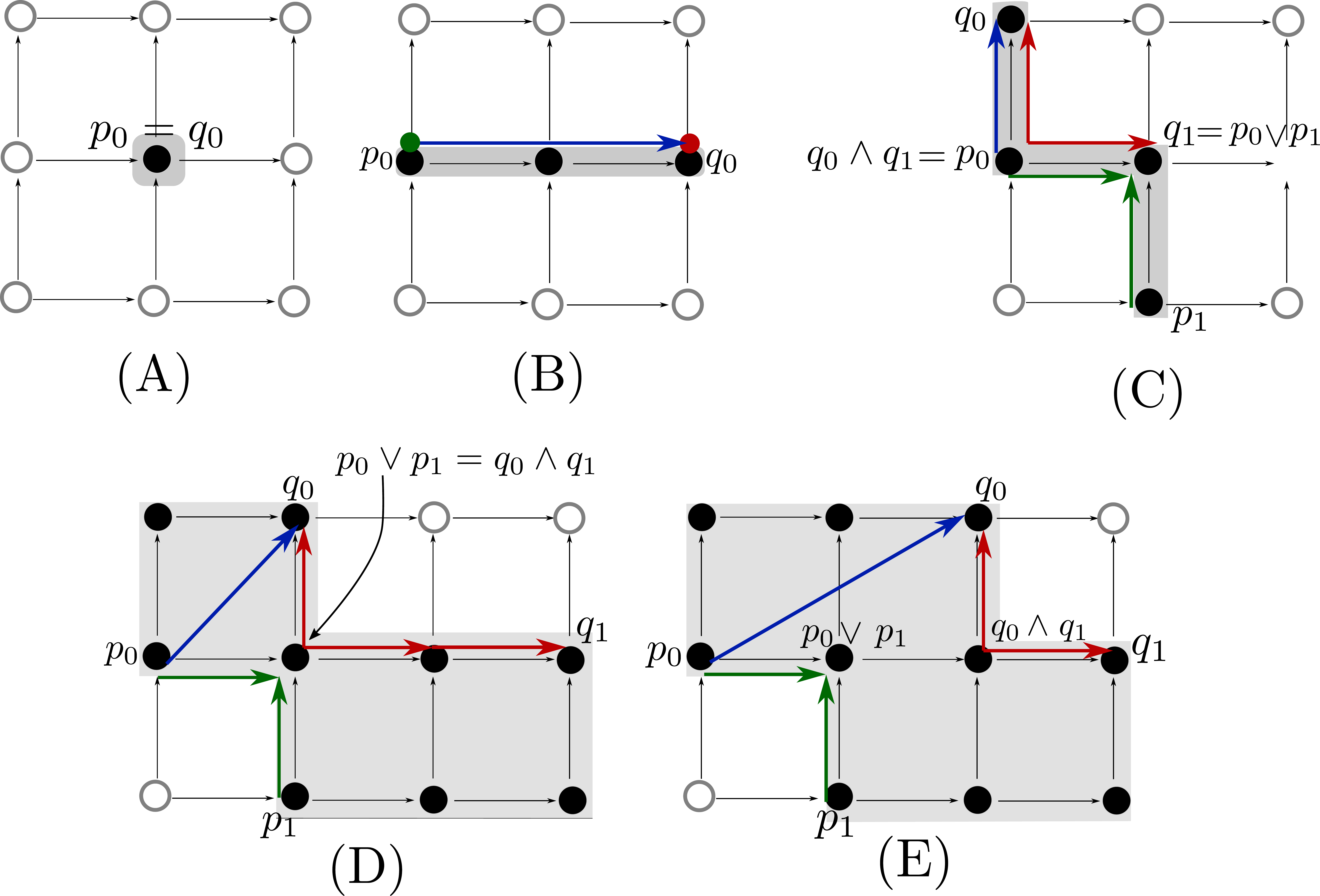}
    \caption{Five different intervals $I$ of $\Z^2$. Relations in $\min_{\ZZ}(I)$ and  $\max_{\ZZ}(I)$ are indicated by green and red arrows, respectively. The inequality $p_0\leq q_0$ is indicated by blue arrows unless $p_0=q_0$. Notice that $\partial I$, as defined in equation (\ref{eq:boundary cap}),   has cardinality $2$, $2$, $6$, $6$, $6$ in that order ((A),(B),(C),(D),(E)).}
    \label{fig:intervals}
\end{figure}

\begin{definition}[Lower and upper zigzags of an interval] 
Let $I$, $\min(I)$, and $\max(I)$ be as in Remark \ref{rem:meet and join in Z2} \ref{item:meet and join in Z2 2}. We define the following two zigzag posets (Figure \ref{fig:intervals}):
\begin{align}
    \overmin&:=\{p_0<(p_0\vee p_1)>p_1<(p_1\vee p_2)>\cdots<(p_{k-1}\vee p_k)>p_k\}\label{eq:zigzag posets}\\&=\min (I)\cup \{p_{i}\vee p_{i+1}:i=0,\ldots,k-1\}, \nonumber\\ \overmax&:=\{q_0>(q_0\wedge q_1)<q_1>(q_1\wedge q_2)<\cdots>(q_{\ell-1}\wedge q_\ell)>q_\ell\} \label{eq:zigzag posets2}\\&=\max (I)\cup \{q_{i}\wedge q_{i+1}:i=0,\ldots,\ell-1\}. \nonumber
\end{align}
\end{definition}
Note that $\overmin$ and $\overmax$ are lower and upper fences of $I$ respectively. 

For $p,q\in \Pb$, let us write $p\triangleleft q$ if $p<q$ and there is no $r\in \Pb$ such that $p<r<q$. Similarly, we write $p\triangleright q$ if $p>q$ and there is no $r\in \Pb$ such that $p>r>q$.

\begin{definition}\label{def:path}
Given a poset $\Pb$, a \textbf{path} $\Gamma$ between two points $p,q\in \Pb$ is a sequence
of points $p=p_0,\ldots, p_k=q$ in $\Pb$ such that either $p_i\leq p_{i+1}$ or
$p_i\geq p_{i+1}$ for every $i\in [1,k-1]$ (in particular, there can be a pair $i\neq j$ such that $p_i=p_j$). The path $\Gamma$ is said to be \textbf{monotonic} if $p_i\leq p_{i+1}$ for each $i$. The path $\Gamma$ is called \textbf{faithful} if either $p_i\triangleleft p_{i+1}$ or
$p_i\triangleright p_{i+1}$ for each $i$.
\end{definition}

\begin{definition}[Boundary cap of an interval]\label{def:boundary cap} We define the \textbf{boundary cap} $\partial I$ of $I\in \Int(\Z^2)$ as the path obtained by concatenating $\min_{\ZZ}(I)$ and $\max_{\ZZ}(I)$ in Eqs. (\ref{eq:zigzag posets}) and (\ref{eq:zigzag posets2}). 
\begin{equation}\label{eq:boundary cap}
    \partial I:=\hspace{5mm}\underbrace{p_k<\ (p_{k}\vee p_{k-1})>\ p_{k-1}<\cdots >\ p_0}_{2k+1\,\mbox{terms from $\overmin$}}\leq \underbrace{q_0> (q_0\wedge q_1)<\  q_1>\cdots< q_\ell}_{2\ell+1\,\mbox{terms from $\overmax$}},
\end{equation}
\end{definition}
We remark that $\partial I$ can contain multiple copies of the same point. Namely, there can be $i\in [0,k]$ and $j\in [0,\ell]$ such that either $p_i=q_j$ (Figure \ref{fig:intervals} (A)), $p_i=q_j\wedge q_{j+1}$ (Figure \ref{fig:intervals} (C)), $p_i\vee p_{i+1}=q_j$ (Figure \ref{fig:intervals} (C)), or $p_i\vee p_{i+1}=q_{j}\wedge q_{j+1}$ (Figure \ref{fig:intervals} (D)). 

Consider the following zigzag poset of the same length as $\partial I$: 
\begin{equation}\label{eq:ZZ partial I}
    \ZZ_{\partial I}:\hspace{5mm} \underbrace{\bullet_{1}<\bullet_{2}>\bullet_{3}<\cdots>\bullet_{2k+1}}_{2k+1}< \underbrace{\circ
_{1}>\circ_2<\circ_3>\cdots<\circ_{2\ell+1}}_{2\ell+1}.
\end{equation}
Still using the notation in Eqs.~(\ref{eq:boundary cap}) we have the  following order-preserving map 
\begin{equation}\label{eq:iotaI}
    \iota_I:\ZZ_{\partial I}\rightarrow I
\end{equation} whose image is $\partial I$: $\bullet_1$ is sent to $p_k$, $\bullet_2$ is sent to $p_k\vee p_{k-1}$, $\ldots$, and $\circ_{2\ell+1}$ is sent to $q_\ell$.

\subsection{Generalized rank invariant via boundary zigzags}\label{sec: generalized rank via boundary}

The goal of this section is to establish Theorem \ref{thm:rank is the multiplicity of full intervals}.

\begin{definition}\label{def:section along a path} 
Let $P$ be a poset. Let $\Gamma:p_0,\ldots,p_k$ be a path in $P$. A $(k+1)$-tuple $\bv\in \bigoplus_{i=0}^k M_{p_i}$ is called the \textbf{section of $M$ along $\Gamma$} if $\bv_{p_i}\sim\bv_{p_{i+1}}$ for each $i$ (Notation \ref{not:sim}).
\end{definition} Note that $\bv$ is not necessarily a section of the restriction  $M|_{\{p_0,\ldots,p_k\}}$  of $M$ to the subposet $\{p_0,\ldots,p_k\}\subseteq I$. Furthermore, $\Gamma$ can contain multiple copies of the same point in $P$.

\begin{example}\label{ex:section along a path}
Consider $M:\{(1,1),(1,2),(2,2),(2,1)\}(\subset \Z^2)\rightarrow \vect$ given as follows.
 \[\begin{tikzcd}
M_{(1,2)} \arrow{r}{} 
&M_{(2,2)} \\
M_{(1,1)} \arrow{r}{} \arrow{u}{} &M_{(2,1)} \arrow{u}{}
\end{tikzcd}\hspace{3mm}=\hspace{3mm}\begin{tikzcd}
\F \arrow{r}{1} 
&\F \\
\F \arrow[swap]{r}{\left(\begin{smallmatrix}1\\0\end{smallmatrix}\right)} \arrow{u}{1} &\F^2 \arrow[swap]{u}{\left(\begin{smallmatrix}1 \ 1\end{smallmatrix}\right)}\end{tikzcd}
\]
Consider the path $\Gamma:(1,1),(1,2),(2,2),(2,1)$ which contains all points in the indexing poset. Then, $\bv:=(1,1,1,(0,1))\in M_{(1,1)}\oplus M_{(1,2)}\oplus M_{(2,2)}\oplus M_{(2,1)}$ is a section of $M$ along $\Gamma$, while $\bv$ is \emph{not} a section of $M$ itself, i.e. $\bv\not\in \varprojlim M$. 
\end{example}

By Proposition \ref{prop:computation} \ref{item:colim}, we directly have:
\begin{proposition}\label{prop:path characterization of zero in colim}
Let $p,q\in P$. For any vectors $v_{p}\in M_{p}$ and $v_q\in M_q$, $[v_{p}]=[v_q]$ in\footnote{For simplicity, we write $[v_{p}]$
 and $[v_q]$ instead of $[j_p(v_p)]$ and $[j_q(v_q)]$ respectively where $j_p:M_p\rightarrow \bigoplus_{r\in P}M_r$  and  $j_q:M_q\rightarrow \bigoplus_{r\in P}M_r$ are the canonical inclusion maps.} the colimit $\varinjlim M$ if and only if there exist a path $\Gamma:p=p_0,p_1,\ldots,p_n=q$ in $P$ and a section $\bv$ of $M$ along $\Gamma$ such that $\bv_{p}=v_{p}$ and $\bv_{q}=v_q$. 
\end{proposition}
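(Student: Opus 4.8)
The plan is to read both implications off the explicit model of the colimit in Proposition~\ref{prop:computation}~\ref{item:colim}. Write $\varinjlim M=\bigl(\bigoplus_{r\in P}M_r\bigr)/T$, where $T$ is spanned by the vectors $j_a(u)-j_b(u')$ over all comparable $a,b$ and all $u\in M_a$, $u'\in M_b$ with $u\sim u'$, and $j_r:M_r\hookrightarrow\bigoplus_s M_s$ is the canonical injection. With this model, $[v_p]=[v_q]$ in $\varinjlim M$ is by definition the assertion $j_p(v_p)-j_q(v_q)\in T$, so the proposition amounts to: $j_p(v_p)-j_q(v_q)\in T$ if and only if there is a path $\Gamma:p=p_0,\dots,p_n=q$ in $P$ and a section $\bv$ of $M$ along $\Gamma$ with $\bv_{p_0}=v_p$ and $\bv_{p_n}=v_q$.

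For the backward implication, given such $\Gamma$ and $\bv$, each $j_{p_i}(\bv_{p_i})-j_{p_{i+1}}(\bv_{p_{i+1}})$ is one of the generators of $T$ (because $\bv_{p_i}\sim\bv_{p_{i+1}}$), hence lies in $T$; summing over $i=0,\dots,n-1$ and telescoping gives $j_p(v_p)-j_q(v_q)\in T$. This argument works verbatim even when $\Gamma$ revisits points of $P$.

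For the forward implication, expand $j_p(v_p)-j_q(v_q)=\sum_{m=1}^N\lambda_m\bigl(j_{a_m}(x_m)-j_{b_m}(y_m)\bigr)$ with $x_m\sim y_m$, and absorb each scalar $\lambda_m$ into $x_m,y_m$ — legitimate because the structure maps of $M$ are linear — so that every coefficient is $1$. I would then extract $\Gamma$ and $\bv$ from this identity by a combinatorial argument in the spirit of a flow decomposition: regard the identity as a vanishing ``labelled circulation'' on the finite graph whose vertices are the indices occurring among $p,q,a_m,b_m$, whose edges are the generators $j_{a_m}(x_m)-j_{b_m}(y_m)$, together with one auxiliary edge recording the contribution $v_p$ at $p$ and $v_q$ at $q$; decomposing this circulation into closed walks plus a single walk from $p$ to $q$, and discarding the closed walks, leaves a walk whose induced vertex labels form a section of $M$ along it with the prescribed endpoints. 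The degenerate case $v_p=v_q=0$ is treated separately by taking any path from $p$ to $q$ — which exists since $P$ is connected in our setting — equipped with the zero section.

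I expect the only genuinely delicate point to be making this extraction precise, because the ``flow'' here is vector-valued rather than $\{0,1\}$-valued, the relation $\sim$ twists vectors across each edge rather than merely identifying them, and — as Example~\ref{ex:section along a path} already indicates — the resulting path $\Gamma$ may be forced to revisit indices of $P$. Beyond that step, the statement is a direct transcription of Proposition~\ref{prop:computation}~\ref{item:colim}.
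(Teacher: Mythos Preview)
Your approach is exactly the paper's: the paper's entire ``proof'' is the single clause ``By Proposition~\ref{prop:computation}~\ref{item:colim}, we directly have,'' so reading both implications off the explicit colimit model is precisely what is intended, and your telescoping argument for the backward implication already supplies more detail than the paper does.

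You are also right that the forward implication is where the substance lies, and the paper simply does not address it. The passage from an arbitrary finite expression $j_p(v_p)-j_q(v_q)=\sum_m\bigl(j_{a_m}(x_m)-j_{b_m}(y_m)\bigr)$ to a \emph{single} path with a section is not automatic: vector-valued ``flows'' do not peel off into unit walks the way $\{0,1\}$-valued flows do, and as you note the relation $\sim$ twists labels across each edge. Your flow-decomposition outline is a reasonable plan; one way to make it precise is to induct on the number $N$ of generators, at each step splicing one generator onto the path under construction using zero-section bridges (which exist by connectivity of $P$) to reach its endpoints --- this is where the revisiting of indices that you anticipate actually occurs. So your flagged ``genuinely delicate point'' is a real gap, but it is a gap the paper shares, not one your approach introduces.
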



The map $\iota_I:\ZZ_{\partial I}\rightarrow I$ in Eqs.~(\ref{eq:iotaI}) induces a bijection between the sections of $M_{\partial I}$  and the sections of $M$ along $\partial I$ in a canonical way. Hence:
\begin{framed}
\begin{setup}\label{setup in section 3}\label{setup:sections}
In the rest of \S\ref{sec: generalized rank via boundary}, we fix both $I\in\Int(\Z^2)$ and  a functor $M:I\rightarrow \vect$. Each element in $\varprojlim M_{\partial I}$  is identified with the corresponding section of $M$ along $\partial I$. Also, we identify points in  (\ref{eq:boundary cap}) and (\ref{eq:ZZ partial I}) via $\iota_I$.
\end{setup}
\end{framed}

\begin{definition}[Zigzag module along $\partial I$]\label{def:zigzag module along the boundary cap}
Define the zigzag module $M_{\partial I}:\ZZ_{\partial I}\rightarrow \vect$ by $(M_{\partial I})_x:=M_{\iota_I(x)}$ for $x\in \ZZ_{\partial I}$ and $\varphi_{M_{\partial I}}(x, y):=\varphi_{M}(\iota_I(x), \iota_I(y))$ for $x\leq y$ in $\ZZ_{\partial I}$.
\end{definition}

One of our main results is the following.

\begin{theorem}\label{thm:rank is the multiplicity of full intervals}
$\rank(M)$ is equal to the multiplicity of the full interval in $\barc(M_{\partial I})$.
\end{theorem}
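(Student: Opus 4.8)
The plan is to recast the statement as an equality of ranks of limit-to-colimit maps, and then to evaluate both maps at one carefully chosen point. By Theorem~\ref{thm:rk} applied to the zigzag module $M_{\partial I}$ over the finite connected poset $\ZZ_{\partial I}$, the multiplicity of the full interval in $\barc(M_{\partial I})$ is $\rank(M_{\partial I})=\rank(\psi_{M_{\partial I}})$, so it suffices to prove $\rank(\psi_M)=\rank(\psi_{M_{\partial I}})$. Let $p_0\in\min(I)$ and $q_0\in\max(I)$ be the elements of least $x$-coordinate, as in Remark~\ref{rem:meet and join in Z2}, so that $p_0\le q_0$, $p_0=\iota_I(\bullet_{2k+1})$, $q_0=\iota_I(\circ_1)$, and $\bullet_{2k+1}\le\circ_1$ in $\ZZ_{\partial I}$. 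By Setup~\ref{convention} we may take $\psi_M=i_{p_0}\circ\pi_{p_0}$ and $\psi_{M_{\partial I}}=i'_{\bullet_{2k+1}}\circ\pi'_{\bullet_{2k+1}}$; identifying $(M_{\partial I})_{\bullet_{2k+1}}$ with $M_{p_0}$, this gives $\rank(\psi_M)=\dim i_{p_0}(S)$ and $\rank(\psi_{M_{\partial I}})=\dim i'_{\bullet_{2k+1}}(S')$, where $S:=\pi_{p_0}(\varprojlim M)\subseteq M_{p_0}$ and $S':=\pi'_{\bullet_{2k+1}}(\varprojlim M_{\partial I})\subseteq M_{p_0}$; by Setup~\ref{setup:sections}, $S'$ is the set of $p_0$-components of sections of $M$ along $\partial I$. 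So the proof reduces to showing (i) $S=S'$ and (ii) $\ker\bigl(i_{p_0}\colon M_{p_0}\to\varinjlim M\bigr)=\ker\bigl(i'_{\bullet_{2k+1}}\colon M_{p_0}\to\varinjlim M_{\partial I}\bigr)$, since together these yield $\dim i_{p_0}(S)=\dim S-\dim(S\cap\ker i_{p_0})=\dim S'-\dim(S'\cap\ker i'_{\bullet_{2k+1}})=\dim i'_{\bullet_{2k+1}}(S')$.

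Statement (i) is the easy part. The inclusion $S\subseteq S'$ holds because restricting a global section of $M$ to the sequence of points $\partial I$ produces a section of $M$ along $\partial I$ with the same $p_0$-component, consecutive points of $\partial I$ being comparable in $\Z^2$. For $S'\subseteq S$: restrict a section of $M$ along $\partial I$ to the sub-path $\overmin$; since the order $\overmin$ inherits from $\Z^2$ coincides with its zigzag order (a routine consequence of $\min(I)$ being an antichain listed by increasing $x$-coordinate), this restriction lies in $\varprojlim M|_{\overmin}$, and as $\overmin$ is a lower fence of $I$, the section-extension map $e$ of~(\ref{eq:section extension}) extends it to a global section of $M$ with the same $p_0$-component.

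Statement (ii) is the heart of the argument, and the reverse inclusion is the step I expect to be the main obstacle. The inclusion $\ker i'_{\bullet_{2k+1}}\subseteq\ker i_{p_0}$ is formal: since $\iota_I$ is order-preserving, $\bigl((M_{\partial I})_x\xrightarrow{\,i_{\iota_I(x)}\,}\varinjlim M\bigr)_{x\in\ZZ_{\partial I}}$ is a cocone under $M_{\partial I}$, inducing $\gamma\colon\varinjlim M_{\partial I}\to\varinjlim M$ with $\gamma\circ i'_{\bullet_{2k+1}}=i_{p_0}$. For $\ker i_{p_0}\subseteq\ker i'_{\bullet_{2k+1}}$, the obstacle is that an element of $M_{p_0}$ vanishing in $\varinjlim M$ might a priori only be killed by sections of $M$ along paths through the interior of $I$ (Proposition~\ref{prop:path characterization of zero in colim}), whereas $\varinjlim M_{\partial I}$ only sees the sparse poset $\ZZ_{\partial I}$. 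The idea is to sidestep interior paths by routing through $q_0$: given $v\in M_{p_0}$ with $i_{p_0}(v)=0$, set $v':=\varphi_M(p_0,q_0)(v)$, so $i_{q_0}(v')=0$; by the upper-fence isomorphism $\varinjlim M\cong\varinjlim M|_{\overmax}$ (Proposition~\ref{prop:section extension}) the class of $v'$ in $\varinjlim M|_{\overmax}$ is $0$; and since the $\circ$-part of $\ZZ_{\partial I}$ is identified via $\iota_I$ with $\overmax$ (its inherited order again being the zigzag order), the inclusion of that sub-diagram into $\ZZ_{\partial I}$ induces a well-defined map $\varinjlim M|_{\overmax}\to\varinjlim M_{\partial I}$ sending the class of $v'$ to $0$; but that image also equals $i'_{\circ_1}(v')=i'_{\bullet_{2k+1}}(v)$, because $\bullet_{2k+1}\le\circ_1$ and $\varphi_{M_{\partial I}}(\bullet_{2k+1},\circ_1)=\varphi_M(p_0,q_0)$. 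Hence $i'_{\bullet_{2k+1}}(v)=0$.

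The remaining work is routine bookkeeping, most of it already contained in \S\ref{sec:lower and upper fences}: that $\overmin$ and $\overmax$, as subposets of $\Z^2$, carry exactly the zigzag orders of the corresponding posets (so that sections along the sub-paths are genuine sections of $M|_{\overmin}$, and so that the $\circ$-part of $\ZZ_{\partial I}$ equipped with $M_{\partial I}$ is identified with $M|_{\overmax}$), and that the fence isomorphisms of Proposition~\ref{prop:section extension} are compatible with $\pi_{p_0}$ and $i_{q_0}$ as used above.
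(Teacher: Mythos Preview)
Your proof is correct and rests on the same two ingredients as the paper's: the lower/upper fence isomorphisms of Proposition~\ref{prop:section extension} for $\overmin$ and $\overmax$, together with the observation that the order these zigzags inherit from $\Z^2$ is exactly their zigzag order. The organization differs slightly: the paper produces a factorization $\psi_{M_{\partial I}}=g\circ\xi\circ f$ with $f$ surjective, $g$ injective, and $\rank(\xi)=\rank(\psi_M)$ via diagram~(\ref{eq:diagram commutes}), whereas you work pointwise at $p_0$, proving the image equality $S=S'$ and the kernel equality $\ker i_{p_0}=\ker i'_{\bullet_{2k+1}}$. Your image argument is precisely the paper's surjectivity of $f$ unwound at the $p_0$-component, and your kernel argument (routing through $q_0$ and the fence isomorphism for $\overmax$) is the paper's injectivity of $g$ seen from the other side. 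So the two proofs are essentially the same, with yours being a more hands-on variant that trades the clean factorization picture for explicit subspace comparisons.
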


\begin{proof}
By Theorem \ref{thm:rk}, it suffices to show that
\[\rank(\psi_M:\varprojlim M \rightarrow \varinjlim M)=\rank(\psi_{M_{\partial I}}:\varprojlim M_{\partial I} \rightarrow \varinjlim M_{\partial I}).\]
Let $L:=\min_{\ZZ}(I)$ and $U:=\max_{\ZZ}(I)$ which are lower and upper fences of $I$ respectively. Let us define the maps $e,r,i$ and $\xi$ as described in the paragraph after Proposition \ref{prop:section extension}. Then, by Proposition \ref{prop:section extension} and the commutative diagram in (\ref{eq:diagram commutes}), it suffices to prove that the rank of $\xi$ equals the rank of $\psi_{M_{\partial I}}$.
To this end, we show that there exist a surjective linear map $f:\varprojlim M_{\partial I}\rightarrow \varprojlim M|_L$ and an injective linear map $g:\varinjlim M|_U \rightarrow \varinjlim M_{\partial I}$ such that $\psi_{M_{\partial I}}=g\circ \xi \circ f$. We define $f$ as the canonical section restriction $(\bv_q)_{q\in \partial I}\mapsto (\bv_{q})_{q\in L}$. We define $g$ as the canonical map, i.e. $[v_{q}]\mapsto [v_q]$ for any $q\in U$ and any $v_q\in M_q$. By Proposition \ref{prop:path characterization of zero in colim} and  by construction of $M_{\partial I}$, the map $g$ is well-defined. 

We now show that $\psi_{M_{\partial I}}=g\circ \xi \circ f$. Let $\bv:=(\bv_q)_{q\in \partial I}\in \varprojlim M_{\partial I}$. Then, by definition of $\psi_{M_{\partial I}}$ (Setup \ref{convention}), the image of $\bv$ via $\psi_{M_{\partial I}}$ is $[\bv_{q_0}]$ where $q_0\in U$ is defined as in Remark \ref{rem:meet and join in Z2} \ref{item:meet and join in Z2 2}. Also, we have \[\bv\stackrel{f}{\longmapsto} (\bv_{q})_{q\in L} \stackrel{\xi}{\longmapsto} [\bv_{q_0}] \big(\in \varinjlim M|_U) \stackrel{g}{\longmapsto} [\bv_{q_0}] (\in \varinjlim M_{\partial I}\big),\] which proves the equality $\psi_{M_{\partial I}}=g\circ \xi \circ f$.

We claim that $f$ is surjective. Let $r':\varprojlim M \rightarrow \varprojlim M_{\partial I}$ be the canonical section restriction map $(\bv_q)_{q\in I}\mapsto (\bv_{q})_{q\in \partial I}$. Then, the restriction $r:\varprojlim M \rightarrow \varprojlim M|_L$, can be seen as the composition of two restrictions $r=f\circ r'$.
 Since $r$ is the inverse of the isomorphism $e$ in diagram (\ref{eq:diagram commutes}), $r$ is surjective and thus so is $f$. 

Next we claim that $g$ is injective. Let $i':\varinjlim M_{\partial I}\rightarrow \varinjlim M$ be defined by $[v_q]\mapsto [v_q]$ for any $q\in \partial I$ and any $v_q\in M_q$.  By Proposition \ref{prop:path characterization of zero in colim} and  by construction of $M_{\partial I}$, the map $i'$ is well-defined. Then, for the isomorphism $i$ in diagram (\ref{eq:diagram commutes}), we have $i=i'\circ g$. This implies that $g$ is injective.
\end{proof}

\begin{remark}\label{rem:lower boundary cap}
In Definition \ref{def:boundary cap} one may consider the ``lower'' boundary cap $\widehat{\partial I}$, as an alternative to $\partial I$: \[\widehat{\partial I}: p_0<p_0\vee p_1>p_1<\cdots>p_k\leq q_{\ell}>q_{\ell}\wedge q_{\ell-1} <q_{\ell-1}>\cdots<q_0.\]
The value $\rank(M)$ also equals the multiplicity of the  full interval in the barcode of the  zigzag module induced over $\widehat{\partial I}$.
\end{remark}


By Theorem \ref{thm:rank is the multiplicity of full intervals}, we can utilize algorithms for zigzag persistence  in order to compute the generalized rank invariant and the generalized persistence diagram of any $\Z^2$-module that is obtained by applying the homology functor to a finite simplicial bifiltration consisting of $O(\NN)$ simplices over an index set of size $O(t)$. For this, we complete the boundary cap of a given interval to a faithful path (i.e. we put the missing monotonic paths between every pair of consecutive points) and then simply run a zigzag persistence algorithm, say the  $O(\NN^\omega)$ algorithm of Milosavljevic et al.~\cite{milosavljevic2011zigzag}, on the filtration
restricted to this path.

\begin{remark}
To compute  $\intdgm(M)(I)$ by the formula in~(\ref{eq:formula}), one needs to consider terms whose number depends exponentially on the number of neighbors of $I$.
However, for any interval that has at most $O(\log \NN)$ neighbors, we have $2^{O(\log \NN)}=\NN^c$ terms for some
constant $c>0$. It follows that using $O(\NN^{\omega})$ zigzag persistence algorithm for computing generalized ranks, we obtain
an $O(\NN^{\omega+c})$ algorithm for computing generalized persistence diagrams of intervals that have at most $O(\log \NN)$ neighbors.
\end{remark}

\section{Computing intervals and detecting interval decomposability}\label{sec:computing interval summands and detecting interval decomposability}

When a persistence module $M$ admits a summand $N$ that is isomorphic to an interval module, $N$ will be called an \textbf{interval summand} of $M$. In this section, we apply Theorem \ref{thm:rank is the multiplicity of full intervals} 
for computing generalized rank via zigzag 
to different problems that ask to find interval summands of an input finite $\Z^2$-module: Problems \ref{prob:isinterval}, \ref{prob:compute summands of interval decomposable modules}, and \ref{prob:interval decomposability}.

Let $K$ be a finite abstract simplicial complex and let $\sub(K)$ be the poset of all subcomplexes of $K$, ordered by inclusion. Given any poset $\Pb$, an order-preserving map $\Fcal:\Pb\rightarrow \sub(K)$ is called a simplicial filtration (of $K$).

\begin{framed}
\begin{setup}\label{setup}
Throughout \S\ref{sec:computing interval summands and detecting interval decomposability},  $\mathcal F$ denotes a bifiltration of a simplicial complex $K$ defined over an \emph{interval} $\Pb\in \Int(\Z^2)$. Let $t:=\max(|K|,|P|)$ denote the maximum of the number of simplices 
in $K$ and the number of points in $\Pb$.
By $M_{\mathcal F}: \Pb\rightarrow \vect$ we denote the module induced by $\mathcal F$ through  the homology functor with coefficients in the field $\F$.
\end{setup}
\end{framed}

\paragraph{Computing the dimension function.}\label{sec:setup and dimensions}

In all algorithms below, we utilize a subroutine
{\sc Dim}$({\mathcal F},\Pb)$, which computes the dimension of the vector space $(M_{\mathcal F})_p$ for every $p\in \Pb$.
\begin{proposition}\label{prop:dimension function}
{\sc Dim}$({\mathcal F},\Pb)$ can be executed in $O(\NN^3)$ time.
\label{prop:dimension}
\end{proposition}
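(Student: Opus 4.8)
The plan is to compute, for each $p \in \Pb$, the dimension of $(M_{\mathcal F})_p = \Hrm_*(\mathcal F(p); \F)$ by a straightforward application of standard persistent homology machinery, and then to bound the total cost by amortizing the boundary-matrix reductions over all $|\Pb| = O(t)$ grid points. First I would fix a global order on the simplices of $K$ that is compatible with at least one simplex-wise refinement of $\mathcal F$; since $\mathcal F$ is order-preserving into $\sub(K)$ and $K$ has $t$ simplices, such an enumeration exists and can be found in $O(t^2)$ time (e.g. topologically sort the union of all the $\mathcal F(p)$, breaking ties by dimension so that faces precede cofaces). With this order, write down the $t \times t$ boundary matrix $\partial$ of $K$ over $\F$.

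Next, for a single point $p$, the dimension $\dim (M_{\mathcal F})_p$ can be read off from a column reduction of the boundary submatrix restricted to the simplices in $\mathcal F(p)$: if $\mathcal F(p)$ has $s_p$ simplices, of which $z_p = s_p - \rank(\partial|_{\mathcal F(p)})$ are cycles (summing over all dimensions) and $b_p = \rank(\partial|_{\mathcal F(p)})$ are boundaries, then $\dim \Hrm_*(\mathcal F(p)) = z_p - b_p = s_p - 2\,\rank(\partial|_{\mathcal F(p)})$. So the task reduces to computing $\rank(\partial|_{\mathcal F(p)})$ for every $p$. A single Gaussian/column reduction of a $t \times t$ matrix over $\F$ costs $O(t^3)$, which already gives the claimed bound for one point but not for all $O(t)$ of them. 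To get $O(t^3)$ total, I would instead observe that along any monotone chain in $\Pb$ the subcomplexes are nested, so one can process the simplices in the fixed global order incrementally — this is exactly the standard persistence algorithm — maintaining the reduced matrix and, as each simplex is added, recording at which "time" (i.e. for which threshold in the filtration) it becomes positive (creates a cycle) or negative (kills one). From the resulting birth–death pairing and the list of simplices present in $\mathcal F(p)$, $\dim (M_{\mathcal F})_p$ equals the number of intervals of the (one-parameter, along a chosen linear extension) barcode that are "alive" at $p$. Since $\Pb$ is a 2-parameter grid, a single linear extension does not see all of $\Pb$ at once; but one can cover $\Pb$ by $O(t)$ monotone chains, or more simply run the incremental reduction once along a linear extension of $\Pb$ and, for each $p$, count alive classes directly — each reduction step is $O(t^2)$ work and there are $O(t)$ steps, for $O(t^3)$ total, after which answering all $|\Pb|$ queries is $O(t \cdot t) = O(t^2)$.

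The main obstacle, and the step that needs the most care, is that $(M_{\mathcal F})_p$ is indexed by a point of a $\Z^2$-grid rather than by a point on a line, so "the subcomplexes seen so far" do not form a single nested family as one sweeps the grid. The clean fix is: the dimension $\dim \Hrm_*(\mathcal F(p))$ depends only on the single complex $\mathcal F(p)$, not on any filtration, so it suffices to compute $\rank \partial|_{\mathcal F(p)}$ for each $p$ in isolation; and all these submatrices are principal submatrices of the one fixed $t \times t$ boundary matrix $\partial$ selected by the subset $\mathcal F(p)$ of columns and rows. Reducing each such submatrix from scratch is $O(t^3)$ per point and $O(t^4)$ overall, which is too slow; so one genuinely wants to share work. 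The way to do this within $O(t^3)$ is to pick a linear extension $p^{(1)} < p^{(2)} < \cdots$ of $\Pb$ — now $\mathcal F(p^{(1)}) \subseteq \mathcal F(p^{(2)}) \subseteq \cdots$ is a bona fide simplicial filtration of $K$ — run the standard persistence reduction on it once in $O(t^3)$ time, and note that for the original grid point $p = p^{(i)}$ the homology dimension is the number of persistence pairs $(\sigma_{\text{birth}}, \sigma_{\text{death}})$ with $\sigma_{\text{birth}} \in \mathcal F(p)$ and $\sigma_{\text{death}} \notin \mathcal F(p)$ (together with essential classes whose birth simplex lies in $\mathcal F(p)$), all of which is recoverable from the barcode plus, for each $p$, the indicator of which simplices lie in $\mathcal F(p)$ — an $O(t)$ check per point. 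Summing: $O(t^2)$ to build the simplex order and boundary matrix, $O(t^3)$ for the one reduction, $O(t^2)$ to answer all queries; total $O(t^3)$, as claimed. The remaining routine points — correctness of the formula $\dim \Hrm_* = s_p - 2\rank(\partial|_{\mathcal F(p)})$ and that the persistence pairing correctly tracks alive classes at each $p$ — follow from the standard theory of persistent homology and I would cite it rather than reprove it.
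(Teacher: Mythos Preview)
Your argument has a genuine gap at the key step. You claim that picking a linear extension $p^{(1)},p^{(2)},\ldots$ of the two-dimensional poset $\Pb$ yields a nested chain $\mathcal F(p^{(1)})\subseteq \mathcal F(p^{(2)})\subseteq\cdots$, but this is false: $\mathcal F$ is only order-preserving with respect to the \emph{partial} order on $\Pb$, so whenever $p^{(i)}$ and $p^{(i+1)}$ are incomparable in $\Pb$ there is no reason for $\mathcal F(p^{(i)})\subseteq\mathcal F(p^{(i+1)})$. Consequently the single $O(t^3)$ reduction you run does not produce a filtration whose level sets are the complexes $\mathcal F(p)$, and the ``count alive pairs with $\sigma_{\mathrm{birth}}\in\mathcal F(p)$ and $\sigma_{\mathrm{death}}\notin\mathcal F(p)$'' recipe is not valid for an arbitrary subcomplex that is not a prefix of the chosen simplex order. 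A small example already breaks it: take $K$ the boundary of a triangle on vertices $a,b,c$ with simplex order $a,b,c,ab,bc,ac$; the reduction pairs $(b,ab)$, $(c,bc)$ and leaves $a$ and $ac$ essential. For the subcomplex $L=\{a,c,ac\}$ your count gives $3$ (the pair $(c,bc)$ plus the two essentials $a,ac$), while $\dim\Hrm_*(L)=1$.

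You do mention the correct idea in passing---covering $\Pb$ by monotone chains---but then abandon it for the flawed shortcut. Pursuing the chain cover naively gives $O(t)$ chains each costing $O(t^3)$, i.e.\ $O(t^4)$, so one must share work between chains. This is exactly what the paper does: it roots a spanning tree of monotone paths at each minimal element of $\Pb$, traverses the tree depth-first, and at every branching node caches a copy of the partially reduced matrix so that each simplex column is reduced only once per tree. The copying costs $O(t^2)$ per node and the reductions amortize to $O(t^3)$ per tree; summing over the disjoint trees gives $O(t^3)$ total. Your proposal would need this (or an equivalent) work-sharing argument to reach the claimed bound.
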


\begin{proof}
To implement {\sc Dim},  
we maintain a set $C$ which we initialize to be the empty set. 
We consider each $p\in \min(\Pb)$ iteratively and proceed as follows. We 
reach all points $q\in p^{\uparrow}\backslash C$ over faithful paths. 
These paths form a directed tree $Q$ rooted at $p$  (where the arrows do not zigzag). We  run the standard persistence algorithm on the filtration $\mathcal F$ restricted
to $Q$ with 
a slight modification so that
the branchings in the tree are taken care of.  We traverse the tree $Q$ in a depth first manner and each time we come to a branching node, we start with the matrix which was computed during the previous visit of this node. This means that we leave a copy of the reduced matrix at each node that we traverse. 
This entire process takes time $O(\NN_p^3)$ where $\NN_p$ equals the maximum of the cardinality of $p^\uparrow\backslash C$ and the number of simplices
in the filtration over $p^\uparrow\backslash C$. This is because of the following.
At each node $q$ we spend $O(\NN_p^2)$ time to copy
a matrix of size $O(\NN_p)\times O(\NN_p)$. Also we spend $O(t_p^2s_q)$ time to
reduce $s_q$ columns in a matrix of size $O(\NN_p)\times O(\NN_p)$ corresponding to the new $s_q$ simplices introduced at the node $q$. This incurs a total cost of $\sum_q O(t_p^2)s_q=O(t_p^3)$ because $\sum_q s_q\leq  t_p$. We then update $C \leftarrow  C\cup (p^\uparrow\setminus C)$.
Taking the sum over all minimal points we see that the overall cost is of order $\sum_{p} O(\NN_p^3)=O(\NN^3)$ since $\sum_p \NN_p=\NN$. 
\end{proof}

\subsection{Detecting interval modules}
We consider the following problem. 
\begin{problem}\label{prob:isinterval}
Determine whether $M_{\mathcal{F}}$ is isomorphic to the direct sum of a certain number of copies of $\I_\Pb$ and if so, report the number of such copies.
\end{problem}

Algorithm {\sc IsInterval} solves Problem~\ref{prob:isinterval}.
 The correctness of the algorithm follows from Proposition~\ref{prop:IsInterval}. Below, for an interval
$I\in \Int(\Z^2)$ and for $m\in \Zplus$ we define $\I_I^{m}:=\underbrace{\I_I\oplus\I_I\oplus\cdots \oplus \I_I}_{m}$. In particular, $\I_I^0$ is defined to be the trivial module. Let us recall that $(M_{\mathcal F})_{\partial \Pb}$ denotes the zigzag module along the boundary cap $\partial \Pb$ (Definition \ref{def:zigzag module along the boundary cap}).\\

\noindent
{\bf Algorithm} {\sc IsInterval}($\cal F$, $\Pb$)
\begin{itemize}
    \item Step 1. Compute zigzag barcode $\barc((M_{\mathcal F})_{\partial \Pb})$ 
    and let $m$ be the multiplicity of the full interval.
    \item Step 2. Call {\sc Dim}$({\mathcal F},\Pb)$ (Computes $\dim (M_{\mathcal F})_p$ for every $p\in \Pb$)
    \item Step 3. If $\dim (M_{\mathcal F})_p==m$ for each point $p\in \Pb$ return $m$, otherwise return $0$ indicating $M_{\mathcal F}$ has a summand which is not an interval
    module supported over $\Pb$.
\end{itemize}

\begin{proposition}
Assume that a given $M:\Pb\rightarrow \vect$ has the indecomposable decomposition $M\cong\bigoplus_{i=1}^m M_i$. Then, every summand $M_i$ is isomorphic to the interval module $\I_{\Pb}$  if and only if $\intrk(M)(\Pb)=\dim M_p=m$ for all $p\in \Pb$.
\label{prop:IsInterval}
\end{proposition}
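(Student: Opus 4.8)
The plan is to prove the two directions of the equivalence separately, using the generalized rank invariant $\intrk(M)(\Pb) = \rank(M)$ (Definition \ref{def:rank}, Definition \ref{def:rank function}) as the bridge. Recall from Theorem \ref{thm:rk} that $\rank(M)$ counts precisely the number of summands $M_i$ isomorphic to $\I_\Pb$. So if all $m$ summands are isomorphic to $\I_\Pb$, then clearly $\rank(M) = m$; and since $\I_\Pb$ has dimension $1$ at every point of $\Pb$ and dimension is additive over direct sums, $\dim M_p = m$ for all $p\in \Pb$. This handles the forward direction.

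For the backward direction, suppose $\intrk(M)(\Pb) = \dim M_p = m$ for all $p\in \Pb$. First I would use Theorem \ref{thm:rk} again: $\intrk(M)(\Pb) = \rank(M) = m$ tells us that exactly $m$ of the indecomposable summands $M_1,\ldots,M_m$ are isomorphic to $\I_\Pb$. Since there are exactly $m$ summands in total, \emph{after reindexing} we may assume $M_i \cong \I_\Pb$ for $i = 1,\ldots,m'$ and $M_i \not\cong \I_\Pb$ for $i > m'$, and the count forces $m' = m$ provided we can rule out the degenerate possibility that some of the ``$m$'' summands isomorphic to $\I_\Pb$ are being double-counted — but this cannot happen since the $M_i$ in a Krull--Schmidt decomposition are a genuine multiset of $m$ modules, so $m'\le m$, and $\rank(M)=m$ forces $m'=m$. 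Hence every $M_i$ is isomorphic to $\I_\Pb$, as desired. Actually the dimension hypothesis is not even needed for this direction; it is the forward direction that produces it, and Step 3 of {\sc IsInterval} uses it as a \emph{verification} that no non-interval summand slipped through.

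Let me restructure to make the logic of the algorithm's correctness transparent, since that is what the proposition is really certifying. The cleanest route: (i) $\rank(M) = m_0$ where $m_0 := \intrk(M)(\Pb)$ equals the multiplicity of the full interval in $\barc((M_{\mathcal F})_{\partial \Pb})$ by Theorem \ref{thm:rank is the multiplicity of full intervals}; (ii) by Theorem \ref{thm:rk}, among the indecomposables $M_1,\ldots,M_m$ exactly $m_0$ are isomorphic to $\I_\Pb$; (iii) each of those $m_0$ copies contributes $1$ to $\dim M_p$ at every $p$, while each of the remaining $m - m_0$ summands contributes a nonnegative integer, and contributes $0$ at some point $p$ only if it is not supported everywhere on $\Pb$; (iv) therefore $\dim M_p = m_0$ for all $p$ if and only if each of the remaining summands has dimension exactly $1$ at every point, i.e. is an interval module supported on all of $\Pb$ — but an interval of $\Z^2$ on which a module is $1$-dimensional everywhere and which equals $\Pb$ as a set is exactly $\I_\Pb$ (here we use that $\Pb$ is itself an interval, Setup \ref{setup}). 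Combining, $\dim M_p = m_0 = m$ for all $p$ iff $m - m_0 = 0$, i.e. iff all summands are $\I_\Pb$.

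The main obstacle is step (iv): one must argue that a summand $M_i$ which is \emph{not} isomorphic to $\I_\Pb$ must drop below full dimension \emph{somewhere}, i.e. cannot be a ``thick'' module of constant dimension $\ge 2$ on all of $\Pb$ without being detected. But this is automatic from the dimension count: if all $m$ summands had dimension $\ge 1$ everywhere and $\dim M_p = m$, then each has dimension exactly $1$ everywhere; a summand with $M_i$ of dimension $1$ at every point of the connected poset $\Pb$, with all structure maps between nonzero $1$-dimensional spaces forced to be isomorphisms (a nonzero endomorphism-free indecomposable over a connected poset of constant dimension $1$ has all maps invertible), is isomorphic to $\I_\Pb$. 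So the only real content beyond bookkeeping is this last elementary observation about constant-dimension-$1$ indecomposables, which I would dispatch in a sentence. No delicate estimates are involved; the proof is essentially an unwinding of Theorems \ref{thm:rk} and \ref{thm:rank is the multiplicity of full intervals} together with additivity of $\dim$.
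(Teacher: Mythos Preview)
Your first two paragraphs give a correct proof. The forward direction matches the paper. For the backward direction you observe that $\intrk(M)(\Pb)=m$ together with Theorem~\ref{thm:rk} immediately forces all $m$ indecomposable summands to be isomorphic to $\I_\Pb$, so the dimension hypothesis is redundant here. The paper takes a slightly different route: from $\intrk(M)(\Pb)=m$ and Theorem~\ref{thm:rk} it writes $M\cong \I_\Pb^m\oplus M'$, then uses $\dim M_p=m$ to deduce $\dim M'_p=0$ for every $p$, whence $M'=0$. Your argument is shorter; the paper's has the advantage of not needing to know the total number of indecomposable summands in advance, which more closely mirrors how the algorithm \textsc{IsInterval} actually reasons (it computes $m_0$ and checks $\dim M_p=m_0$ without any prior knowledge of the Krull--Schmidt decomposition).

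Your ``restructuring'' in the third and fourth paragraphs, however, contains a genuine error. Step~(iv) asserts that $\dim M_p=m_0$ holds iff each of the remaining $m-m_0$ summands has dimension exactly~$1$ at every point. This is wrong: since the $m_0$ copies of $\I_\Pb$ already contribute $m_0$ to $\dim M_p$, the condition $\dim M_p=m_0$ forces the remaining summands to have dimension~$0$ everywhere, hence to be trivial --- which is exactly the paper's argument. Your follow-up claim that an indecomposable of constant pointwise dimension~$1$ over a connected poset must be $\I_\Pb$ is also false in general: over the four-element poset with two incomparable minima $a,b$ each below two incomparable maxima $c,d$, the module with all spaces $\F$ and structure maps $a\to c,\,a\to d,\,b\to c$ equal to $\id$ but $b\to d$ equal to $-\id$ is indecomposable (its endomorphism ring is $\F$), has all structure maps invertible, yet is not isomorphic to $\I_\Pb$ when $\mathrm{char}\,\F\neq 2$. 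Drop the restructuring and keep your second-paragraph argument.
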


\begin{proof}By Theorem \ref{thm:rk}, the forward direction is straightforward. Let us show the backward direction. Since $\intrk(M)(\Pb)=m$, by Theorem \ref{thm:rk}, we have that 
$M\cong \I_{\Pb}^m \oplus M'$ 
where $M'$ admits no summand that is isomorphic to $\I_\Pb$. Fix any $p\in \Pb$. Then, $m=\dim(M_p)=\dim ((\I_{\Pb}^m )_p)+\dim(M'_p)=m+\dim(M'_p)$, and hence $\dim(M'_p)=0$. Since $p$ was chosen arbitrarily, $M'$ must be trivial.
\end{proof}

\begin{proposition}\label{prop:IsInverval complexity}
Algorithm {\sc IsInterval} can be run in $O(\NN^3)$ time. 
\end{proposition}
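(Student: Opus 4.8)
The plan is to bound the running time of each of the three steps of Algorithm {\sc IsInterval} separately and then add them up. The dominant cost will turn out to be the zigzag barcode computation in Step~1, which I claim also runs in $O(t^3)$ time; the remaining two steps are easily seen to fit within this bound.

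First I would handle Step~1. The boundary cap $\partial \Pb$ is a path in $\Pb\in\Int(\Z^2)$; by Definition~\ref{def:boundary cap} its length is $2k+2\ell+2$ where $\min(\Pb)$ and $\max(\Pb)$ have $k+1$ and $\ell+1$ elements respectively, so in particular $|\partial\Pb|=O(t)$. To actually compute $\barc((M_{\mathcal F})_{\partial\Pb})$ I would, as described in the paragraph following Remark~\ref{rem:lower boundary cap}, complete $\partial\Pb$ to a faithful path by inserting the missing monotone segments between each pair of consecutive vertices of $\partial\Pb$; since $\Pb$ has $O(t)$ points and all inserted points lie in $\Pb$, the completed faithful path still visits $O(t)$ grid points, and the simplicial filtration restricted to this path has $O(t)$ simplices (each simplex of $K$ enters at most once along a faithful path). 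Running the zigzag persistence algorithm of Milosavljevic et al.~\cite{milosavljevic2011zigzag}, which costs $O(t^\omega)$ on a zigzag filtration of size $O(t)$, gives the barcode — and in particular the multiplicity $m$ of the full interval — in time $O(t^\omega)\subseteq O(t^3)$. (One could equally invoke the classical $O(t^3)$ zigzag algorithm to get the $O(t^3)$ bound directly without appealing to fast matrix multiplication.)

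Next, Step~2 is a single call to {\sc Dim}$(\mathcal F,\Pb)$, which by Proposition~\ref{prop:dimension function} runs in $O(t^3)$ time and returns $\dim (M_{\mathcal F})_p$ for every $p\in\Pb$. Step~3 then simply compares the integer $m$ against each of the $|\Pb|=O(t)$ values $\dim(M_{\mathcal F})_p$ and returns accordingly; this is $O(t)$ work, negligible compared to the other steps.

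Adding the three contributions gives a total running time of $O(t^\omega)+O(t^3)+O(t)=O(t^3)$, which is the claimed bound. The main obstacle — really the only nonroutine point — is justifying that completing the boundary cap to a faithful path does not blow up the size of the restricted filtration beyond $O(t)$: one must observe that the inserted monotone segments stay inside $\Pb$ (so their vertices are among the $O(t)$ grid points) and that along a faithful path each simplex of $K$ is added at most once, so the restricted zigzag filtration has $O(|K|+|\Pb|)=O(t)$ bars of input; everything else is a direct appeal to Proposition~\ref{prop:dimension function} and to the known complexity of zigzag persistence.
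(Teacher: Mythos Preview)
Your proof is correct and follows essentially the same three-step analysis as the paper's: bound Step~1 by $O(t^\omega)$ via the zigzag algorithm of~\cite{milosavljevic2011zigzag}, Step~2 by $O(t^3)$ via Proposition~\ref{prop:dimension}, and Step~3 by $O(t)$, then sum. One small inaccuracy in your opening sentence: the dominant term is actually Step~2 ({\sc Dim}), not Step~1, since $\omega<3$; your final tally $O(t^\omega)+O(t^3)+O(t)=O(t^3)$ gets this right, so this is only a presentational slip. Your parenthetical ``each simplex of $K$ enters at most once along a faithful path'' is a bit too strong as stated (along the zigzagging boundary cap a simplex can in principle enter and exit more than once), but the paper makes the same $O(t)$-size claim with no further justification, so your level of detail matches the original.
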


\begin{proof}
As we commented earlier, Step 1 computing the  zigzag barcode $\barc((M_{\Fcal})_{\partial \Pb})$ can be implemented to run in $O(\NN^\omega)$ time because
it runs on a filtration comprising $O(\NN)$ simplices over an index set of size $O(\NN)$ . 
Step 2 takes $O(\NN^3)$ time (Proposition~\ref{prop:dimension}).
Step 3 takes only time $O(\NN)$. 
This implies that the overall complexity is $O(\NN^3)$ given that $\omega<2.373$.
\end{proof}

\subsection{Interval decomposable modules and its summands}\label{sec:Interval decomposable modules and its summands}
Setup \ref{setup} still applies in \S\ref{sec:Interval decomposable modules and its summands}. Next, we consider the problem of computing all indecomposable summands of $M_\Fcal$ \emph{under the assumption that $M_\Fcal$ is interval decomposable} (Definition \ref{def:interval decomposable}). 

\begin{problem}\label{prob:compute summands of interval decomposable modules}
Assume that $M_{\Fcal}:\Pb\rightarrow \vect$ is interval decomposable. Find $\barc(M_{\Fcal})$.
\label{prob:intervaldecomp}
\end{problem}
We present algorithm {\sc Interval} to solve Problem~\ref{prob:intervaldecomp} in $O(\NN^{\omega+2})$ time. This
algorithm is eventually used to detect whether a given module is interval decomposable or not (Problem \ref{prob:interval decomposability}).
Before describing {\sc Interval}, we first describe another algorithm {\sc TrueInterval}. The outcomes of both {\sc Interval} and {\sc TrueInterval} are the same as the barcode of $M_\Fcal$ in Problem \ref{prob:intervaldecomp} (Propositions \ref{prop:intervals} and \ref{prop:two algorithm coincides}). Whereas {\sc TrueInterval} is more intuitive, real implementation is accomplished via {\sc Interval}.

\begin{definition}\label{def:maximal support interval}
Let $\mathcal{I}(M_{\Fcal}):=\{I\in\Int(\Pb):\,\intrk(M_{\Fcal})(I)>0\}$. We call $I\in\mathcal{I}(M_{\Fcal})$  \textbf{maximal} if there is no $J\supsetneq I$ in $\Int(\Pb)$ such that $\intrk(M_{\Fcal})(J)$ is nonzero.
\end{definition}

\begin{proposition}\label{prop:maximal intervals}\label{prop:maxI}
Assume that $M_\Fcal$ is interval decomposable and let $I\in \mathcal{I}(M_{\Fcal})$ be maximal. Then, $I$ belongs to $\barc(M_{\Fcal})$ and the multiplicity of $I$ in $\barc(M_{\Fcal})$ is equal to $\intrk(M_{\Fcal})(I)$.
\end{proposition}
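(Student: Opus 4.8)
The plan is to use the interval-decomposability of $M_\Fcal$ together with Theorem~\ref{thm:dgm generalizes barc} to translate everything into statements about the generalized persistence diagram, and then to analyze the M\"obius-inversion formula at a maximal interval. Since $M_\Fcal$ is interval decomposable, write $M_\Fcal\cong\bigoplus_{j\in\mathcal J}\I_{I_j}$ and let $\mu_J$ denote the multiplicity of $J$ in $\barc(M_\Fcal)$ for each $J\in\Int(\Pb)$. By Theorem~\ref{thm:dgm generalizes barc} we have $\intdgm(M_\Fcal)(J)=\mu_J\geq 0$ for all $J$, and by Definition~\ref{def:IGPD}, $\intrk(M_\Fcal)(I)=\sum_{J\supseteq I,\,J\in\Int(\Pb)}\mu_J$.

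First I would observe that, by the maximality of $I$ in $\mathcal I(M_\Fcal)$, every $J\in\Int(\Pb)$ with $J\supsetneq I$ has $\intrk(M_\Fcal)(J)=0$; since $\intrk(M_\Fcal)(J)=\sum_{J'\supseteq J}\mu_{J'}\geq\mu_J\geq 0$, this forces $\mu_J=0$ for every such $J$. Hence in the sum $\intrk(M_\Fcal)(I)=\sum_{J\supseteq I}\mu_J$, the only possibly-nonzero term is $J=I$ itself, which gives $\intrk(M_\Fcal)(I)=\mu_I$. Since $I\in\mathcal I(M_\Fcal)$ means $\intrk(M_\Fcal)(I)>0$, we conclude $\mu_I=\intrk(M_\Fcal)(I)>0$, so $I\in\barc(M_\Fcal)$ with multiplicity exactly $\intrk(M_\Fcal)(I)$, as desired.

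The key steps, in order, are: (1) invoke Theorem~\ref{thm:dgm generalizes barc} to identify $\intdgm(M_\Fcal)(J)$ with the multiplicity $\mu_J$, hence ensure all these values are nonnegative; (2) unwind Definition~\ref{def:IGPD} to express $\intrk(M_\Fcal)(I)$ as a sum of multiplicities $\mu_J$ over intervals $J\supseteq I$; (3) use maximality to kill every term with $J\supsetneq I$, using that each $\mu_J$ is a summand of the nonnegative quantity $\intrk(M_\Fcal)(J)=0$; (4) read off $\intrk(M_\Fcal)(I)=\mu_I>0$. The only mild subtlety — and the step I would be most careful about — is step (3): one must be sure the relevant sums are finite (which holds because $\Pb$ is finite, so $\Int(\Pb)$ is finite) and that nonnegativity of $\intdgm$ really does let one conclude $\mu_J=0$ from the vanishing of a larger sum of nonnegative terms; this is exactly where interval-decomposability is essential, since for general modules $\intdgm$ can take negative values and the argument would collapse.
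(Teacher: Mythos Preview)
Your argument is correct. It differs from the paper's proof only in which direction of the M\"obius inversion is invoked. The paper applies the explicit inclusion--exclusion formula~(\ref{eq:formula}) for $\intdgm(M_\Fcal)(I)$: by maximality of $I$, every term $\intrk(M_\Fcal)\big(\overline{A\cup I}\big)$ with $\emptyset\neq A\subseteq\intnbd(I)\cap\Pb$ vanishes, so $\intdgm(M_\Fcal)(I)=\intrk(M_\Fcal)(I)$, and then Theorem~\ref{thm:dgm generalizes barc} identifies this with the barcode multiplicity. You instead start from the defining relation $\intrk(M_\Fcal)(I)=\sum_{J\supseteq I}\intdgm(M_\Fcal)(J)$ of Definition~\ref{def:IGPD}, use Theorem~\ref{thm:dgm generalizes barc} to replace $\intdgm$ by the (nonnegative) multiplicities $\mu_J$, and kill the $J\supsetneq I$ terms via maximality plus nonnegativity. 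Your route has the minor advantage of not needing the explicit formula~(\ref{eq:formula}); the paper's route has the minor advantage of not needing to argue separately that $\mu_J=0$ for each $J\supsetneq I$. In substance the two proofs are the same idea viewed from opposite sides of the M\"obius inversion.
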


\begin{proof}
By assumption, all summands in the sum $\displaystyle \sum_{\substack{A\subseteq \nbd_\mathbb{I}(I)\cap P\\A\neq\emptyset}}(-1)^{\abs{A}}\intrk(M_{\mathcal F})\bigg(\overline{I\cup A}\bigg)$ corresponding to the second term  of (\ref{eq:formula}) are zero. Hence, $\intdgm(M_{\mathcal F})(I)=\intrk(M_{\mathcal F})(I)>0$. Since $M_{\mathcal F}$ is interval decomposable, by Theorem \ref{thm:dgm generalizes barc}, $\intdgm(M_{\mathcal F})(I)$ is equal to the multiplicity of $I$ in $\barc(M_{\mathcal F})$. Therefore, not only does $I$ belong to $\barc(M_{\mathcal F})$, but also the value $\intrk(M_{\mathcal F})(I)$ is equal to the multiplicity of $I$ in $\barc(M_{\mathcal F})$. 
\end{proof}

The following proposition is a corollary of Proposition \ref{prop:maximal intervals}. 
\begin{proposition}\label{prop:MaxI}
Assume that $M_\Fcal$ is interval decomposable and let $I\in \mathcal{I}(M_{\Fcal})$ be maximal. Let  $\mu_I:=\intrk(M_{\Fcal})(I)$. Then,
$M_\Fcal$ admits a summand $N$ which is isomorphic to $\I_I^{\mu_I}$. 
\end{proposition}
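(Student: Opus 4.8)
The statement to prove is Proposition~\ref{prop:MaxI}, which asserts that if $M_\Fcal$ is interval decomposable and $I\in\mathcal{I}(M_\Fcal)$ is maximal, then $M_\Fcal$ admits a summand isomorphic to $\I_I^{\mu_I}$ with $\mu_I=\intrk(M_\Fcal)(I)$. The plan is to extract this directly from Proposition~\ref{prop:maximal intervals} together with the Azumaya--Krull--Remak--Schmidt theorem. First I would invoke Proposition~\ref{prop:maximal intervals}: since $I$ is maximal in $\mathcal{I}(M_\Fcal)$ and $M_\Fcal$ is interval decomposable, the multiplicity of $I$ in $\barc(M_\Fcal)$ is exactly $\mu_I=\intrk(M_\Fcal)(I)$. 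By Definition~\ref{def:interval decomposable}, this means there is an isomorphism $M_\Fcal\cong\bigoplus_{j\in\mathcal{J}}\I_{I_j}$ in which the index $j$ with $I_j=I$ occurs precisely $\mu_I$ times.

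\textbf{Key steps.} Fix such a decomposition $M_\Fcal\cong\bigoplus_{j\in\mathcal{J}}\I_{I_j}$. Partition $\mathcal{J}=\mathcal{J}_0\sqcup\mathcal{J}_1$ where $\mathcal{J}_0=\{j\in\mathcal{J}:I_j=I\}$ (so $|\mathcal{J}_0|=\mu_I$) and $\mathcal{J}_1$ is its complement. Then $\bigoplus_{j\in\mathcal{J}_0}\I_{I_j}\cong\I_I^{\mu_I}$, and the isomorphism $M_\Fcal\cong\big(\bigoplus_{j\in\mathcal{J}_0}\I_{I_j}\big)\oplus\big(\bigoplus_{j\in\mathcal{J}_1}\I_{I_j}\big)$ exhibits $\I_I^{\mu_I}$ as (isomorphic to) a summand of $M_\Fcal$. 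To be careful about the internal-versus-external distinction in Definition~\ref{def:submodules and summands}: transporting the internal direct-sum structure of $\bigoplus_{j\in\mathcal{J}}\I_{I_j}$ through the isomorphism to $M_\Fcal$ produces an honest internal decomposition $M_\Fcal\cong N\oplus N'$ with $N$ the image of $\bigoplus_{j\in\mathcal{J}_0}\I_{I_j}$, and $N\cong\I_I^{\mu_I}$ as desired. This is the entire argument; it is essentially bookkeeping on top of Proposition~\ref{prop:maximal intervals}.

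\textbf{Main obstacle.} There is no real mathematical obstacle here — the proposition is explicitly labeled a corollary of Proposition~\ref{prop:maximal intervals} — so the only thing to get right is the formal passage from ``$I$ has multiplicity $\mu_I$ in $\barc(M_\Fcal)$'' to ``$M_\Fcal$ has an internal summand isomorphic to $\I_I^{\mu_I}$.'' This is precisely the kind of regrouping of summands that the Azumaya--Krull--Remak--Schmidt theorem legitimizes, and it is the same move already used in the proof of Proposition~\ref{prop:quotienting does not change interver decomposabilty}. So the proof reduces to two sentences: cite Proposition~\ref{prop:maximal intervals} for the multiplicity, then regroup the interval summands of $\barc(M_\Fcal)$ into the $\mu_I$ copies of $\I_I$ and the rest.
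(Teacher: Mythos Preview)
Your proposal is correct and matches the paper's approach exactly: the paper simply states that Proposition~\ref{prop:MaxI} is a corollary of Proposition~\ref{prop:maximal intervals} without further argument, and your write-up just makes explicit the trivial regrouping of summands that this entails.
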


Let us now describe a procedure {\sc TrueInterval} that outputs all
indecomposable summands of a given interval decomposable module. For computational
efficiency, we will implement {\sc TrueInterval} differently. Let $M:=M_{\mathcal F}$ . First we compute $\dim M_p$ for every
point $p\in \Pb$. Iteratively, we choose a point $p$ with $\dim M_p\not= 0$
and compute a maximal interval $I\in\mathcal{I}(M)$ containing $p$. 
Since $M$ is interval decomposable, by Propositions~\ref{prop:maxI} and \ref{prop:MaxI} we have that $I\in \barc(M)$ and that there is a summand $N\cong \I_{I}^{\mu_I}$ of $M$. Consider the quotient module $M':= M/N$. Clearly, this `peeling off' of
$N$ reduces the total dimension of the input module. Namely,  $\dim M'_p= \begin{cases}
\dim M_p-\mu_I,&p\in I \\ \dim M_p,&p\notin I. \end{cases}$ We
continue the process by replacing $M$ with $M'$ until
there is no point $p\in \Pb$ with $\dim M_p\not = 0$ (note that $M'$ is interval decomposable by Proposition \ref{prop:quotienting does not change interver decomposabilty}). Since $\dim M:=\sum_{p\in \Pb}\dim M_p$ is finite, this process terminates in finitely many steps. By Propositions  \ref{prop:quotienting does not change interver decomposabilty} and \ref{prop:MaxI}, the outcome of {\sc TrueInterval} is a list of all intervals in $\barc(M)$ with
accurate multiplicities: 

\begin{proposition}\label{prop:intervals}
Assume that $M_{\Fcal}$ is interval decomposable. Let $I_i$, $i=1,\ldots, k$ be the intervals computed by {\sc TrueInterval}. For each $i=1,\ldots, k$,
let $\mu_{I_i}:=\intrk(M_\Fcal)({I_i})$.
Then, we have $M_\Fcal \cong \bigoplus_{i=1}^k \I_{I_i}^{\mu_{I_i}}$.
\end{proposition}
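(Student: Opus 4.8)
The plan is to verify that {\sc TrueInterval} terminates and that its successive ``peeling off'' steps exactly recover the barcode of $M_\Fcal$, reasoning by induction on the total dimension $\dim M:=\sum_{p\in \Pb}\dim M_p$. First I would set up the induction: if $\dim M=0$, then $M_\Fcal$ is the trivial module, {\sc TrueInterval} outputs the empty list, and the claimed isomorphism holds vacuously (empty direct sum). For the inductive step, suppose $\dim M>0$, so there is a point $p$ with $\dim M_p\neq 0$; since $M$ is interval decomposable and $\dim M_p\neq 0$, the point $p$ lies in the support of some interval summand, hence $\intrk(M)(\{p\})>0$ (or more simply, $p$ is contained in some $I\in\mathcal I(M)$), and among the intervals in $\mathcal I(M)$ containing $p$ there is a maximal one $I_1$ because $\Int(\Pb)$ is finite. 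Invoking Propositions~\ref{prop:maxI} and \ref{prop:MaxI}, we get $I_1\in\barc(M)$ with multiplicity $\mu_{I_1}=\intrk(M_\Fcal)(I_1)$, and a summand $N\cong\I_{I_1}^{\mu_{I_1}}$ of $M$.

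Next I would analyze the quotient $M':=M/N$. By Proposition~\ref{prop:quotienting does not change interver decomposabilty}, $M'$ is interval decomposable, so the inductive hypothesis applies to it provided $\dim M'<\dim M$. The dimension drop is exactly $\dim M - \dim M' = |I_1|\cdot \mu_{I_1} > 0$, using the pointwise formula $\dim M'_p = \dim M_p - \mu_{I_1}$ for $p\in I_1$ and $\dim M'_p=\dim M_p$ otherwise, which follows because quotients are computed pointwise and $N_p\cong\F^{\mu_{I_1}}$ exactly when $p\in I_1$. Hence the process strictly decreases a nonnegative integer and terminates; moreover, since $M\cong N\oplus M'$ (this internal splitting is already established inside the proof of Proposition~\ref{prop:quotienting does not change interver decomposabilty}, where it is shown $M\cong N\oplus (M/N)$), we get $M\cong \I_{I_1}^{\mu_{I_1}}\oplus M'$. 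Applying the inductive hypothesis to $M'$, whose {\sc TrueInterval} run produces intervals $I_2,\ldots,I_k$ with multiplicities $\mu_{I_i}=\intrk(M')(I_i)$ and $M'\cong\bigoplus_{i=2}^k\I_{I_i}^{\mu_{I_i}}$, we conclude $M\cong\bigoplus_{i=1}^k\I_{I_i}^{\mu_{I_i}}$.

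One subtlety to tie up is that the multiplicities reported by {\sc TrueInterval} when run on $M$ should be $\intrk(M_\Fcal)(I_i)$, i.e. computed against the \emph{original} module, not against the intermediate quotients. For $I_1$ this is immediate. For $i\geq 2$, $I_i$ is computed as a maximal element of $\mathcal I(M')$, so its reported multiplicity is $\intrk(M')(I_i)$; I would argue this equals $\intrk(M_\Fcal)(I_i)$ by combining the already-established isomorphism $M\cong\bigoplus_{i=1}^k\I_{I_i}^{\mu_{I_i}}$ with Theorem~\ref{thm:dgm generalizes barc} and the definition of $\intrk$: once we know the full barcode of $M$, the generalized rank over any interval $J$ is $\sum_{I\in\barc(M),\,I\supseteq J}(\text{mult of }I)$ by Theorem~\ref{thm:rk} applied to $M|_J$ (as in the proof of Theorem~\ref{thm:dgm generalizes barc}), which is a function of the barcode alone and hence insensitive to which intermediate quotient was used. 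The main obstacle I anticipate is precisely this bookkeeping — keeping straight which module each $\intrk$ is taken against and confirming that ``maximal in $\mathcal I(M')$'' yields an interval whose original multiplicity is captured correctly — rather than any deep new idea, since all the heavy lifting (Propositions~\ref{prop:maxI}, \ref{prop:MaxI}, \ref{prop:quotienting does not change interver decomposabilty} and Theorem~\ref{thm:dgm generalizes barc}) is already in place.
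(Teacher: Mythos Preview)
Your inductive argument on $\dim M$ is sound and is essentially the paper's own justification, which is the sentence immediately preceding the proposition (invoking Propositions~\ref{prop:quotienting does not change interver decomposabilty} and~\ref{prop:MaxI}). The gap is in your final paragraph. The claim that $\intrk(M')(I_i)=\intrk(M_\Fcal)(I_i)$ for $i\ge 2$ is false, and your argument that the rank is ``insensitive to which intermediate quotient was used'' cannot work: the formula $\intrk(N)(J)=\sum_{I\in\barc(N),\,I\supseteq J}\mathrm{mult}(I)$ is a function of $\barc(N)$, and that barcode genuinely changes when you pass from $M_\Fcal$ to $M'$. Concretely, in Example~\ref{ex:intervals} one has $M_\Fcal\cong\I_{I_1}\oplus\I_{I_2}\oplus\I_{I_3}$ with $I_1\supsetneq I_2\supsetneq I_3$; after peeling off $I_1$ we get $M'\cong\I_{I_2}\oplus\I_{I_3}$, so $\intrk(M')(I_2)=1$ while $\intrk(M_\Fcal)(I_2)=2$.

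In fact this shows the proposition, read literally with $\mu_{I_i}:=\intrk(M_\Fcal)(I_i)$, is not quite correct as stated: in the example it would give $M_\Fcal\cong\I_{I_1}\oplus\I_{I_2}^{2}\oplus\I_{I_3}^{3}$, which has the wrong pointwise dimensions. The intended meaning---consistent both with the text preceding the proposition (``accurate multiplicities'') and with Step~2.3 of {\sc Interval} (where the output multiplicity is $\intrk(M_{\mathcal F})(I)-c$, not $\intrk(M_{\mathcal F})(I)$)---is that $\mu_{I_i}$ denotes the multiplicity that {\sc TrueInterval} actually outputs at step $i$, i.e.\ the generalized rank of the \emph{current} quotient over $I_i$; by Proposition~\ref{prop:maxI} this equals the multiplicity of $I_i$ in $\barc(M_\Fcal)$. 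For that corrected reading your induction is already complete, and the ``subtlety'' you flagged is a misprint in the statement rather than a gap in the argument.
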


Next, we describe an algorithm {\sc Interval}
that simulates {\sc TrueInterval}
while avoiding explicit quotienting of $M_{\mathcal F}$ by its summands. 

We associate a number $d(p)$ and a list ${\tt list}(p)$ of identifiers of intervals $I\subseteq P$ to each point $p\in \Pb$. The number $d(p)$ equals the original
dimension of $(M_{\mathcal F})_p$ minus the
number of intervals peeled off so far (counted with their multiplicities) 
which contained $p$.
It is initialized to $\dim (M_{\mathcal F})_p$.
Each time we compute
a maximal interval $I\in \mathcal{I}(M_\Fcal)$ with multiplicity $\mu_I$ that contains $p$, we update 
$d(p):=d(p)-\mu_I$ keeping track of how many more intervals containing $p$ would
{\sc TrueInterval}  still be peeling off.

With each interval $I$ that is output, we associate an identifier 
$\mathrm{id}(I)$.
The variable ${\tt list}(p)$ maintains the set of identifiers of the intervals
containing $p$ that have been output so far. While searching for a maximal
interval $I$, we maintain a variable ${\tt list}$ for $I$
that contains the set of identifiers common to all points in $I$. Initializing
${\tt list}$ with ${\tt list}(p)$ of the initial point $p$,
we update it as we explore expanding $I$.
Every time we augment $I$ with a new point
$q$, we update ${\tt list}$ by taking its intersection with the
set of identifiers ${\tt list}(q)$ associated with $q$.

We assume a routine {\sc Count} that takes a list as input and gives the total
number of intervals counted with their multiplicities whose identifiers are in the
list. This means that if ${\tt list}=\{\mathrm{id}(I_1),\ldots,\mathrm{id}(I_k)\}$,
then {\sc Count}($\tt list$) returns the number $c:=\sum \mu_{I_1}+\cdots+\mu_{I_k}$. 

Notice that, while searching for a
maximal interval starting from a point, we keep considering the original
given module $M_{\mathcal F}$ since we do not implement the true `peeling' (i.e. quotient $M_{\Fcal}$ by a submodule).
However, we modify the condition for checking the maximality of an interval $I$.
We check whether 
$\intrk(M_{\Fcal})(I)>c$, that is, whether the generalized
rank of $M_{\Fcal}$ over $I$ is larger than the total number of
intervals containing $I$ that would have been peeled off so far by {\sc TrueInterval}.
This idea is implemented in the following algorithm.\\

\noindent
{\bf Algorithm} {\sc Interval} ($\mathcal F$, $\Pb$)\label{algorithm:interval}
\begin{itemize}
    \item Step 1. Call {\sc Dim}($\mathcal F$,$\Pb$) 
    and set $d(p):=\dim (M_{\mathcal F})_p$; ${\tt list}(p):=\emptyset$ for every $p\in \Pb$
    \item Step 2. While there exists a $p\in \Pb$ with $d(p)> 0$ do
    \begin{itemize}
        \item Step 2.1 Let $I:=\{p\}$; ${\tt list}:={\tt list}(p)$; unmark every $q\in \Pb$
        \item Step 2.2 If there exists unmarked $q\in \intnbd(I)$  then 
        \begin{enumerate}
            \item[i.] ${\tt templist}:={\tt list}\cap {\tt list}(q)$; $c:=${\sc Count}($\tt templist$)
           \item[ii.] If $\intrk(M_{\mathcal F})(I\cup\{q\})>c$ then\footnote{to check $\intrk(M_{\mathcal F})(I\cup\{q\})>c$, we invoke Theorem \ref{thm:rank is the multiplicity of full intervals} and run the zigzag persistence algorithm described beneath Remark \ref{rem:lower boundary cap}. For efficiency, one can use zigzag update algorithm in~\cite{DH21}.} 
             mark $q$; set $I:=I\cup \{q\}$; ${\tt list}:={\tt list}\cap {\tt list}(q)$ 
            \item[iii.] go to Step 2.2
        \end{enumerate}
        \item Step 2.3 Output $I$ with multiplicity $\mu_I:=\intrk(M_{\mathcal F})(I)-c$
        \item Step 2.4 For every $q\in I$ set $d(q):=d(q)-\mu_I$ and 
        ${\tt list}(q):={\tt list}(q)\cup \{\mathrm{id}(I)\}$
    \end{itemize}
\end{itemize}
The output of {\sc Interval} can be succinctly described as:\\
{\bf Output:} $\{(I_i,\mu_{I_i}):i=1,\ldots,k\}$ where $I_i\in\Int(\Pb)$ and $\mu_i$ is a positive integer for each $i$.

\begin{remark} For each $p\in \Pb$, $\dim M_p$ coincides with $\sum_{I_i\ni p} \mu_i$. 
\end{remark}

We will show that if $M_\Fcal$ is interval decomposable, then the output of {\sc Interval} coincides with the barcode 
of $M_{\Fcal}$ (Propositions \ref{prop:intervals} and \ref{prop:two algorithm coincides}). 

\begin{example}[{\sc Interval} with interval decomposable input]\label{ex:intervals}
Suppose that $M_{\Fcal}\cong \I_{I_1}\oplus \I_{I_2}\oplus\I_{I_3}$ as depicted in Figure \ref{fig:example_algorithm1} (A). The algorithm {\sc Interval} yields $\{(I_1,1),(I_2,1),(I_3,1)\}$. In particular, since $I_1\supset I_2 \supset I_3$, {\sc Interval} outputs $(I_1,1)$, $(I_2,1)$, and $(I_3,1)$ in order, as depicted in Figure \ref{fig:example_algorithm2} (A).
\end{example}

\begin{proof}[Details for Example \ref{ex:intervals}]
We illustrate how the barcode of $M_{\Fcal}$ is obtained as the output of {\sc Interval}. In Step 1, for $p=(2,2)$, we have $d(p)=3$ and ${\tt list}(p)=\emptyset$. For the first round of the while loop in Step 2, note that $q=(2,1)$ belongs to $\intnbd(\{p\})$ and thus $q$ can be added to $I=\{p\}$. After several rounds of the while loop, we obtain $I_1$ with $\mu_{I_1}=1$, and for every $q\in I_1$ (including $p=(2,2)$), $d(q)$ is decreased by $1$. Next, suppose that another search begins at $I=\{p\}$ and
it tries to include $q=(2,1)$. We obtain $\intrk(M_{\Fcal})(I\cup\{q\})=1$ as in the previous round, but now {\sc Count}$({\tt list}(p)\cap {\tt list}(q))$ also returns $1$ in Step 2.2(i) because the previously detected interval $I_1$ contains both $p$ and $q$. 
Then, the test $\intrk(M_{\Fcal})(I\cup\{q\})> (c=1)$ in Step 2.2(ii) fails and
the search proceeds with other points. Again, after several iterations of successful
and unsuccessful attempts to expand $I$, we obtain the interval $I_2$ with $\mu_{I_2}=1$. After obtaining $(I_2,\mu_2)$, only $p=(2,2)$ has $d(p)=1$ because
of which another round of while loop starting at $p$ outputs the interval $I_3=\{p\}$.
\end{proof}

\begin{example}[{\sc Interval} with non-interval-decomposable input]\label{ex:intervals2}Suppose that $N:=M_{\Fcal}\cong N'\oplus \I_{I_2}$ as depicted in Figure \ref{fig:example_algorithm1} (B). $N'$ is an indecomposable module that is not an interval module. One possible final output of {\sc Interval} is $\{(J_1,1),(J_2,1),(J_3,1)\}$ as depicted in Figure \ref{fig:example_algorithm2} (B). Note however that, depending on the choices of $p$ in Step 2 and the neighbors $q$ in Step 2.2, the final outcome can be different.
\end{example}

\begin{proof}[Details for Example \ref{ex:intervals2}]
In Step 1 of {\sc Interval}, for $p=(2,2)$, we have $d(p)=3$ and ${\tt list}(p)=\emptyset$. In Step 2.2, $q=(1,2)$ belongs to $\intnbd(\{p\})$ and thus $q$ can be added to $I=\{p\}$. Once $I$ becomes $\{p,q\}$, after multiple iterations within Step 2.2, $I$ will expand to $J_1$ in Figure \ref{fig:example_algorithm2} (B). Therefore, the interval $J_1$ and $\mu_{J_1}=1$ will be a part of the output. By continuing this process, one possible final output is $\{(J_1,1),(J_2,1),(J_3,1)\}$ as depicted in Figure \ref{fig:example_algorithm2}.
\end{proof}

\begin{figure}[h]
    \centering
    \includegraphics[width=0.9\textwidth]{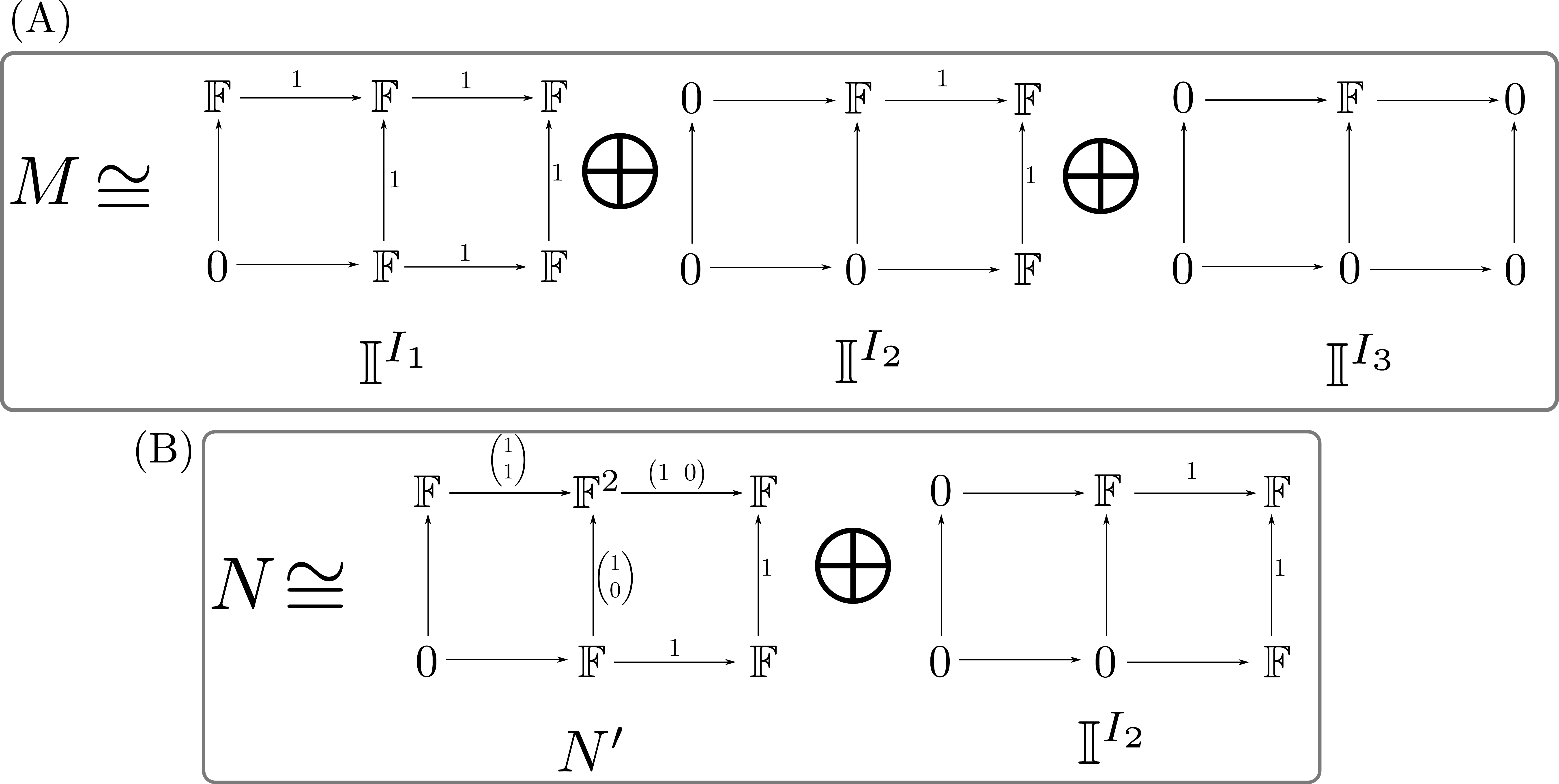}
    \caption{Modules $M,N:\{1,2,3\}\times\{1,2\}\rightarrow \vect$. $M$ is interval decomposable, but $N$ is not.}
    \label{fig:example_algorithm1}
\end{figure}

\begin{figure}
    \centering
    \includegraphics[width=0.6\textwidth]{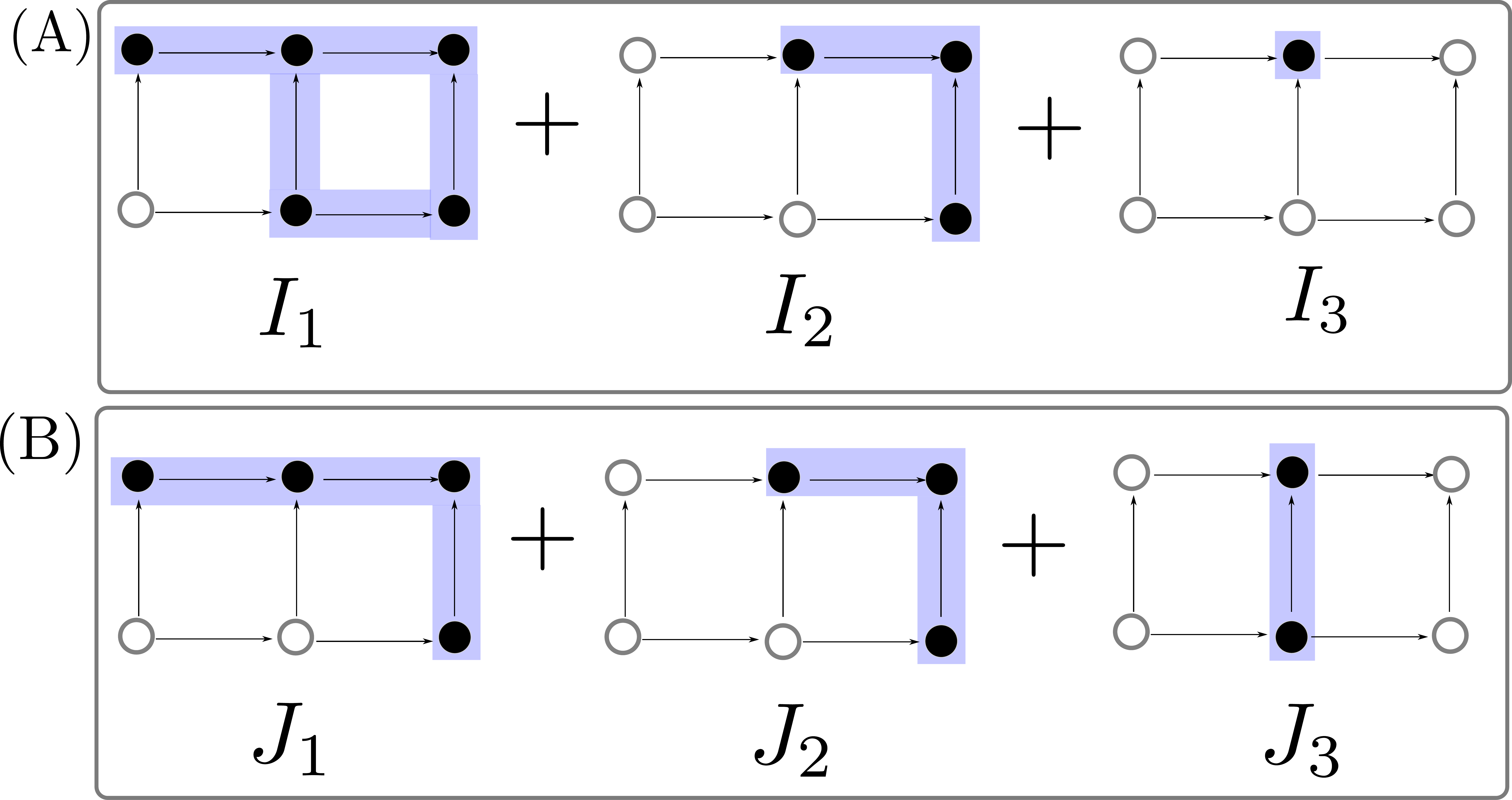}
    \caption{An illustration for Examples \ref{ex:intervals} and \ref{ex:intervals2}} 
    \label{fig:example_algorithm2}
\end{figure}

\begin{proposition}\label{prop:two algorithm coincides}
If $M_{\mathcal F}$ is interval decomposable,
{\sc Interval}$(\mathcal F$, $\Pb)$ computes an interval in $\barc(M_{\Fcal})$ if and only if {\sc TrueInterval}$(\mathcal F$, $\Pb)$ computes it with the same multiplicity.
\end{proposition}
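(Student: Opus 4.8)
The plan is to show that both algorithms produce the same sequence of (interval, multiplicity) pairs by an induction on the number of output steps, exploiting the fact that {\sc Interval} is designed precisely to simulate {\sc TrueInterval} without performing explicit quotients. The key conceptual point is that the bookkeeping data $d(\cdot)$ and ${\tt list}(\cdot)$ in {\sc Interval} faithfully encode the state of the module $M'$ that {\sc TrueInterval} would currently be holding: at any moment in which {\sc TrueInterval} has peeled off intervals $I_1,\ldots,I_j$ with multiplicities $\mu_{I_1},\ldots,\mu_{I_j}$ (so the current module is $M'=M_{\Fcal}/N$ with $N\cong\bigoplus_{s\le j}\I_{I_s}^{\mu_{I_s}}$), the value $d(p)$ maintained by {\sc Interval} equals $\dim M'_p = \dim (M_{\Fcal})_p - \sum_{s\le j,\, p\in I_s}\mu_{I_s}$, and ${\tt list}(p)$ equals $\{\mathrm{id}(I_s): s\le j,\, p\in I_s\}$, so that {\sc Count}$({\tt list}(p))=\dim(M_{\Fcal})_p - d(p)$.

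First I would set up the induction hypothesis as the conjunction of the two displayed invariants above, together with the claim that the first $j$ outputs of the two algorithms coincide. For the base case $j=0$, Step 1 of {\sc Interval} initializes $d(p)=\dim(M_{\Fcal})_p$ and ${\tt list}(p)=\emptyset$, matching the initial module $M'=M_{\Fcal}$ of {\sc TrueInterval}. For the inductive step, assume the invariants hold after $j$ outputs. Both algorithms then pick a point $p$ with nonzero residual dimension; by the $d(p)=\dim M'_p$ invariant the set of admissible starting points is the same, and we may assume the same choice is made. The crux is the maximal-interval search: {\sc TrueInterval} grows a maximal interval $I$ in $\mathcal{I}(M')$ containing $p$, using the test $\intrk(M')(I\cup\{q\})>0$ to decide whether to include a neighbor $q$; {\sc Interval} instead uses the test $\intrk(M_{\Fcal})(I\cup\{q\})>c$ where $c=${\sc Count}$({\tt templist})$ and ${\tt templist}$ is the intersection of the identifier lists over $I\cup\{q\}$. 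I would prove these two tests are equivalent: since $N$ restricted to $I\cup\{q\}$ is a direct sum of full interval modules over $\overline{I\cup\{q\}}$-containing intervals exactly when those intervals contain all of $I\cup\{q\}$, one gets $\intrk(M_{\Fcal})(I\cup\{q\}) = \intrk(M')(I\cup\{q\}) + c$ by additivity of the generalized rank over direct sums (each $\I_{I_s}$ with $I_s\supseteq I\cup\{q\}$ contributes $1$ to the rank over $I\cup\{q\}$, and these are exactly the identifiers in ${\tt templist}$; summ15ands not containing $I\cup\{q\}$ contribute $0$). Hence $\intrk(M_{\Fcal})(I\cup\{q\})>c \iff \intrk(M')(I\cup\{q\})>0$, so the two searches expand $I$ identically and terminate with the same maximal interval. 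Then the output multiplicity $\mu_I=\intrk(M_{\Fcal})(I)-c=\intrk(M')(I)$ agrees with {\sc TrueInterval}'s $\intrk(M')(I)$, and Step 2.4's updates $d(q)\mathrel{-}=\mu_I$, ${\tt list}(q)\cup\{\mathrm{id}(I)\}$ restore the invariants for $j+1$, since {\sc TrueInterval} replaces $M'$ by $M'/\I_I^{\mu_I}$, whose pointwise dimensions drop by $\mu_I$ exactly on $I$ (as recorded in the description of {\sc TrueInterval}).

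The main obstacle I anticipate is justifying the additivity identity $\intrk(M_{\Fcal})(J) = \intrk(M')(J) + \sum_{s:\, I_s\supseteq J}\mu_{I_s}$ carefully for an arbitrary candidate interval $J=I\cup\{q\}$ under examination. This requires two ingredients: (i) the generalized rank is additive over direct sums, i.e. $\rank(A\oplus B)=\rank(A)+\rank(B)$ on any fixed interval — which follows because $\varprojlim$, $\varinjlim$, and hence $\psi$ all commute with finite direct sums, so $\psi_{A\oplus B}=\psi_A\oplus\psi_B$ and ranks add; and (ii) the fact that $\intrk(\I_{I_s})(J)$ equals $1$ if $J\subseteq I_s$ and $0$ otherwise — which is immediate from Theorem \ref{thm:rk} applied to the restriction $\I_{I_s}|_J$, since $(\I_{I_s})|_J = \I_{J}$ when $J\subseteq I_s$ (as $J$ is connected and contained in $I_s$) and is a module supported on a proper subset of $J$, hence without a full-support summand, otherwise. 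Threading this through the evolving quotient $M'$ — and checking that $M'$ remains interval decomposable at every stage so that these rank computations behave as expected — is the delicate part, but it is supported by Proposition \ref{prop:quotienting does not change interver decomposabilty} together with Propositions \ref{prop:maxI} and \ref{prop:MaxI}, which guarantee that at each stage the maximal interval found is a genuine summand of the current module with the stated multiplicity. Once the additivity identity and the invariants are in place, the equivalence of the two algorithms' outputs — and hence of the final statement — follows by the induction.
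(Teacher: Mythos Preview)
Your proposal is correct and follows essentially the same inductive simulation argument as the paper: both prove that the test $\intrk(M_{\mathcal F})(I\cup\{q\})>c$ in {\sc Interval} is equivalent to the test $\intrk(M')(I\cup\{q\})>0$ in {\sc TrueInterval}, and that the bookkeeping $d(p)$ tracks $\dim M'_p$. Your version is somewhat more explicit than the paper's in justifying the key additivity identity $\intrk(M_{\mathcal F})(J)=\intrk(M')(J)+c$ via direct-sum commutation of $\varprojlim$, $\varinjlim$, and Theorem~\ref{thm:rk}, whereas the paper simply asserts the equivalence of the two tests and splits the argument into separate `if' and `only if' directions; but the underlying logic is the same.
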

\begin{proof}
(`if'): We induct on the list of intervals in the order they are computed
by {\sc TrueInterval}. We prove two claims by induction: (i) {\sc TrueInterval}
can be run to explore the points in $\Pb$ in the same order as {\sc Interval}
while searching for maximal intervals, (ii) if $I_i$, $i=1,\cdots, k$, are 
the intervals computed by {\sc TrueInterval} with this chosen order, 
then {\sc Interval} also outputs these intervals with the same multiplicities. 
Clearly, for $i=1$, {\sc Interval} computes the maximal
interval on the same input module $M_{\mathcal F}$ as {\sc TrueInterval} does.
So, clearly, {\sc TrueInterval} can be made to explore $\Pb$ as {\sc Interval}
does and hence their outputs are the same. 
Assume inductively that the hypotheses
hold for $i\geq 1$. Then, {\sc TrueInterval} 
operates next on the module $M_{i+1}:=M_{\mathcal F}/(\I_{I_1}^{\mu_{I_1}}\oplus\cdots\oplus \I_{I_i}^{\mu_{I_i}})$ (here each $\I_{I_i}^{\mu_{I_i}}$ stands for a summand of $M_{\Fcal}$ that is isomorphic to $\I_{I_i}^{\mu_{I_i}}$ by Proposition \ref{prop:MaxI}). We let {\sc TrueInterval} explore $\Pb$ in the same way
as {\sc Interval} does. This is always possible because the outcome of the
test for exploration remains the same in both cases as we argue.
The variable $d(p)$ at this point has the value $\dim (M_{i+1})_p$ and thus
both {\sc TrueInterval} and {\sc Interval} can start exploring from the
point $p$ if $d(p)>0$. So, we 
let {\sc TrueInterval} compute the next maximal interval $I_{i+1}$ starting from the point $p$ 
if {\sc Interval} starts from $p$.

Now, when {\sc Interval} tests for a point $q$ to expand the
interval $I$, we claim that the result would be the same if {\sc TrueInterval}
tested for $q$.
First of all, the condition whether $I\cup\{q\}$ is an interval or not does not
depend on which algorithm we are executing. Second, 
the list supplied to {\sc Count} in Step 2.2 (i) exactly
equals the list of intervals containing $I\cup\{q\}$
that {\sc Interval} has already output.
By the inductive hypothesis, this list
is exactly equal to the list of intervals that {\sc TrueInterval} had already `peeled off'. Therefore, the test
$\intrk(M_{\mathcal F})(I\cup\{q\})>c$ that {\sc Interval} performs in Step 2.2 (ii) is exactly
the same as the test $\intrk(M_{i+1})(I\cup\{q\})>0$ that {\sc TrueInterval} would
have performed for the module $M_{i+1}$. This establishes that
{\sc Interval} computes the same interval $I_{i+1}$ with the same multiplicity
as {\sc TrueInterval} would have
computed on $M_{i+1}$ using the same order of exploration as the inductive hypothesis claims.

(`only if'): We already know that {\sc Interval} computes all intervals
that {\sc TrueInterval} computes. We claim that it does not compute any other
interval. For intervals computed by {\sc TrueInterval}, one has that
$\dim (M_{\mathcal F})_p=\sum_i(\I_i^{\mu_{I_i}})_p$, or equivalently
$\dim (M_{\mathcal F})_p=\sum_{I\in\{I_1,\ldots,I_k\} s.t. I\ni p} \mu_I$. The algorithm
{\sc Interval} decreases the variable $d(p)$ exactly by the amount
on the right-hand side of the equation and is intialized to $\dim(M_{\mathcal F})_p$. Therefore,
every $d(p)$ becomes equal to $\dim(M_{\mathcal F})_p$ after {\sc Interval} computes the intervals that
{\sc TrueInterval} computes. The condition $d(p)>0$ in the
{\it while} loop prohibits {\sc Interval} to compute any other interval.
\end{proof}

\begin{proposition}\label{prop:Interval time complexity}
{\sc Interval}$(\mathcal F$, $\Pb)$ runs in $O(\NN^{\omega +2})$ time.
\label{prop:timeInterval}
\end{proposition}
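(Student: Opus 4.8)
The plan is to bound the cost of {\sc Interval} by accounting separately for (a) the one-time call to {\sc Dim}, (b) the total number of interval-expansion attempts made across all executions of Step 2.2, and (c) the cost of each such attempt, which is dominated by a single zigzag barcode computation on a path of length $O(\NN)$. First I would recall that {\sc Dim}$(\mathcal F,\Pb)$ runs in $O(\NN^3)$ time by Proposition~\ref{prop:dimension}, so this term is absorbed. Next, I would observe that each successful expansion of the working interval $I$ (i.e.\ each time Step 2.2(ii) succeeds and marks a new point $q$) increases $|I|$ by one, and $|I|\leq |\Pb|=O(\NN)$; each unsuccessful attempt marks a point $q\in\intnbd(I)$ as considered without re-adding it, and since $|\intnbd(I)|=O(\NN)$ there are at most $O(\NN)$ attempts per iteration of the \emph{while} loop of Step 2. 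So a single iteration of the outer \emph{while} loop performs $O(\NN)$ expansion tests. Then I would bound the number of iterations of the outer \emph{while} loop: each iteration outputs an interval $I$ with multiplicity $\mu_I\geq 1$ and decrements $d(q)$ by $\mu_I\geq 1$ for every $q\in I$, so the potential $\sum_{p\in\Pb} d(p)$, initialized to $\sum_p \dim (M_{\mathcal F})_p = O(\NN)$ (since the total dimension of a module induced by a bifiltration with $O(\NN)$ simplices is $O(\NN)$), strictly decreases by at least $1$ each iteration; hence there are $O(\NN)$ iterations of the outer loop.

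Combining, the total number of expansion tests performed across the whole algorithm is $O(\NN)\cdot O(\NN)=O(\NN^2)$. The dominant cost of each such test is the evaluation of $\intrk(M_{\mathcal F})(I\cup\{q\})$, which by Theorem~\ref{thm:rank is the multiplicity of full intervals} equals the multiplicity of the full interval in $\barc\big((M_{\mathcal F})_{\partial(\overline{I\cup\{q\}})}\big)$; this is computed by completing the boundary cap $\partial(\overline{I\cup\{q\}})$ to a faithful path and running a zigzag persistence algorithm on the restriction of $\mathcal F$ to that path. Since the path lies in $\Pb$, it has $O(\NN)$ vertices and the restricted filtration has $O(\NN)$ simplices, so by the algorithm of Milosavljevi\'c et al.~\cite{milosavljevic2011zigzag} this takes $O(\NN^\omega)$ time. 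The bookkeeping operations in each test --- forming {\tt templist} as an intersection of two identifier lists and calling {\sc Count} --- cost $O(\NN)$ since each list has $O(\NN)$ entries and the number of distinct intervals output so far is $O(\NN)$; similarly Step 2.3 and Step 2.4 cost $O(\NN)$ per outer iteration. All of these are dominated by the $O(\NN^\omega)$ zigzag computation. Therefore the overall running time is $O(\NN^3) + O(\NN^2)\cdot O(\NN^\omega) = O(\NN^{\omega+2})$, using $\omega\geq 2$ so that $\NN^3 = O(\NN^{\omega+2})$.

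The main obstacle I anticipate is making the counting of expansion tests airtight: one must be careful that Step 2.2 is phrased so that a point $q$ that fails the test is not retried within the same \emph{while}-loop iteration (this is what the ``unmarked'' bookkeeping ensures, but it needs to be stated that failed candidates are effectively excluded from further consideration in that iteration, or else the per-iteration bound is not $O(\NN)$), and that expanding $I$ only enlarges $\intnbd(I)$ in a controlled way so that the cumulative number of candidate points examined in one iteration is still $O(\NN)$ rather than $O(\NN^2)$. A clean way to handle this is to argue that across one outer iteration, each point of $\Pb$ is examined as a candidate $q$ at most a bounded number of times (at most once while it is in $\intnbd(I)$ for the current $I$, with re-examination only possible after $I$ grows), giving an $O(\NN)$ bound on candidate examinations per iteration up to constants, hence $O(\NN^2)$ tests overall; alternatively one may simply observe that there are at most $O(\NN)$ successful expansions per iteration and, interleaved with them, at most $O(\NN)$ failed tests against the final neighbour set, which again yields $O(\NN)$ tests per iteration. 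Either way the conclusion $O(\NN^{\omega+2})$ follows, and I would present whichever of these bookkeeping arguments is cleanest given the exact wording of {\sc Interval}.
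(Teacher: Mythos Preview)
Your argument has a genuine gap in the bound on the number of outer \emph{while} iterations. You claim that the potential $\sum_{p\in\Pb} d(p)=\sum_{p}\dim(M_{\mathcal F})_p$ is $O(\NN)$, but this is false in general: with $O(\NN)$ grid points and $\dim(M_{\mathcal F})_p\leq \NN$ at each $p$ (and both bounds achievable simultaneously, e.g.\ when many independent cycles persist over the whole grid), the correct bound is $\sum_p\dim(M_{\mathcal F})_p=O(\NN^2)$, exactly as the paper states in the proof of Proposition~\ref{prop:Interval time complexity}. With your accounting---$(\text{outer iterations})\times(\text{tests per iteration})$---this yields $O(\NN^2)\cdot O(\NN)=O(\NN^3)$ expansion tests and hence only $O(\NN^{\omega+3})$ overall, which misses the target by a factor of $\NN$.

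The paper closes this gap by accounting \emph{per point} across \emph{all} iterations rather than multiplying a per-iteration bound by the number of iterations. For a fixed $q\in\Pb$: (i) the number of \emph{successful} tests at $q$ over the whole run is at most the number of output intervals containing $q$, which is $\leq\dim(M_{\mathcal F})_q\leq\NN$ since each such interval decreases $d(q)$ by at least $1$; (ii) the number of \emph{unsuccessful} tests at $q$ is bounded by the number of output intervals having $q$ in their neighbourhood, and since $q$ has at most four grid neighbours each contained in at most $\NN$ output intervals, this is $\leq 4\NN$. Summing over the $O(\NN)$ points gives $O(\NN^2)$ tests in total, and then your $O(\NN^\omega)$-per-test zigzag cost yields the desired $O(\NN^{\omega+2})$. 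Your analysis of {\sc Dim}, of the zigzag cost per test, and of the list bookkeeping is fine; what you need to replace is the iteration-count argument by this amortised per-point count.
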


\begin{proof}
Each iteration in the {\it while} loop executes a traversal of the graph underlying
the poset $\Pb$ starting from a point $p$. Each time, we reach a new point $q$ in this
traversal, we execute a zigzag persistence computation on the boundary cap $\partial (I\cup \{q\})$.
This means, the number of times a zigzag persistence is computed equals the number of times
a point in the poset is considered by the {\it while} loop. We claim that this number is $O(\NN^2)$.
Each time a point $p$ is considered by the {\it while} loop, either we include it in an interval
that is output (a successful attempt), or we don't include it in the expansion of the current
interval (unsuccessful attempt) and $q$ appears as a point in the neighborhood of an interval that is output.
The number of times a point is involved in a successful attempt is at most $\NN$ because a point can be contained in at most $\NN$ intervals ($\dim (M_{\mathcal F})_p\leq \NN$). Similarly, the number of times a point is involved in an unsuccessful attempt is at most $4\NN$ because the point
can be in the neighborhood of at most $4\NN$ intervals (at most $\NN$ intervals for each of its $\leq 4$ neighbors).
Therefore, each point participates in at most $O(\NN)$ computations of zigzag persistence 
over the entire {\it while} loop. Each zigzag computation takes time $O(\NN^{\omega})$ since the 
filtration ${\mathcal F}|_{\partial (I\cup \{q\})}$ restricted to the boundary cap has  length at most $\NN$ comprising at most $t$ simplices. 
It follows that the total cost due to zigzag persistence computation
is bounded by $O(\NN^{\omega+2})$.

Now, we analyze the cost of maintaining the lists with each point and 
with the intervals under construction.
Notice that $\dim (M_{\mathcal F})=\sum_{p\in \Pb} \dim (M_{\mathcal F})_p=O(\NN^2)$ because there are at most $t$ points in
$\Pb$ with $\dim (M_{\mathcal F})_p\leq \NN$ for each $p\in \Pb$ since $\mathcal F$ has at most $\NN$ simplices. Each {\it while} loop iteration maintains a global list, calls {\sc Count} on this list,
and updates ${\tt list}(q)$ for some points $q\in \Pb$. 
The cost of this counting and updates cannot be more than the order of the final total size $\sum_p {\tt list}(p)$ of the lists, which in turn
is no more than $\dim (M_{\mathcal F})=O(\NN^2)$. Thus, over the entire {\it while} loop
we incur $O(\NN^4)$ cost for maintenance of the lists and for the counting based on them. 
Thus, we have a worst-case complexity of $O(\NN^4+\NN^{\omega+2})=O(\NN^{\omega+2})$ because it
is known that $\omega\geq 2$.
\end{proof}

\subsection{Interval decomposability}\label{sec:Interval decomposability}
Setup \ref{setup} still applies in \S\ref{sec:Interval decomposability}. We consider the following problem.
\begin{problem}\label{prob:interval decomposability}
Determine whether the module $M_{\mathcal F}$ is interval decomposable or not.
\label{prob:decomposability}
\end{problem}

If the input module $M_{\mathcal F}$ is interval decomposable, then the algorithm {\sc Interval} computes all intervals in the barcode. However, if the module $M_{\mathcal F}$ is not interval decomposable, then the algorithm is not guaranteed to output all interval summands.
We show that {\sc Interval} still can be used to solve Problem~\ref{prob:decomposability}.
For this we test whether each of the output intervals $I$ with
multiplicity $\mu_I$ indeed supports a summand $N\cong \I_I^{\mu_I}$ of $M_{\mathcal F}$. 

To do this we run  Algorithm 3 in Asashiba et al.~\cite{asashiba2018interval} for each of the output intervals of {\sc Interval}. Call this algorithm {\sc TestInterval} which with
an input interval $I$, returns $\mu_I>0$ if the module $\I_I^{\mu_I}$ is a summand
of $M$ and $0$ otherwise. 

For each of the intervals $I$ with multiplicity $\mu_I$ returned by {\sc Interval}($\mathcal F$, $\Pb$)
we test whether {\sc TestInterval}($I$) returns a non-zero $\mu_I$.
The first time the test fails, we declare that $M_{\mathcal F}$ is not interval decomposable. 
This gives us a polynomial time algorithm (with complexity $O(\NN^{3\omega+2})$)
to test whether a module induced by a given bifiltration is interval decomposable or not.
It is a substantial improvement over the result of Asashiba et al.~\cite{asashiba2018interval} who gave an algorithm for tackling the same problem. Their algorithm cleverly enumerates  the 
intervals in the poset to test, but still tests exponentially many of them and hence may run in time that is exponential in $\NN$. Because of our algorithm {\sc Interval},
we can do the same test but only on polynomially many intervals. \\

\noindent
{\bf Algorithm} {\sc IsIntervalDecomp} ($\mathcal F$, $\Pb$)\label{algorithm:isintervaldecomp}
\begin{itemize}
    \item Step 1. ${\mathcal I=\{(I_i,\mu_{I_i})\}}\leftarrow$ {\sc Interval}($\mathcal F$, $\Pb$)
    \item Step 2. For every $I_i\in {\mathcal I}$ do
    \begin{itemize}
        \item Step 2.1 $\mu\leftarrow$ {\sc TestInterval}($M_{\mathcal F}$,$I_i$) 
        \item Step 2.2 If $\mu\not = \mu_{I_i}$ then output false; quit
    \end{itemize}
    \item Step 3. output true
\end{itemize}
\begin{proposition}\label{prop:complexity for testing interval decomposability}
{\sc IsIntervalDecomp}$(\mathcal F$, $\Pb)$ returns true if and only if $M_{\mathcal F}$ is interval decomposable. It takes $O(\NN^{3\omega+2})$ time.
\end{proposition}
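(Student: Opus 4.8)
The plan is to prove the correctness statement first, then tally the running time. For correctness, I would argue both directions. If $M_{\mathcal F}$ is interval decomposable, then by Proposition \ref{prop:two algorithm coincides} the output of {\sc Interval} coincides with that of {\sc TrueInterval}, and by Proposition \ref{prop:intervals} the pairs $(I_i,\mu_{I_i})$ returned are exactly the intervals of $\barc(M_{\mathcal F})$ with their multiplicities; in particular each $\I_{I_i}^{\mu_{I_i}}$ is genuinely a summand of $M_{\mathcal F}$, so {\sc TestInterval}$(M_{\mathcal F},I_i)$ returns exactly $\mu_{I_i}$ for every $i$, no test in Step 2.2 fails, and the algorithm outputs true. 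Conversely, suppose {\sc IsIntervalDecomp} outputs true. Then for every $(I_i,\mu_{I_i})$ in the output $\mathcal I$, {\sc TestInterval} certifies that $M_{\mathcal F}$ admits a summand isomorphic to $\I_{I_i}^{\mu_{I_i}}$. The key remaining point is that these summands collectively exhaust $M_{\mathcal F}$: by the Remark following the description of {\sc Interval}, for each $p\in\Pb$ we have $\dim (M_{\mathcal F})_p=\sum_{I_i\ni p}\mu_{I_i}$. Using this dimension count together with the fact that the $\I_{I_i}^{\mu_{I_i}}$ are summands, one peels them off one at a time (invoking Proposition \ref{prop:quotienting does not change interver decomposabilty} to stay in a well-behaved setting) and concludes that after removing all of them the residual module is pointwise zero, hence trivial; therefore $M_{\mathcal F}\cong\bigoplus_i \I_{I_i}^{\mu_{I_i}}$ is interval decomposable.

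The main obstacle in the `only if' direction is justifying that the certified summands can actually be split off \emph{simultaneously} rather than merely individually: {\sc TestInterval} tells us each $\I_{I_i}^{\mu_{I_i}}$ is a summand of $M_{\mathcal F}$ in isolation, but a priori two such summands need not be complementary inside a common decomposition. The clean way around this is an inductive peeling argument: after splitting off $\I_{I_1}^{\mu_{I_1}}$, the quotient $M_{\mathcal F}/\I_{I_1}^{\mu_{I_1}}$ has pointwise dimension $\dim(M_{\mathcal F})_p-\mu_{I_1}\cdot[\,p\in I_1\,]$, and one argues that $\I_{I_2}^{\mu_{I_2}}$ remains a summand of this quotient (its total dimension has strictly decreased, so the process terminates). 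One should note that this quotient is again a module over $\Pb\in\Int(\Z^2)$ but need not be induced by a bifiltration; however {\sc TestInterval} and the structural propositions used here apply to arbitrary finite $\Pb$-modules, so this causes no difficulty. Iterating until all of $\mathcal I$ is exhausted and invoking the dimension identity $\dim(M_{\mathcal F})_p=\sum_{I_i\ni p}\mu_{I_i}$ forces the final residual module to vanish.

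For the time complexity, I would add the cost of Step 1 and Step 2. Step 1 is a single call to {\sc Interval}, which runs in $O(\NN^{\omega+2})$ time by Proposition \ref{prop:Interval time complexity}, and it outputs at most $O(\NN^2)$ pairs $(I_i,\mu_{I_i})$ since $\sum_p\dim(M_{\mathcal F})_p=O(\NN^2)$ bounds the total multiplicity. In Step 2 we invoke {\sc TestInterval} (Algorithm 3 of \cite{asashiba2018interval}) once per output interval. I would cite the complexity of that algorithm — $O(\NN^{3\omega})$ per call (arising from the linear-algebra over a system of size $O(\NN)$ with the cubic-in-matrix-multiplication blowup characteristic of that routine) — so that $O(\NN^2)$ calls cost $O(\NN^{3\omega+2})$ in total. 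Since $3\omega+2\ge \omega+2$ for $\omega\ge 2$, the overall running time is $O(\NN^{3\omega+2})$, as claimed. The only point that needs a sentence of care is the bound on the number and on the per-call cost of {\sc TestInterval}; both are quoted from \cite{asashiba2018interval}, and the $O(\NN^2)$ bound on $|\mathcal I|$ follows from the remark that $\dim(M_{\mathcal F})_p=\sum_{I_i\ni p}\mu_{I_i}\le\NN$ for each of the $\le\NN$ points $p$.
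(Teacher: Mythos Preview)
Your time-complexity accounting and the ``if $M_{\mathcal F}$ is interval decomposable then the algorithm returns true'' direction match the paper's proof essentially verbatim. For the converse, the paper argues in one sentence: if every $\I_{I_i}^{\mu_{I_i}}$ is certified as a summand, then $M_{\mathcal F}\cong \bigoplus_i \I_{I_i}^{\mu_{I_i}}$ because $\dim(M_{\mathcal F})_p=\sum_i\dim(\I_{I_i}^{\mu_{I_i}})_p$ for every $p\in\Pb$. You rightly observe that this hides a nontrivial step --- individual summands need not a priori assemble into a simultaneous direct-sum complement --- and you attempt to fill the gap with an inductive peeling argument.

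That attempt has a genuine problem. Your invocation of Proposition~\ref{prop:quotienting does not change interver decomposabilty} is circular: that proposition \emph{assumes} the ambient module is interval decomposable, which is exactly the conclusion you are after, so it cannot be used ``to stay in a well-behaved setting'' here. Moreover, the crucial assertion that ``$\I_{I_2}^{\mu_{I_2}}$ remains a summand of the quotient $M_{\mathcal F}/\I_{I_1}^{\mu_{I_1}}$'' is left unjustified, and justifying it \emph{is} the entire content of the step you are trying to supply. The clean way to close the gap --- and what the paper is implicitly relying on --- is a direct appeal to the Azumaya--Krull--Remak--Schmidt theorem already invoked throughout the paper: each $\I_{I_i}$ is indecomposable, so if $\I_{I_i}^{\mu_{I_i}}$ is a summand of $M_{\mathcal F}$ then $\I_{I_i}$ occurs with multiplicity at least $\mu_{I_i}$ in the essentially unique indecomposable decomposition of $M_{\mathcal F}$; since the intervals $I_i$ output by {\sc Interval} are pairwise distinct these contributions do not overlap, hence $\bigoplus_i \I_{I_i}^{\mu_{I_i}}$ is itself a summand, and the pointwise dimension identity forces it to be all of $M_{\mathcal F}$. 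No iterative peeling and no Proposition~\ref{prop:quotienting does not change interver decomposabilty} are needed.
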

\begin{proof}
By the contrapositive of Proposition~\ref{prop:intervals}, 
if for any of the computed interval(s) $I_i$, $i=1,\cdots,k$ by {\sc Interval}, $\I_{I_i}^{\mu_{I_i}}$
is not a summand of $M_{\mathcal F}$, then $M_{\mathcal F}$ is not interval decomposable. On the other hand, if 
every such interval module is a summand of $M_{\mathcal F}$, then we have that $M_{\mathcal F}\cong \bigoplus_{i=1}^k \I_{I_i}^{\mu_{I_i}}$ because $\dim (M_{\Fcal})_p= \sum_i^k \dim (\I_{I_i}^{\mu_{I_i}})_p$ for every $p\in \Pb$.

\paragraph{Time complexity:} By Proposition~\ref{prop:timeInterval}, Step 1 runs in time $O(\NN^{\omega+2})$.
We claim that $\dim(M_{\Fcal})=O(\NN^2)$ (see Proof of Proposition~\ref{prop:Interval time complexity}).
Therefore, {\sc Interval} returns at most $O(\NN^2)$ intervals.
According to the analysis in Asashiba et al.~\cite{asashiba2018interval},
each test in Step 2.1 takes $O(((\dim M_{\Fcal})^\omega + \NN)\NN^{\omega})=O(\NN^{3\omega})$ time and thus $O(\NN^{3\omega+2})$ in total over all $O(t^2)$ tests which
dominates the time complexity of {\sc IsIntervalDecomp}.
\end{proof}

\subsection{{\sc Interval} produces partial sections of indecomposable summands}

The algorithm {\sc Interval} produces all intervals of an input interval decomposable module. A natural question is what does the algorithm {\sc Interval} return on a module that is not interval decomposable (Figure~\ref{fig:example_algorithm2} (B)). We show that the intervals returned by the algorithm support ``partial" sections of indecomposable summands:

\begin{proposition}
Let $\mathcal I$ be the set of intervals computed by {\sc Interval}($\mathcal F$,$P$).
Then, for every $I\in {\mathcal I}$, there exists a section supported over $I$ of an indecomposable summand of $M_{\mathcal{F}}$. Moreover, for every $p\in P$, $\dim (M_{\mathcal F})_p=\sum_{I\in {\mathcal I}} \dim(\mathbb{I}_{I}^{\mu_I})_p$ if $I$ is computed with multiplicity $\mu_I>0$.
\end{proposition}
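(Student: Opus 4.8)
The plan is to track, for each interval $I\in\mathcal I$ that {\sc Interval} outputs, the computation that produced it and turn it into an actual section of an indecomposable summand. Recall that when {\sc Interval} outputs $I$ with multiplicity $\mu_I$, the stopping condition guarantees $\intrk(M_{\mathcal F})(I)>c$, where $c$ is the count of previously-output intervals containing $I$; the declared multiplicity is $\mu_I=\intrk(M_{\mathcal F})(I)-c>0$. In particular $\intrk(M_{\mathcal F})(I)>0$, so by Theorem~\ref{thm:rank is the multiplicity of full intervals} (applied to $M_{\mathcal F}|_I$) the zigzag module $(M_{\mathcal F}|_I)_{\partial I}$ contains at least one full bar; equivalently, $\psi_{M_{\mathcal F}|_I}:\varprojlim M_{\mathcal F}|_I\to\varinjlim M_{\mathcal F}|_I$ is nonzero. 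Pick a section $\bv\in\varprojlim M_{\mathcal F}|_I$ with $\psi_{M_{\mathcal F}|_I}(\bv)\neq 0$; this $\bv$ is a nonzero tuple $(\bv_p)_{p\in I}$ compatible with all structure maps of $M_{\mathcal F}$ restricted to $I$, i.e.\ a section of $M_{\mathcal F}$ supported over $I$ (in the sense that it lives in $\bigoplus_{p\in I}(M_{\mathcal F})_p$ and is a section of $M_{\mathcal F}|_I$).

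Next I would decompose $M_{\mathcal F}\cong\bigoplus_{j}N_j$ into indecomposables (Azumaya–Krull–Remak–Schmidt) and push $\bv$ through this decomposition: write $\bv=\sum_j\bv^{(j)}$ where $\bv^{(j)}$ is the component of $\bv$ in $\bigoplus_{p\in I}(N_j)_p$. Each $\bv^{(j)}$ is again a section of $N_j|_I$ because the projections $M_{\mathcal F}\twoheadrightarrow N_j$ are morphisms of modules and hence commute with the $\sim$-relations defining $\varprojlim$. The limit-to-colimit map is natural, so $\psi_{M_{\mathcal F}|_I}=\bigoplus_j\psi_{N_j|_I}$ under the induced decomposition of $\varprojlim$ and $\varinjlim$ over $I$; since $\psi_{M_{\mathcal F}|_I}(\bv)\neq 0$, there is at least one index $j_0$ with $\psi_{N_{j_0}|_I}(\bv^{(j_0)})\neq 0$. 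In particular $\bv^{(j_0)}$ is a \emph{nonzero} section supported over $I$ of the indecomposable summand $N_{j_0}$ of $M_{\mathcal F}$, which is exactly the first claim.

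For the ``moreover'' part, this is just the dimension-bookkeeping already established for {\sc Interval}: the algorithm initializes $d(p)=\dim(M_{\mathcal F})_p$ and the $\mathit{while}$-loop condition $d(p)>0$ forces it to keep outputting intervals through $p$ until $d(p)$ reaches $0$; each output of $(I,\mu_I)$ with $p\in I$ decrements $d(p)$ by exactly $\mu_I$. Hence at termination $\sum_{I\in\mathcal I,\,I\ni p}\mu_I=\dim(M_{\mathcal F})_p$, i.e.\ $\dim(M_{\mathcal F})_p=\sum_{I\in\mathcal I}\dim(\I_I^{\mu_I})_p$ for every $p\in P$. (This is exactly the content of the Remark following the statement of {\sc Interval}, so I would simply cite it.)

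The main obstacle is the first paragraph's transition: one must be careful that a section realizing a full bar of the \emph{zigzag} $(M_{\mathcal F}|_I)_{\partial I}$ genuinely extends to a section of $M_{\mathcal F}|_I$ over all of $I$ — this is precisely what Proposition~\ref{prop:section extension} / Lemma~\ref{lem:minimal-extent} and the section-extension map $e$ in~(\ref{eq:section extension}) provide, since $\min_{\ZZ}(I)$ and $\max_{\ZZ}(I)$ are lower and upper fences of $I$. A secondary subtlety is making precise what ``section supported over $I$'' means for a summand $N_{j_0}$ of a module defined on all of $P$ rather than on $I$; the cleanest fix is to read it as a nonzero element of $\varprojlim(N_{j_0}|_I)$ on which $\psi_{N_{j_0}|_I}$ does not vanish, which is what the argument above delivers and which matches the usage elsewhere in the paper (e.g.\ Definition~\ref{def:section along a path} and Setup~\ref{setup:sections}).
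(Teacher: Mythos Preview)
Your proposal is correct and follows essentially the same strategy as the paper: both arguments note that every interval $I$ output by {\sc Interval} satisfies $\intrk(M_{\mathcal F})(I)>0$, decompose $M_{\mathcal F}$ into indecomposables, and use additivity of limits and colimits over direct sums to locate the desired section in one summand. The paper packages this as a separate Proposition (\ref{prop:rank and section}) with a two-level decomposition --- first $M_{\mathcal F}\cong\bigoplus_j M_j$, then each $M_j|_I\cong\bigoplus_k M_{jk}$ --- and then invokes Theorem~\ref{thm:rk} to count exactly which $M_{jk}$ are isomorphic to $\I_I$; this yields the slightly stronger conclusion that there are $c=\intrk(M_{\mathcal F})(I)$ such summands. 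Your version skips the inner decomposition and works directly with the additivity $\psi_{M_{\mathcal F}|_I}=\bigoplus_j \psi_{N_j|_I}$, which is enough for the existence claim actually asked for.

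Two small clarifications. First, the detour through Theorem~\ref{thm:rank is the multiplicity of full intervals} and the section-extension machinery is unnecessary: $\intrk(M_{\mathcal F})(I)>0$ is \emph{by definition} $\rank(\psi_{M_{\mathcal F}|_I})>0$, so you can pick $\bv\in\varprojlim M_{\mathcal F}|_I$ with $\psi(\bv)\neq 0$ directly, without passing through $\partial I$. Second, you should make explicit that your section $\bv^{(j_0)}$ is \emph{fully} supported on $I$ (i.e.\ $\bv^{(j_0)}_p\neq 0$ for every $p\in I$), which is what ``supported over $I$'' means here (cf.\ Proposition~\ref{prop:rank and section}). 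This follows immediately from $\psi_{N_{j_0}|_I}(\bv^{(j_0)})\neq 0$, since $\psi=i_p\circ\pi_p$ for every $p\in I$ and hence $\bv^{(j_0)}_p=0$ for any single $p$ would force $\psi(\bv^{(j_0)})=0$. Your treatment of the ``moreover'' clause via the Remark after the algorithm is exactly what the paper does as well.
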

The above result follows from the proposition below since {\sc Interval} outputs an interval $I$ only if $\rk(M_{\mathcal F})(I)> 0$. 

\begin{proposition}\label{prop:rank and section}
Let $P$ be a finite connected poset. Let $M$ be a $P$-module with an indecomposable decomposition $M\cong \bigoplus_{j\in L} M_j$ for some finite set $L$. Let $I\in \Int(P)$. If $\rk(M)(I)=c>0$, there are $j_1,\ldots,j_c\in L$ such that for each $t=1,\ldots, c$, there exists a section $\bv$ of $M_{j_t}|_I$ that is fully supported, i.e. $\bv_p\neq 0$ for all $p\in I$.\footnote{We remark that these sections can be further extended using the structure maps to submodules which may have larger supports than the original sections.}
\end{proposition}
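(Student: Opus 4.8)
The plan is to reduce the claim about $\rk(M)(I) = c$ to a statement about the decomposition of the restricted module $M|_I$, and then to use the rank-counts-full-intervals principle (Theorem \ref{thm:rk}) together with a projection/section-transfer argument. First I would observe that $\rk(M)(I) = \rk(M|_I)$ by Definition \ref{def:rank function}, so without loss of generality I may work with $M|_I$; but I must be careful, since an indecomposable summand $M_j$ of $M$ need not restrict to an indecomposable of $M|_I$. The right way to handle this: let $M \cong \bigoplus_{j\in L} M_j$ be the given indecomposable decomposition of the \emph{global} module $M$, and restrict to get $M|_I \cong \bigoplus_{j\in L} M_j|_I$. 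Applying Theorem \ref{thm:rk} to $M|_I$, the rank $\rk(M|_I) = c$ equals the number of indecomposable summands of $M|_I$ isomorphic to $\I_I$ (the full interval of $I$). Since the functor $\psi_{(-)}$ (limit-to-colimit map) is additive, $\rk(M|_I) = \sum_{j\in L}\rk(M_j|_I)$, so $\sum_{j\in L}\rk(M_j|_I) = c$. Hence there are (counted with multiplicity, but we only need existence of $c$ witnesses) indices $j_1,\ldots,j_c \in L$, not necessarily distinct in the naive sense but accounting for total rank $c$, with $\rk(M_{j_t}|_I) \geq 1$ for each; more precisely I would pick, for each unit of rank, one summand $M_{j_t}$ contributing it.

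Next, for a fixed $t$, I have $\rk(M_{j_t}|_I) \geq 1$, i.e. $\rk(\psi_{M_{j_t}|_I}) \geq 1$, where $\psi_{M_{j_t}|_I} : \varprojlim M_{j_t}|_I \to \varinjlim M_{j_t}|_I$ factors as $i_p \circ \pi_p$ for any $p \in I$ (Setup \ref{convention}). Since this composite has rank $\geq 1$, its image is nonzero, so there exists a section $\bv \in \varprojlim M_{j_t}|_I$ (using the explicit description in Proposition \ref{prop:computation}\ref{item:limit}) with $\psi_{M_{j_t}|_I}(\bv) = i_p(\pi_p(\bv)) \neq 0$ in $\varinjlim M_{j_t}|_I$. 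The crucial observation is then: if $\bv_p = 0$ for some $p \in I$, then $\pi_p(\bv) = 0$, hence $i_p(\pi_p(\bv)) = 0$, contradicting $\psi_{M_{j_t}|_I}(\bv)\neq 0$. Therefore $\bv_p \neq 0$ for \emph{all} $p \in I$; that is, $\bv$ is a fully supported section of $M_{j_t}|_I$, which is exactly what the proposition asserts.

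The main obstacle I anticipate is handling multiplicity correctly: if $\rk(M_{j_t}|_I) = m > 1$ for a single summand, I want to extract $m$ (linearly independent, or at least $m$ distinct) witnessing sections, and the statement as phrased indexes them by distinct "copies." I would address this by noting that $\rk(\psi_{M_{j_t}|_I}) = m$ means the image of $\psi$ is $m$-dimensional, and I can choose $\bv^{(1)},\ldots,\bv^{(m)} \in \varprojlim M_{j_t}|_I$ whose images under $\psi$ are linearly independent in $\varinjlim M_{j_t}|_I$; each such $\bv^{(s)}$ is then fully supported by the same argument as above (its image being nonzero forces every coordinate nonzero). Summing over all $j$ recovers the $c$ required sections, listing $M_{j_t}$ with the appropriate multiplicity. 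Finally, the moreover-clause $\dim(M_{\mathcal F})_p = \sum_{I\in\mathcal I}\dim(\I_I^{\mu_I})_p$ is not part of Proposition \ref{prop:rank and section} itself but of the preceding proposition; it follows from the Remark after the {\sc Interval} algorithm, which records that $\dim M_p = \sum_{I_i \ni p}\mu_i$ holds for the output of {\sc Interval} by construction (each output interval decrements $d(q)$ by $\mu_I$ for $q\in I$, and the loop terminates precisely when all $d(q)=0$). So the only real content to prove is Proposition \ref{prop:rank and section}, and the argument above — additivity of rank across a direct sum, plus the factorization $\psi = i_p\circ\pi_p$ forcing full support — completes it.
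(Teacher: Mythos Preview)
Your argument is correct and takes a genuinely different route from the paper's proof. The paper proceeds by further decomposing each restriction $M_j|_I$ into indecomposables $M_{jk}$, then applies Theorem~\ref{thm:rk} to each $M_{jk}$ to conclude that exactly $c$ of these pieces are isomorphic to $\I_I$; a fully supported section is then read off from each such interval summand and lifted back into $M_j|_I$. You instead avoid the second decomposition entirely: after establishing additivity $\rk(M|_I)=\sum_j \rk(M_j|_I)$, you work directly with the factorization $\psi_{M_j|_I}=i_p\circ\pi_p$ (valid for \emph{every} $p\in I$) to argue that any section mapping to a nonzero class in the colimit must have $\bv_p\neq 0$ at every $p$. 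This is a clean and more elementary shortcut; it sidesteps both the Krull--Schmidt refinement and the second invocation of Theorem~\ref{thm:rk}. The paper's route, on the other hand, makes the structural picture more explicit (it actually locates $\I_I$-summands inside each $M_j|_I$), which is conceptually informative even if not strictly needed for the proposition as stated. One small point: your additivity claim for $\psi_{(-)}$ relies on limits commuting with finite direct sums in $\vect$, which the paper spells out (finite-dimensionality makes products and coproducts coincide); you use it implicitly, which is fine but worth a one-line justification.
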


\begin{proof}By the assumption, we have
\[M|_I\cong \left(\bigoplus_{j\in L}M_j\right)\Big|_I\cong \bigoplus_{j\in L}M_j|_I.\]
Note that $M_j|_I$ can be decomposable for each $j$. Let $M_j|_I$ have an indecomposable decomposition \[M_j|_I\cong \bigoplus_{k\in K_j}M_{jk},\]
i.e. each $M_{jk}$ is an indecomposable $I$-module. Then $M|_I$ has an indecomposable decomposition \[M|_I\cong\bigoplus_{j\in L}\bigoplus_{k\in K_j}M_{jk}.\]
 Since $I$ is a finite poset and $M_p$ is finite dimensional for each $p\in \Pb$, we have that $\varprojlim M|_I$ is finite dimensional. Hence the notions of direct product and direct sum coincide in the category of $I$-indexed modules. This implies that $\varprojlim (N_1\oplus N_2)\cong \varprojlim N_1 \oplus \varprojlim N_2$ (and it is a standard fact that $\varinjlim (N_1\oplus N_2)\cong \varinjlim N_1 \oplus \varinjlim N_2$ \cite[Theorem V.5.1]{mac2013categories}). Therefore, we have that \[c=\rk(M)(I)=\rank(M|_I)=\sum_{j\in L}\sum_{k\in K_j}\rank\left(M_{jk}\right).\]
Since each $M_{jk}$ is indecomposable, by Theorem \ref{thm:rk}, \[\rank(M_{jk})=\begin{cases}1,&\mbox{if $M_{jk}\cong \I_{I}$} \\0,&\mbox{otherwise.}
\end{cases}
\]
Therefore, there exist exactly $c$ distinct pairs $(j,k)$ such that $M_{jk}\cong \I_I$. Hence, for each of such $(j,k)$, we can find a section $\bv:=\bv^{jk}$ of $M_{jk}$ that is fully supported over $I$. Since $M_{jk}$ is a summand of $M_j|_I$, $\bv$ is also a section of $M_j|_I$, as desired. 
\end{proof}

\subsection{Barcode ensemble from {\sc Interval}}

Fix a $P$-module $M$ as an input to the algorithm {\sc Interval}. Recall that an output of {\sc Interval} with the input $M$ is a collection $\{(I_i,\mu_{I_i})\}_i$.
This collection may change with different choices available during the exploration for computing maximal intervals. The collection $\bE(M)$  of \emph{all} possible outputs of {\sc Interval} will be called the \textbf{barcode ensemble} of $M$. Proposition \ref{prop:two algorithm coincides} implies:

\begin{corollary} If $\bE(M)$ contains more than one collection, then $M$ is not interval decomposable. 
\end{corollary}

Suppose that a collection $\Ccal:=\{(I_i,\mu_{I_i})\}_i$ belongs to $\bE(M)$. We consider the interval decomposable module  $\mathbb{I}_\Ccal:=\bigoplus_i \mathbb{I}_{I_i}^{\mu_i}$ corresponding to $\Ccal$. Fix any interval $J\subset P$. We define \[\bE(M)(J):=\max_{\Ccal \in \bE(M)} \rk(\bbI_{\Ccal})(J).\]
We remark that $\rk(\bbI_{\Ccal})(J)$ is equal to $\sum_{i} \mu_i$ where the sum is taken over all $i$ such that $I_i\supseteq J$.

A single output of the algorithm {\sc Interval} may fail to capture the rank invariant or generalized rank invariant of $M$; in Example \ref{ex:intervals2}, the rank invariant of the module $N$  is different from the rank invariant of $\bigoplus_{i=1}^3\bbI^{J_i}$. Nevertheless, the barcode ensemble $\bE(M)$ recovers the generalized rank invariant of the input module $M$:

\begin{proposition}\label{prop:rk is recovered} For every interval $J\subseteq P$, we have:
\[\rk(M)(J)=\bE(M)(J).\]
\end{proposition}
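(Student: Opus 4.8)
The plan is to prove both inequalities $\rk(M)(J) \geq \bE(M)(J)$ and $\rk(M)(J) \leq \bE(M)(J)$ separately, where $\bE(M)(J) = \max_{\Ccal \in \bE(M)} \rk(\bbI_\Ccal)(J)$ and, as noted, $\rk(\bbI_\Ccal)(J) = \sum_{I_i \supseteq J} \mu_i$ for $\Ccal = \{(I_i,\mu_{I_i})\}_i$.

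For the inequality $\rk(M)(J) \geq \bE(M)(J)$, I would fix an arbitrary output $\Ccal = \{(I_i,\mu_{I_i})\}_i \in \bE(M)$ of {\sc Interval} and show $\rk(M)(J) \geq \sum_{I_i \supseteq J}\mu_i$. The key observation is monotonicity of the generalized rank: since $J \subseteq I_i$ implies $\rk(M)(I_i) \leq \rk(M)(J)$ is false in general, I instead need the sharper fact extractable from Proposition \ref{prop:rank and section}: whenever {\sc Interval} outputs $(I_i,\mu_{I_i})$, it did so because $\rk(M)(I_i) > c_i$ where $c_i$ was the count of previously output intervals containing $I_i$; more importantly, {\sc Interval} mimics {\sc TrueInterval}-style peeling, so after outputting all intervals containing a fixed point, the $d(p)$ bookkeeping ensures $\dim M_p = \sum_{I_i \ni p}\mu_i$. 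The cleanest route is: the intervals $I_i \supseteq J$ output by {\sc Interval}, taken with multiplicities, contribute disjoint families of fully-supported sections over $J$ of indecomposable summands of $M$ (this is exactly the content of Proposition \ref{prop:rank and section} applied inductively along the peeling, together with the fact that distinct output intervals containing $J$ correspond to distinct summands because of the {\sc Count}/${\tt list}$ mechanism preventing double-counting). Since $\rk(M)(J)$ counts, via Theorem \ref{thm:rk} applied to $M|_J$, the number of indecomposable summands of $M|_J$ isomorphic to $\bbI_J$, and each such summand coming from a distinct output interval containing $J$ is distinct, we get $\rk(M)(J) \geq \sum_{I_i \supseteq J}\mu_i$. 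Taking the max over $\Ccal \in \bE(M)$ gives $\geq$.

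For the reverse inequality $\rk(M)(J) \leq \bE(M)(J)$, I would decompose $M \cong \bigoplus_{j\in L}M_j$ into indecomposables and let $c := \rk(M)(J)$; by Proposition \ref{prop:rank and section} there are distinct summands $M_{j_1},\ldots,M_{j_c}$ each admitting a fully-supported section over $J$, equivalently (by Theorem \ref{thm:rk}) each $M_{j_t}|_J$ has $\bbI_J$ as a summand. The goal is to exhibit \emph{one} particular run of {\sc Interval} — i.e., one $\Ccal \in \bE(M)$ — in which at least $c$ of the output intervals (counted with multiplicity) contain $J$. The idea is to steer the nondeterministic choices in {\sc Interval}: starting from a point $p \in J$, when searching for a maximal interval we greedily try to include every neighbor that keeps $J$ inside the current interval and keeps the rank strictly above the running count; because $\rk(M)(J) = c$ and only fewer than $c$ intervals containing $J$ have been peeled so far, the test $\rk(M)(I\cup\{q\}) > c'$ will succeed for at least one expansion keeping $J \subseteq I\cup\{q\}$, so the maximal interval produced still contains $J$ and can be output with positive multiplicity. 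Iterating until the running count of $J$-containing output intervals reaches $c$ yields the desired $\Ccal$. Hence $\bE(M)(J) \geq c = \rk(M)(J)$.

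\textbf{Main obstacle.} The delicate point is the second inequality: verifying that the greedy/steered run of {\sc Interval} can \emph{always} be continued to produce another output interval containing $J$ as long as fewer than $c$ such have been produced. This requires relating the ``$c$'' in the rank computation for $M|_J$ to the running ${\tt Count}$ value $c'$ in the algorithm — specifically arguing that the intervals already output that contain $J$ account for exactly the summands of $M|_J$ isomorphic to $\bbI_J$ that have been ``used up,'' so that $\rk(M)(J) - c' > 0$ forces the existence of a valid expansion. One must be careful that {\sc Interval} operates on the \emph{original} module $M$ (not a quotient) via the {\tt list}/{\tt Count} substitute for peeling, so the equivalence between the algorithm's test $\rk(M)(I\cup\{q\}) > c'$ and the ``virtual'' test $\rk(M/(\text{peeled}))(I\cup\{q\}) > 0$ — established in the proof of Proposition \ref{prop:two algorithm coincides} for interval-decomposable inputs — needs to be re-examined or invoked carefully in the non-decomposable case, where it only guarantees a \emph{lower bound} on available room rather than an exact match. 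I expect this bookkeeping, rather than any deep structural fact, to be the crux.
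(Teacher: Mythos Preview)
Your proposal is essentially the same as the paper's proof in its overall structure and in the $(\leq)$ direction, but you overcomplicate the $(\geq)$ direction.

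For $(\leq)$, both you and the paper steer the nondeterministic choices in {\sc Interval}: start at a point of $J$, greedily absorb the remaining points of $J$ first, and keep going; as long as fewer than $\rk(M)(J)$ intervals containing $J$ have been output, the test in Step~2.2(ii) still passes and another $J$-containing interval is produced. The paper packages this as a contradiction (assume $\rk(M)(J)>m=\bE(M)(J)$, exhibit a run beating $m$), while you phrase it directly --- the content is identical, and your ``main obstacle'' is exactly what the paper glosses over in one sentence.

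For $(\geq)$, the paper's argument is a single line: ``by the design of the algorithm {\sc Interval}, it is not possible for $\sum_{I_i\supseteq J}\mu_i$ to exceed $\rk(M)(J)$.'' That is, it appeals directly to the bookkeeping (the multiplicity assigned in Step~2.3 is $\rk(M)(I)-c$, and monotonicity $\rk(M)(I)\leq\rk(M)(J)$ together with the {\tt Count} mechanism caps the running total). Your route through Proposition~\ref{prop:rank and section} --- producing fully supported sections of indecomposable summands and then arguing they land in distinct $\bbI_J$-summands of $M|_J$ --- is an unnecessary detour: the step you flag as needing care (``distinct output intervals containing $J$ correspond to distinct summands because of the {\tt Count}/{\tt list} mechanism'') is precisely the same bookkeeping claim the paper makes directly, so the section language adds no rigor. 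You would do better to argue $(\geq)$ straight from the algorithm's invariants, as the paper does.
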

By the design of the algorithm {\sc Interval}, the above equality is clearly  true when $\rk(M)(J)=0$.
\begin{proof}
($\geq$) Let $m:=\rk(M)(J)$ and pick any $\Ccal=\{(I_i,\mu_i))\}_i$ in $\bE(M)$. By the design of the algorithm {\sc Interval}, it is not possible for the sum $\displaystyle \sum_{\substack{i \\ I_i\supset J}} \mu_i$ ($=\rk(\bbI_{\C})$) to be greater than $m$. Since $\Ccal$ was arbitrarily chosen in $\bE(M)$, we have $\rk(M)(J)\geq \bE(M)(J)$.

($\leq$) Assume that $m:=\bE(M)(J)$. This implies that there exists $\Ccal=\{(I_i,\mu_i)\}_i$ in $\bE(M)$ such that $\rk(\bbI_\C)(J)=m$. We prove the desired inequality by contradiction. Suppose that $\rk(M)(J)>m$. Then, by starting the algorithm {\sc Interval} at any point $p\in I$ (in Step 2) and inductively adding points to $\{p\}$ from $I\setminus \{p\}$ whenever possible (in Step 2.2), the algorithm will output intervals $I'_j$ containing $J$ with multiplicity $\mu'_j$ such that the sum $\sum_{j}\mu'_j$ is larger than $m$. Let $\Ccal'$ be the output obtained in this strategy. Then, we have that $\rk(\bbI_{\Ccal'})(J)>m$ which contradicts the definition of $\bE(M)(J)$.
\end{proof}

\section{Discussion}\label{sec:discussion}

Some open questions that follow are:(i) Can we generalize Theorem \ref{thm:rank is the multiplicity of full intervals} to $d$-parameter persistent homology for $d>2$? (ii) Can the complexity of the algorithms be improved? (iii) In particular, can we improve the interval decomposability testing by improving {\sc TestInterval} which currently uses an algorithm of Asashiba et al.?

\appendix

\section{Limits and Colimits}\label{sec:limits and colimits}

We recall the notions of limit and colimit from category theory \cite{mac2013categories}. Recall that a poset $\Pb$ can be viewed as a category whose objects are the elements of $\Pb$ and morphisms are the comparable pairs $p\leq q$ in $\Pb$. Although limits and colimits are defined for functors indexed by small categories we restrict our attention to poset-indexed functors. Let $\C$ be any category.  
\begin{definition}[Cone]\label{def:cone} Let $F:\Pb\rightarrow \C$ be a functor. A \emph{cone} over $F$ is a pair $\left(L,(\pi_p)_{p\in \Pb}\right)$ consisting of an object $L$ in $\C$ and a collection $(\pi_p)_{p\in \Pb}$ of morphisms $\pi_p:L \rightarrow F(p)$ that commute with the arrows in the diagram of $F$, i.e. if $p\leq q$ in $\Pb$, then $\pi_q= F(p\leq q)\circ \pi_p$ in $\C$, i.e. the diagram below commutes.
\end{definition}
\begin{equation}\label{eq:cone commutativity}
    \begin{tikzcd}F(p)\arrow{rr}{F(p\leq q)}&&F(q)\\
& L \arrow{lu}{\pi_p} \arrow{ru}[swap]{\pi_q}\end{tikzcd}
\end{equation}

In Definition \ref{def:cone}, the cone $\left(L,(\pi_p)_{p\in \Pb}\right)$ over $F$ is sometimes denoted simply by $L$, suppressing the collection $(\pi_p)_{p\in \Pb}$ of morphisms if no confusion can arise. A limit of $F:\Pb\rightarrow \C$ is a terminal object in the collection of all cones over $F$:

\begin{definition}[Limit]\label{def:limit} Let $F:\Pb\rightarrow \C$ be a functor. A \emph{limit} of $F$ is a cone over $F$, denoted by $\left(\varprojlim F,\ (\pi_p)_{p\in \Pb} \right)$ or simply $\varprojlim F$, with the following \emph{terminal} property: If there is another cone $\left(L',(\pi'_p)_{p\in \Pb} \right)$ of $F$, then there is a \emph{unique} morphism $u:L'\rightarrow \varprojlim F$ such that $\pi_p'=\pi_p\circ u$ for all $p\in \Pb$.
\end{definition}

It is possible that a functor does not have a limit at all. However, if a functor does have a limit then the terminal property of the limit guarantees its uniqueness up to isomorphism. For this reason, we  sometimes refer to a limit as \emph{the} limit of a functor.

Cocones and colimits are defined in a dual manner:
\begin{definition}[Cocone]\label{def:cocone} Let $F:\Pb\rightarrow \C$ be a functor. A \emph{cocone} over $F$ is a pair $\left(C,(i_p)_{p\in \Pb}\right)$ consisting of an object $C$ in $\C$ and a collection $(i_p)_{p\in \Pb}$ of morphisms $i_p:F(p)\rightarrow C$ that commute with the arrows in the diagram of $F$, i.e. if $p\leq q$ in $\Pb$, then $i_p= i_q\circ F(p\leq q)$ in $\C$, i.e. the diagram below commutes.
\end{definition}
\begin{equation}\label{eq:cocone commutativity}
    \begin{tikzcd}
&C\\
F(p)\arrow{ru}{i_p}\arrow{rr}[swap]{F(p\leq q)}&&F(q)\arrow{lu}[swap]{i_q}
\end{tikzcd}
\end{equation}

In Definition \ref{def:cocone}, a cocone $\left(C,(i_p)_{p\in \Pb}\right)$ over $F$ is sometimes denoted simply by $C$, suppressing the collection $(i_p)_{p\in \Pb}$ of morphisms. A colimit of $F:\Pb\rightarrow \C$ is an initial object in the collection of cocones over $F$:

\begin{definition}[Colimit]\label{def:colimit}Let $F:\Pb\rightarrow \C$ be a functor. A \emph{colimit} of $F$ is a cocone, denoted by $\left(\varinjlim F,\ (i_p)_{p\in \Pb}\right)$ or simply $\varinjlim F$, with the following \emph{initial} property: If there is another cocone $\left(C', (i'_p)_{p\in \Pb}\right)$ of $F$, then there is a \emph{unique} morphism $u:\varinjlim F\rightarrow C'$  such that $i'_p=u\circ i_p$ for all $p\in \Pb$.
\end{definition}

It is possible that a functor does not have a colimit at all. However, if a functor does have a colimit then the initial property of the colimit guarantees its uniqueness up to isomorphism. For this reason, we sometimes refer to a colimit as \emph{the} colimit of a functor.  
It is well-known that if $P$ is finite, then any functor $F:\Pb\rightarrow \vect$ admits both limit and colimit in $\vect$. Assume that $\Pb$ is also connected. Then, by the commutativity in (\ref{eq:cone commutativity}) and (\ref{eq:cocone commutativity}), once a cone and a cocone of $F$ are specified, there exists the canonical map from the cone to the cocone, leading to Definition \ref{def:rank}.

\begin{remark}\label{rem:canonical projection}Let $\Pb$ be a finite and connected poset. Let $\Q$ be a finite and connected subposet of $\Pb$. Let us fix any $F:\Pb \rightarrow \vect$.
\begin{enumerate}[label=(\roman*)]
    \item  For any cone $\left(L', (\pi_p')_{p\in \Pb}\right)$ over $F$, its restriction $\left(L', (\pi_p')_{p\in \Q}\right)$ is a cone over the restriction $F|_\Q:\Q\rightarrow \vect$. Therefore, by the terminal property of the limit  $\left(\varprojlim F|_{\Q}, (\pi_q)_{q\in \Q}\right)$, there exists the unique morphism $u:L' \rightarrow \varprojlim F|_{\Q}$ such that  $\pi_q'=\pi_q\circ u$ for all $q\in \Q$.\label{item:canonical projection 1}
    \item  For any cocone $\left(C', (i_p')_{p\in \Pb}\right)$ over $F$, its restriction $\left(C', (i_p')_{p\in \Q}\right)$ is a cocone over the restriction $F|_\Q:\Q\rightarrow \vect$. Therefore, by the initial property of $\varinjlim F|_{\Q}$, there exists the unique morphism $u:\varinjlim F|_{\Q} \rightarrow C'$ such that $i'_q=u\circ i_q$ for all $q\in \Q$.\label{item:canonical projection 2}
     \item By the previous two items, there exist linear maps $\pi:\varprojlim F\rightarrow \varprojlim F|_{\Q}$ and $\iota:\varinjlim F|_{\Q}\rightarrow \varinjlim F$ such that
     $\psi_F=\iota \circ \psi_{F|_{\Q}} \circ \pi.$\label{item:canonical projection 3}
\end{enumerate}
Therefore, $\rank(F)=\rank(\psi_{F})\leq \rank (\psi_{F|_{\Q}})=\rank(F|_{\Q})$.
\end{remark}

\bibliographystyle{plain}
\bibliography{biblio.bib}

\end{document}